\newtheorem{thm}{Theorem}[section]
\newtheorem{dfn}[thm]{Definition}
\newtheorem{df-th}[thm]{Definition-Theorem}
\newtheorem{lem}[thm]{Lemma}
\newtheorem{prop}[thm]{Proposition}
\newtheorem{ex}[thm]{Example}
\newtheorem{cor}[thm]{Corollary}
\newtheorem{cla}[thm]{Claim}
\theoremstyle{remark}
\newtheorem{rem}[thm]{Remark}
\newcommand{\rmI}{\mathrm{I}}
\newcommand{\calA}{\mathcal{A}}
\newcommand{\calC}{\mathcal{C}}
\newcommand{\calD}{\mathcal{D}}
\newcommand{\calI}{\mathcal{I}}
\newcommand{\calO}{\mathcal{O}}
\newcommand{\calP}{\mathcal{P}}
\newcommand{\calS}{\mathcal{S}}
\newcommand{\calT}{\mathcal{T}}
\newcommand{\fkm}{\mathfrak{m}}
\newcommand{\fkS}{\mathfrak{S}}
\newcommand{\NN}{\mathbb{N}}
\newcommand{\ZZ}{\mathbb{Z}}
\newcommand{\CC}{\mathbb{C}}
\newcommand{\FF}{\mathbb{F}}
\newcommand{\Spec}{\operatorname{Spec}}
\newcommand{\rad}{\operatorname{rad}}
\newcommand{\rank}{\operatorname{rank}}
\newcommand{\Hom}{\operatorname{Hom}}
\newcommand{\Image}{\operatorname{Im}}
\newcommand{\Ker}{\operatorname{Ker}}
\newcommand{\GL}{\operatorname{GL}}
\newcommand{\SL}{\operatorname{SL}}
\newcommand{\charac}{\operatorname{char}}
\newcommand{\CM}{\operatorname{CM}}
\newcommand{\Irr}{\operatorname{Irr}}
\newcommand{\twoheadlongrightarrow}{\relbar\joinrel\twoheadrightarrow}
\begin{document}

%%%%%---title---%%%%%
\title[Dual $F$-signature of CM modules over rational double points]{Dual $F$-signature of Cohen-Macaulay modules \\ over rational double points}
\author[YUSUKE NAKAJIMA]{YUSUKE NAKAJIMA}
\date{}

\subjclass[2010]{Primary 13A35, 13A50; Secondary 13C14, 16G70.}
\keywords{$F$-signature, Dual $F$-signature, Generalized $F$-signature, Hilbert-Kunz multiplicity, Quotient surface singularities, Auslander-Reiten quiver.}

\address{Graduate School Of Mathematics, Nagoya University, Chikusa-Ku, Nagoya,
 464-8602 Japan} 
\email{m06022z@math.nagoya-u.ac.jp}
\maketitle

%%%---abstract---%%%
\begin{abstract}
The dual $F$-signature is a numerical invariant defined via the Frobenius morphism in positive characteristic. 
It is known that the dual $F$-signature characterizes some singularities. However, the value of the dual $F$-signature 
is not known except in only a few cases. In this paper, we determine the dual $F$-signature of Cohen-Macaulay modules over two-dimensional rational double points. 
The method for determining the dual $F$-signature is also valid for determining the Hilbert-Kunz multiplicity. We discuss it in appendix. 
\end{abstract}

\tableofcontents

%%%%%---text_start---%%%%%
\section{Introduction}

Throughout this paper, we suppose that $k$ is an algebraically closed field of prime characteristic $p>0$.
Let $R$ be a Noetherian ring of prime characteristic $p>0$, then 
we can define the Frobenius morphism $F:R\rightarrow R\;(r\mapsto r^p)$. 
For $e\in\mathbb{N}$, we also define the $e$-times iterated Frobenius morphism $F^e:R\rightarrow R\;(r\mapsto r^{p^e})$. 
For any $R$-module $M$, we define the $R$-module ${}^eM$ via $F^e$ as follows. 
That is, ${}^eM$ is just $M$ as an abelian group, and its $R$-module structure is defined by $r\cdot m\coloneqq F^e(r)m=r^{p^e}m$ $\;(r\in R,\;m\in M)$.
We say $R$ is $F$-finite if ${}^1R$ $($and hence every ${}^eR)$ is a finitely generated $R$-module. 
For example, if $R$ is an essentially of finite type over a perfect field or 
complete Noetherian local ring with a perfect residue field $k$, then $R$ is $F$-finite. 
In this paper, we only discuss such rings, thus the $F$-finiteness is always satisfied. 

\bigskip

In positive characteristic commutative algebra, we understand the properties of $R$ through the structure of ${}^eM$. 
For this purpose, several numerical invariants are defined. 
Firstly, we introduce the notion of $F$-signature defined by C.~Huneke and G.~Leuschke. 

\begin{dfn}[\cite{HL}]
\label{HLFsig}
Let $(R,\fkm,k)$ be a $d$-dimensional reduced $F$-finite Noetherian local ring with $\charac\,R=p>0$.
For each $e\in\mathbb{N}$, we decompose ${}^eR$ as follows
\[
{}^eR\cong R^{\oplus a_e}\oplus M_e,
\]
where $M_e$ has no free direct summands. We call $a_e$ the $e$-th $F$-splitting number of $R$.
Then, we call the limit 
\[
 s(R)\coloneqq\lim_{e\rightarrow\infty}\frac{a_e}{p^{ed}},
\]
the $F$-signature of $R$.
\end{dfn}

The existence of the $F$-signature of $R$ was shown by K. Tucker \cite{Tuc}. By Kunz's theorem, $R$ is regular if and only if  
${}^eR$ is a free $R$-module of rank $p^{ed}$ \cite{Kun}. 
Thus, roughly speaking, the $F$-signature $s(R)$ measures the deviation from regularity. 
The next theorem confirms this intuition. 

\begin{thm}[\cite{HL}, \cite{Yao2}, \cite{AL}]
Let $(R,\fkm,k)$ be a $d$-dimensional reduced $F$-finite Noetherian local ring with $\charac\,R=p>0$. 
Then we have the following. 
\begin{itemize}
 \item [(1)] $R$ is regular if and only if $s(R)=1$, 
 \item [(2)] $R$ is strongly $F$-regular if and only if $s(R)>0$.
\end{itemize}
\end{thm}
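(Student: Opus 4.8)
The plan is to establish part (2) first and then deduce part (1) from it together with Kunz's theorem. As a preliminary I would pass to the $\fkm$-adic completion: this changes neither $d$, nor reducedness, nor the $F$-splitting numbers $a_e$ (since $\widehat{{}^eR}\cong{}^e\widehat R$ and a free direct summand splits off before completing if and only if it does so afterwards), nor the regularity or the strong $F$-regularity of $R$ (here one uses that $F$-finite rings are excellent). So I may assume $R$ is complete, in which case finitely generated $R$-modules admit Krull--Schmidt decompositions and the free rank of a finite direct sum equals the sum of the free ranks. The trivial implications are then immediate from Kunz's theorem \cite{Kun}: if $R$ is regular then ${}^eR$ is free of rank $p^{ed}$, so $a_e=p^{ed}$ and $s(R)=1>0$; and $s(R)=1\Rightarrow s(R)>0$.

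For part (2) I would use the $e$-th \emph{splitting ideal}
\[
I_e=\{\,r\in R:\ \varphi(r)\in\fkm\ \text{for every}\ \varphi\in\Hom_R({}^eR,R)\,\},
\]
where $r$ is regarded as an element of ${}^eR$. Writing ${}^eR\cong R^{a_e}\oplus M_e$ with $M_e$ having no free summand, every homomorphism $M_e\to R$ has image in $\fkm$; one then reads off that $I_e$ is an ideal of $R$ containing $\fkm^{[p^e]}$, that $r\notin I_e$ precisely when the $R$-linear map $R\to{}^eR$, $1\mapsto r$, splits, and that $a_e=\ell_R(R/I_e)$. Hence $s(R)>0$ if and only if $\ell_R(R/I_e)\ge\delta\,p^{ed}$ for some $\delta>0$ and all $e\gg0$. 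With this in hand the implication ``$s(R)>0\Rightarrow R$ strongly $F$-regular'' is short: let $c$ be a nonzerodivisor; if $c\in I_e$ for every $e$, then $\fkm^{[p^e]}+cR\subseteq I_e$, so
\[
\ell_R(R/I_e)\le\ell_R\bigl(R/(\fkm^{[p^e]}+cR)\bigr)=O(p^{e(d-1)})=o(p^{ed}),
\]
since $\fkm^{[p^e]}+cR$ cuts out, in the $(d-1)$-dimensional ring $R/cR$, a Frobenius power of the maximal ideal; this forces $s(R)=0$. Thus $s(R)>0$ implies that for every nonzerodivisor $c$ the map $R\to{}^eR$, $1\mapsto c$, splits for some $e$, which is precisely strong $F$-regularity (and then $R$ is automatically a normal domain).

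The converse in part (2), ``strongly $F$-regular $\Rightarrow s(R)>0$'', is the step I expect to be the real obstacle; this is the theorem of Aberbach--Leuschke \cite{AL}. A single splitting does not suffice: from $c\notin I_{e_0}$ one gets only $a_{e+e_0}\ge a_e$, which is compatible with $a_e$ being $o(p^{ed})$. One has to propagate one splitting into a positive \emph{proportion} of splittings, uniformly in $e$---morally, produce an $\fkm$-primary ideal's worth of elements lying outside $I_e$ for $e\gg0$---and this requires a test element together with control of how the ideals $I_e$ interact under the Frobenius. Carrying this out is the genuine work, and here I would follow \cite{AL}; the Gorenstein case, and the implication $s(R)>0\Rightarrow$ weak $F$-regularity, are already in \cite{HL} (see also \cite{Yao2}).

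It remains to prove the converse in part (1): $s(R)=1\Rightarrow R$ regular. Since $s(R)=1>0$, part (2) lets me assume $R$ is a domain, so each ${}^eR$ is torsion-free of rank $p^{ed}$ and $\rank_R M_e=p^{ed}-a_e$. Suppose $R$ is not regular; then ${}^1R$ is not free by Kunz, so in ${}^1R\cong R^{a_1}\oplus M_1$ we have $M_1\ne0$, whence $a_1\le p^d-1$. Put $\beta=(p^d-a_1)/p^d$, so $0<\beta\le1$. Applying the additive functor ${}^1(-)$ to ${}^eR\cong R^{a_e}\oplus M_e$ gives ${}^{e+1}R\cong(R^{a_1}\oplus M_1)^{\oplus a_e}\oplus{}^1M_e$; since $M_1$, and hence $M_1^{\oplus a_e}$, has no free summand, Krull--Schmidt together with $\rank_R{}^1N=p^d\rank_R N$ gives
\[
a_{e+1}=a_1a_e+(\text{free rank of }{}^1M_e)\le a_1a_e+\rank_R({}^1M_e)=a_1a_e+p^d(p^{ed}-a_e).
\]
Dividing by $p^{(e+1)d}$ and writing $\sigma_e=a_e/p^{ed}$, this says $\sigma_{e+1}\le1-\beta\sigma_e$; letting $e\to\infty$ (the limit $s(R)$ exists by Tucker's theorem) gives $s(R)\le1-\beta\,s(R)$, i.e.\ $s(R)\le\frac1{1+\beta}<1$, contradicting $s(R)=1$. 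Hence $R$ is regular. (Alternatively, one could route through the Hilbert--Kunz multiplicity, deducing $s(R)=1\Rightarrow e_{\mathrm{HK}}(R)=1$ and invoking the theorem of Watanabe--Yoshida; but the recursion above is more direct.)
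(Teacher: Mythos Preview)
The paper does not prove this theorem: it is quoted in the Introduction as background, with attribution to \cite{HL}, \cite{Yao2}, \cite{AL}, and no argument is given. So there is no ``paper's own proof'' to compare your proposal against.

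That said, your sketch is a faithful outline of how the cited references proceed. The splitting-ideal description $a_e=\ell_R(R/I_e)$ and the estimate $\ell_R(R/(\fkm^{[p^e]}+cR))=O(p^{e(d-1)})$ give the implication $s(R)>0\Rightarrow$ strongly $F$-regular exactly as in \cite{HL}; you are right that the converse is the substantive content of \cite{AL} and cannot be obtained by elementary bookkeeping. Your recursion for part~(1), bounding $\sigma_{e+1}\le 1-\beta\sigma_e$ from the decomposition ${}^{e+1}R\cong({}^1R)^{a_e}\oplus{}^1M_e$, is correct and is in the spirit of the Huneke--Leuschke argument; the alternative route via $e_{\mathrm{HK}}$ that you mention is the one actually taken in \cite{HL}.

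One small caveat: your rank computations $\rank_R{}^eR=p^{ed}$ and $\rank_R{}^1M_e=p^d\rank_R M_e$ presuppose that the residue field $k$ is perfect (equivalently $[k:k^p]=1$); in general the correct rank is $p^{e(d+\alpha)}$ with $\alpha=\log_p[k:k^p]$, and the definition of $s(R)$ must be normalized accordingly. Since the present paper works throughout over an algebraically closed field this is harmless here, but if you intend the argument at the generality of the stated theorem you should either assume $k$ perfect or insert the standard correction factor.
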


   \medskip

This notion is extended for a finitely generated $R$-module as follows.
\begin{dfn}[\cite{San}]
\label{SanFsig}
Let $(R,\fkm,k)$ be a $d$-dimensional reduced $F$-finite Noetherian local ring with $\charac\,R=p>0$.
For a finitely generated $R$-module $M$ and $e\in\mathbb{N}$, set
\[
 b_e(M)\coloneqq\operatorname{max}\{n\;|\;\exists\varphi:{}^eM\twoheadrightarrow M^{\oplus n}\}, 
\]
and call it the $e$-th $F$-surjective number of $M$.

Then we call the limit
\[
 s(M)\coloneqq\lim_{e\rightarrow\infty}\frac{b_e(M)}{p^{ed}}
\]
the dual $F$-signature of $M$ if it exists.
\end{dfn}

\begin{rem}
Since the morphism ${}^eR\twoheadrightarrow R^{\oplus b_e(R)}$ splits, 
if $M$ is isomorphic to the basering $R$, then the dual $F$-signature of $R$ in sense of Definition\,\ref{SanFsig} 
coincides with the $F$-signature of $R$. Thus, we use the same notation unless it causes confusion. 
\end{rem}

Just like the $F$-signature, the dual $F$-signature also characterizes some singularities.

\begin{thm}[\cite{San}]
Let $(R,\fkm,k)$ be a $d$-dimensional reduced $F$-finite Cohen-Macaulay local ring with $\charac\,R=p>0$. Then we have
\begin{itemize}
 \item [(1)] $R$ is $F$-rational if and only if $s(\omega_R)>0$, 
 \item [(2)] $s(R)\le s(\omega_R)$,
 \item [(3)] $s(R)= s(\omega_R)$ if and only if $R$ is Gorenstein.\end{itemize}
\end{thm}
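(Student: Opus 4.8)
Everything runs through one structural fact about $F$-finite Cohen--Macaulay local rings $(R,\fkm,k)$ with canonical module $\omega_R$: each ${}^eR$ is a maximal Cohen--Macaulay ($\MCM$) $R$-module, the contravariant functor $(-)^{\dagger}\coloneqq\Hom_R(-,\omega_R)$ is exact on $\MCM$ modules and satisfies $(-)^{\dagger\dagger}\simeq\id$, and there is a natural isomorphism $\Hom_R({}^eR,\omega_R)\simeq{}^e\omega_R$. The last point is Grothendieck duality for the module-finite morphism $F^e\colon R\to{}^eR$: $\Hom_R({}^eR,\omega_R)$ is a canonical module of the ring ${}^eR$, and since ${}^eR$ is nothing but $R$ viewed through $F^e$, its canonical module is ${}^e\omega_R$. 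I would isolate this as a preliminary lemma; after that, (2) and one direction of (3) are bookkeeping, while (1) and the other direction of (3) each need one genuinely hard input.

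\textbf{Part (2).} Write ${}^eR\cong R^{\oplus a_e}\oplus M_e$ with $M_e$ having no free direct summand. Applying $(-)^{\dagger}$ and the lemma gives ${}^e\omega_R\simeq\omega_R^{\oplus a_e}\oplus M_e^{\dagger}$, so the projection onto the first summand is a surjection ${}^e\omega_R\twoheadrightarrow\omega_R^{\oplus a_e}$; hence $b_e(\omega_R)\ge a_e$ for every $e$. Dividing by $p^{ed}$ and letting $e\to\infty$ (the limit defining $s(R)$ exists by \cite{Tuc}) yields $s(R)\le s(\omega_R)$.

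\textbf{Part (3).} If $R$ is Gorenstein, then $\omega_R\cong R$ and the two invariants agree by definition. For the converse, I would first reduce to the case where $R$ is $F$-rational: if it is not, then by (1) $s(\omega_R)=0$, and this can happen with $R$ not Gorenstein (e.g.\ in dimension one, where $F$-rational just means regular), so the implication has content only when $s(\omega_R)>0$. Now assume $R$ is $F$-rational but not Gorenstein; then $\omega_R$ is not free (it has rank one, so for $\omega_R$ freeness, cyclicity, and Gorensteinness of $R$ are equivalent) and $R$ is not regular, so $M_e\ne0$ for $e\gg0$. A surjection ${}^e\omega_R\twoheadrightarrow\omega_R^{\oplus n}$ has $\MCM$ kernel by the depth lemma, so $(-)^{\dagger}$ and the lemma convert it into a short exact sequence $0\to R^{\oplus n}\to{}^eR\to C\to0$ with $C$ an $\MCM$ module, and the process is reversible; hence $b_e(\omega_R)$ is the largest $n$ for which $R^{\oplus n}$ embeds in ${}^eR$ with $\MCM$ cokernel. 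Split embeddings account for the $a_e$ copies from (2), and in the Gorenstein case they are all of them: dualizing $0\to R^{\oplus n}\to{}^eR\to C\to0$ with $\Hom_R(-,R)$ and using $\Ext^1_R(C,R)=0$ exhibits $R^{\oplus n}$ as a \emph{quotient} of ${}^eR$, forcing $n\le a_e$. Thus the forward implication of (3) comes down to the claim that when $R$ is not Gorenstein there are asymptotically strictly more: one can find embeddings $R^{\oplus n_e}\hookrightarrow{}^eR$ with $\MCM$ cokernel and $\liminf_e n_e/p^{ed}>s(R)$, contradicting $s(R)=s(\omega_R)$. These are necessarily non-split, and cannot come from direct summands either (since $M_e$ has no free summand, $M_e^{\dagger}$ has no $\omega_R$-summand), so this is the crucial point: an arbitrary $\MCM$ module need not surject onto $\omega_R$ at all (e.g.\ the maximal ideal of $k[[t^2,t^3]]$ does not surject onto $R\cong\omega_R$), and the extra surjections must be extracted from the particular structure of ${}^e\omega_R$ --- plausibly via its minimal number of generators $\mu_R({}^e\omega_R)=\ell_R(\omega_R/\fkm^{[p^e]}\omega_R)$, which grows like $e_{\mathrm{HK}}(R)\,p^{ed}$, together with a Bourbaki-type construction of an $\MCM$ quotient of controlled rank, bounding $b_e(\omega_R)-a_e$ below by a fixed positive multiple of $p^{ed}$.

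\textbf{Part (1), and where the difficulty lies.} Part (1) is the deepest. Since $H^d_{\fkm}(R)$ is the Matlis dual of $\omega_R$, the modules ${}^e\omega_R$ and their surjections onto copies of $\omega_R$ are, dually, controlled by the natural Frobenius action on $H^d_{\fkm}(R)$; and for a Cohen--Macaulay ring, $F$-rationality is exactly the statement that this action has no proper nonzero stable submodule, equivalently that the tight closure $0^{*}_{H^d_{\fkm}(R)}$ vanishes. The plan is to match ``$s(\omega_R)>0$'' with this condition: if $0^{*}_{H^d_{\fkm}(R)}\ne0$, the $*$-annihilated submodule obstructs surjecting ${}^e\omega_R$ onto many copies of $\omega_R$, forcing $b_e(\omega_R)=o(p^{ed})$ and so $s(\omega_R)=0$; conversely, if $0^{*}_{H^d_{\fkm}(R)}=0$, one must manufacture surjections ${}^e\omega_R\twoheadrightarrow\omega_R^{\oplus n}$ with $n$ a definite fraction of $p^{ed}$, which requires uniform-convergence estimates of the kind underlying Tucker's proof \cite{Tuc} that the $F$-signature exists. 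This tight-closure/uniformity argument, together with the Bourbaki-type extraction in (3), is where I expect essentially all of the work to be; parts (2) and the Gorenstein direction of (3) are formal consequences of the duality lemma.
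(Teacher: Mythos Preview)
The paper does not prove this theorem. It is quoted as background from Sannai's paper \cite{San} (the bracketed citation in the theorem header signals exactly this), and no argument for any of (1)--(3) appears anywhere in the present paper; the author immediately moves on to motivating questions and the setup for quotient surface singularities. So there is no ``paper's own proof'' to compare your proposal against.

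That said, a brief word on your outline. Your framework is the right one and is essentially Sannai's: the duality $({}^eR)^{\dagger}\simeq{}^e\omega_R$ converts free summands of ${}^eR$ into $\omega_R$-summands of ${}^e\omega_R$, which gives (2) and the easy direction of (3) exactly as you wrote. For (1) and the hard direction of (3) you correctly identify the substantive content but stop short of supplying it; your ``Bourbaki-type extraction'' and ``uniform-convergence estimates'' are placeholders rather than arguments. One caution on (3): as stated here the equivalence is unconditional, so your reduction ``the implication has content only when $s(\omega_R)>0$'' is not a reduction at all---you would need to handle the case $s(R)=s(\omega_R)=0$ separately (and indeed the original \cite{San} addresses this; the statement is not vacuous there). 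If you want the actual proofs, you must consult \cite{San} directly.
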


In this way, the value of $s(R)$ and $s(\omega_R)$ characterize some singularities. Now we have some questions. 
Let $M$ be a finitely generated $R$-module which may not be $R$ or $\omega_R$. Then 
\begin{itemize}
 \item [$\cdot$] Does the value of $s(M)$ contain any information about singularities ?
 \item [$\cdot$] What does the explicit value of $s(M)$ mean ?
 \item [$\cdot$] Is there any connection between $s(M)$ and other numerical invariants ? 
\end{itemize}

However, it is difficult to try these questions for now, 
because the value of the dual $F$-signature is not known and we don't have an effective method for determining it except in only a few cases. 
For example, the case of two-dimensional Veronese subrings is studied in \cite[Example\,3.17]{San} and the author determined the dual $F$-signature for a certain 
class of maximal Cohen-Macaulay (= MCM) modules over cyclic quotient surface singularities in \cite{Nak}. 
As far as the author knows, there is no other example. 
Thus, in this paper, we investigate the dual $F$-signature for MCM modules over two-dimensional rational double points
(or Du Val singularities, Kleinian singularities, ADE singularities in the literature). 
Since we already know the case of $A_n$ $($see \cite{Nak}$)$, we discuss the type $D_n, E_6, E_7$ and $E_8$ in this paper.

\bigskip

The structure of this paper is as follows.
In order to determine the dual $F$-signature, we have to understand the following topics: 
\begin{itemize}
\item[(1)] The structure of ${}^eM$, namely 
  \begin{itemize} 
   \item What kind of MCM appears in ${}^eM$ as a direct summand?
   \item The asymptotic behavior of ${}^eM$ on the order of $p^{2e}$.
  \end{itemize}
\item[(2)] How do we construct a surjection ${}^eM\twoheadrightarrow M^{\oplus b_e}$? 
\end{itemize}
To show the former one, we need the notion of generalized $F$-signature. So we review it in Section\,\ref{Gene_Fsig}. 
After that we will use the notion of the Auslander-Reiten quiver to solve the latter problem. 
Thus, we give a brief summary of Auslander-Reiten theory in Section\,\ref{tau_cat}. 
Even though, the main body of this paper is devoted to observing two-dimensional rational double points 
(i.e. invariant subrings under the action of finite subgroups of $\SL(2,k)$\;), 
the arguments in Section\,\ref{Gene_Fsig} and \ref{tau_cat} are also valid for a more general situation. 
Thus, we discuss finite subgroups of the general linear group over $k$. 
In Section\,\ref{Sec_keylemma}, we prepare a technical lemma to construct a surjection ${}^eM\twoheadrightarrow M^{\oplus b_e}$ efficiently. 
In Section\,\ref{DFsig_RDP}, we actually determine the value of the dual $F$-signature of each MCM module for all the ADE cases. 
Since the strategy for determining the dual $F$-signature is almost the same, we will give a concrete explanation only for the case of $D_5$. 
For the other cases, we only mention an outline. 
In Section\,\ref{summary}, we collect the values of the dual $F$-signature determined in the previous section. 

Since the methods for determining the dual $F$-signature is also valid for determining the Hilbert-Kunz multiplicity, we will discuss it in appendix. 

\subsection*{Conventions}
As we noted in the beginning of this paper, we assume that $k$ is an algebraically closed field of $\charac\,k=p>0$. 
Throughout this paper, when we discuss a composition of morphisms $fg$, it means $f$ is followed by $g$, that is, $fg=g{\footnotesize\circ}f$. 
Similarly, for quivers an arrow $ab$ means $a$ is followed by $b$.  
%$($Although this notation seems to be opposite to the usual one, when we chase a path, this notation is convenient.$)$ 

%%%%%%%%%%%%%%%%%%%%%%%%%%%%%%%%%%%%%%%%%%%%%%%%%%%%%%%%%%%%%%%%%%%%%%%%%%%%%%%%%%%%%%%%%%%%%%%%%%%%%%%%%%%%
\section{Generalized $F$-signature of invariant subrings}
\label{Gene_Fsig}

Let $G$ be a finite subgroup of $\GL(d,k)$ which contains no pseudo-reflections except the identity 
and assume that the order of $G$ is coprime to $p=\mathrm{char}\;k$.
We denote the invariant subring of a power series ring $S\coloneqq k[[x_1,\cdots,x_d]]$ under the action of $G$ by $R\coloneqq S^G$.  
For determining the dual $F$-signature of a finitely generated $R$-module $M$, we have to understand the structure of ${}^eM$. 
For example, we would like to know the direct sum decomposition of ${}^eM$,  
asymptotic behavior of the multiplicities of each direct summands on the order of $p^{ed}$.  
For this purpose, we review the results of generalized $F$-signature of invariant subrings due to \cite{HN}. 
Although we are interested in the two-dimensional case, the results in this section hold for an arbitrary dimension. 
So we will state for the $d$-dimensional case. 

   \bigskip

For understanding the structure of ${}^eM$, we introduce the notion of finite $F$-representation type defined 
by K.~Smith and M.~Van den Bergh \cite{SVdB} as follows.

\begin{dfn}[\cite{SVdB}] 
We say $R$ has finite $F$-representation type $($= FFRT$)$ by $\calS$ if 
there is a finite set $\calS$ of isomorphism classes of indecomposable finitely generated $R$-modules,
such that for any $e\in\mathbb{N}$, the $R$-module ${}^eR$ is isomorphic to a finite direct sum of elements in $\calS$. 
\end{dfn}

For example, since a power series ring $S$ is regular, ${}^eS$ is isomorphic to $S^{\oplus p^{ed}}$. 
Thus, $S$ has FFRT by $\{S\}$ and it is known that FFRT is inherited by a direct summand \cite[Proposition\,3.1.4]{SVdB}. 
So the invariant subring $R$ also has FFRT. More precisely, we have the following proposition.

\begin{prop}(\cite[Proposition\,3.2.1]{SVdB}) 
Let $V_0=k,V_1,\cdots,V_n$ be the full set of non-isomorphic irreducible representations of $G$.
We set $M_t\coloneqq(S\otimes_kV_t)^G\;\;(t=0,1,\cdots,n)$.  
Then we see that $R$ has finite $F$-representation type by the finite set $\{M_0\cong R,M_1,\cdots,M_n\}$.
\end{prop}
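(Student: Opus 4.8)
The plan is to reduce the statement to two classical inputs. From Kunz's theorem, since $S$ is regular and $k$ is perfect, ${}^eS$ is a free $S$-module of rank $p^{ed}$. From Maschke's theorem, since $\charac k=p$ is coprime to $|G|$, the group algebra $kG$ is semisimple; consequently the invariant functor $(-)^G$ is exact and commutes with finite direct sums, and the Reynolds operator $\tfrac1{|G|}\sum_{g\in G}g$ furnishes canonical $R$-linear splittings. The idea is to carry the Kunz decomposition of ${}^eS$ along $G$-equivariantly and then pass to invariants.

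First I would record that the $G$-action on $S$ extends to ${}^eS$ by the same formulas and is $R$-linear there (because $g(r^{p^e}m)=r^{p^e}g(m)$ for $r\in R$), with $({}^eS)^G={}^eR$. The subspace $\fkm_S\cdot{}^eS$ --- which is the ideal $(x_1^{p^e},\dots,x_d^{p^e})$ regarded inside $S={}^eS$ --- is $G$-stable, so by semisimplicity of $kG$ there is a $G$-stable $k$-linear lift $\iota\colon U\hookrightarrow{}^eS$ of the $p^{ed}$-dimensional quotient $U={}^eS/\fkm_S{}^eS$. The map $S\otimes_kU\to{}^eS$, $s\otimes u\mapsto s^{p^e}\iota(u)$, is $S$-linear for the twisted structure, $G$-equivariant for the diagonal action on the source, and an isomorphism modulo $\fkm_S$; since both sides are $S$-free of rank $p^{ed}$, it is an isomorphism of $S$-$kG$-bimodules, hence of $R$-$kG$-bimodules.

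Decomposing $U\cong\bigoplus_{t=0}^nV_t^{\oplus m_{e,t}}$ as a $kG$-module then gives ${}^eS\cong\bigoplus_t(S\otimes_kV_t)^{\oplus m_{e,t}}$, and applying the additive functor $(-)^G$ yields
\[
{}^eR\;=\;({}^eS)^G\;\cong\;\bigoplus_{t=0}^n M_t^{\oplus m_{e,t}},\qquad M_t=(S\otimes_kV_t)^G,
\]
with $M_0\cong R$ and $\sum_t m_{e,t}\dim_kV_t=p^{ed}$. This already shows that ${}^eR$ is, for every $e$, a direct sum of members of the fixed finite family $\{M_0,\dots,M_n\}$.

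To finish I must check that each $M_t$ is a finitely generated indecomposable $R$-module, as required in the definition of FFRT. Finite generation is immediate, since $M_t$ is an $R$-submodule of the module-finite $R$-module $S\otimes_kV_t$. For indecomposability I would use the identification $\operatorname{End}_R(M_t)\cong(S\otimes_k\operatorname{End}_k(V_t))^G$: its ``positive-degree'' part $(\fkm_S\otimes_k\operatorname{End}_kV_t)^G$ is a two-sided ideal which is topologically nilpotent modulo $\fkm_R$ and hence lies in the Jacobson radical, while the residue ring is $(\operatorname{End}_kV_t)^G=k$ by Schur's lemma ($V_t$ irreducible, $k=\bar k$), so $\operatorname{End}_R(M_t)$ is local and $M_t$ is indecomposable. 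I expect the main obstacle to be exactly this last identification of $\operatorname{End}_R(M_t)$ --- a Galois-descent/reflexivity argument relying on $R$ being normal, $S$ being module-finite over $R$, and $G$ containing no pseudo-reflections --- together with making the equivariant form of Kunz's theorem precise in the presence of the twisted module structures; both are standard in the McKay-correspondence literature, and everything else is formal from the semisimplicity of $kG$.
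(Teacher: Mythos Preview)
The paper does not give its own proof of this proposition; it is quoted verbatim from \cite{SVdB}, and the indecomposability of the $M_t$ is deferred to the subsequent Remark (citing \cite{HN}). Your argument is correct and is essentially the standard proof found in those references: make Kunz's decomposition of ${}^eS$ $G$-equivariant by averaging a $k$-linear splitting of ${}^eS\twoheadrightarrow {}^eS/\fkm_S^{[p^e]}$, then take $G$-invariants. Your treatment of indecomposability via $\operatorname{End}_R(M_t)\cong (S\otimes_k\operatorname{End}_kV_t)^G$ being local is likewise the standard route (Auslander); as you correctly flag, this identification is where the hypothesis that $G$ contains no pseudo-reflections is used, and it is exactly the point the paper relegates to the Remark. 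There is nothing further to compare.
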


From this proposition, we can describe ${}^eR$ as follows.
\begin{equation}
\label{iso_p^ed}
{}^eR\cong R^{\oplus c_{0,e}}\oplus M_1^{\oplus c_{1,e}}\oplus\cdots\oplus M_n^{\oplus c_{n,e}}.
\end{equation}

\begin{rem}
Under the assumption $G$ contains no pseudo-reflections except the identity, 
we can see that each $M_t$ is an indecomposable maximal Cohen-Macaulay (=MCM) $R$-module and $M_s\not\cong M_t\; (s\neq t)$. 
Moreover, the multiplicities $c_{t,e}$ are determined uniquely in that case. For more details, we refer the reader to \cite[Section\,2]{HN}.
\end{rem}

Since the invariant subring $R$ has FFRT, the limit 
$\displaystyle\lim_{e\rightarrow\infty}\frac{c_{t,e}}{p^{de}}\;(t=0,1,\cdots,n)$ exists \cite{SVdB}, \cite{Yao}. 
We denote this limit by $s(R,M_t)\coloneqq\displaystyle\lim_{e\rightarrow\infty}\frac{c_{t,e}}{p^{de}}$ and call it the (generalized) $F$-signature of $M_t$. 
The value of $s(R,M_t)$ is known as follows.

\begin{thm}(\cite[Theorem\,3.4]{HN}) 
\label{gene-Fsig}
For $t=0,1,\cdots,n$, we have 
\[
s(R,M_t)=\frac{\rank_RM_t}{|G|}
\]
\end{thm}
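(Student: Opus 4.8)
The plan is to compute the multiplicities $c_{t,e}$ appearing in the decomposition \eqref{iso_p^ed} by passing to the skew group ring $S * G$ and using the well-known Morita-type correspondence between MCM $R$-modules and $S*G$-modules. First I would recall that, since $|G|$ is coprime to $p$, the natural inclusion $R = S^G \hookrightarrow S$ makes $S$ a finitely generated projective $R$-module, and the functor $(-)^G$ together with $S \otimes_R (-)$ sets up an equivalence between the category of finitely generated $(S*G)$-modules and a suitable category of $R$-modules; under this equivalence the indecomposable module $M_t = (S \otimes_k V_t)^G$ corresponds to $S \otimes_k V_t$. The key observation is that ${}^eR$, as an $R$-module, is the $G$-invariant part of ${}^eS$, and ${}^eS \cong S^{\oplus p^{ed}}$ as an $S$-module; the point is to track the $G$-action. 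Concretely, ${}^eS$ is a free $S$-module with basis the monomials $x_1^{a_1}\cdots x_d^{a_d}$ with $0 \le a_i < p^e$, and $G$ permutes-with-coefficients this basis, so ${}^eS$ is an $S*G$-module which, forgetting down to $kG$, is a representation of dimension $d \cdot p^{ed}$ over the (semisimple) group algebra $kG$.

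The main computation is then: the multiplicity $c_{t,e}$ of $M_t$ in ${}^eR$ equals the multiplicity of $V_t$ in the $kG$-module $S_{<p^e} := \bigoplus_{0 \le a_i < p^e} k \cdot x_1^{a_1}\cdots x_d^{a_d}$, which carries the restricted $G$-action coming from the linear action of $G$ on the $x_i$. (This is the content one extracts from \cite[Section 2]{HN} or directly from the $S*G$ picture: an $S*G$-module that is $S$-free of rank $r$ decomposes according to the $kG$-structure of the "degree-zero part", and taking invariants translates multiplicity of $V_t$ into multiplicity of $M_t$.) So I reduce the theorem to showing
\[
\lim_{e\to\infty} \frac{1}{p^{de}}\bigl[\text{multiplicity of } V_t \text{ in } S_{<p^e}\bigr] = \frac{\dim_k V_t}{|G|},
\]
and then noting $\rank_R M_t = \dim_k V_t$ (because $M_t \otimes_R \operatorname{Frac}(R) \cong (S \otimes_k V_t \otimes_R \operatorname{Frac}(R))^G$ has dimension $\dim_k V_t$, since generically the $G$-cover is étale).

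For the limit I would use character theory: the multiplicity of $V_t$ in a $kG$-module $W$ is $\frac{1}{|G|}\sum_{g\in G} \overline{\chi_t(g)}\,\chi_W(g)$, where $\chi_W$ is the Brauer character (legitimate since $p \nmid |G|$). Applied to $W = S_{<p^e}$, the trace of $g$ on $S_{<p^e}$ is a sum over the monomial basis, and for $g = \mathrm{id}$ this trace is exactly $p^{de}$ while for $g \ne \mathrm{id}$ one shows the trace is $o(p^{de})$ — indeed, diagonalizing $g$ with eigenvalues $\zeta_1,\dots,\zeta_d$ (roots of unity, not all equal to $1$), the trace of $g$ on $S_{<p^e}$ is $\prod_{i=1}^d (1 + \zeta_i + \cdots + \zeta_i^{p^e - 1})$, and each factor is bounded independently of $e$ when $\zeta_i \ne 1$ (it is $(\zeta_i^{p^e}-1)/(\zeta_i - 1)$, of modulus at most $2/|\zeta_i - 1|$), with at least one such factor present. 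Hence the whole product is $O(p^{(d-1)e})$. Therefore
\[
\frac{1}{p^{de}}\cdot\frac{1}{|G|}\sum_{g\in G}\overline{\chi_t(g)}\,\chi_{S_{<p^e}}(g)
\;\longrightarrow\;
\frac{1}{|G|}\,\overline{\chi_t(\mathrm{id})} \;=\; \frac{\dim_k V_t}{|G|},
\]
which is the desired value.

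I expect the main obstacle to be the bookkeeping in the first reduction — justifying cleanly that the multiplicity of the MCM module $M_t$ as an $R$-summand of ${}^eR$ is literally the $kG$-multiplicity of $V_t$ in $S_{<p^e}$, rather than something twisted. This requires being careful that the $S*G$-equivalence sends free summands to the correct objects, that $R$ having no non-identity pseudo-reflections guarantees the $M_t$ are pairwise non-isomorphic indecomposable MCM modules (so the decomposition \eqref{iso_p^ed} is genuinely unique and the multiplicities are well-defined), and that the $G$-action on the monomial basis of ${}^eS$ is the "untwisted" linear action — there is a potential Frobenius twist on the coefficients that one must check washes out because $k$ is perfect and the $\zeta_i$ are roots of unity of order prime to $p$. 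Once that dictionary is pinned down, the analytic estimate above is routine. (I would cite \cite{HN} for the dictionary and present the character computation as the substantive step, or, if a self-contained argument is wanted, spell out the $S*G$ reduction in a preliminary lemma.)
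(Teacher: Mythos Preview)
The paper does not prove this theorem at all: it is quoted verbatim from \cite[Theorem~3.4]{HN} and used as a black box, so there is no ``paper's own proof'' to compare against. Your argument is correct and is precisely the standard route (and, to my knowledge, essentially what \cite{HN} does): pass to the skew group algebra $S\ast G$, identify the multiplicity $c_{t,e}$ with the $kG$-multiplicity of $V_t$ in $S/(x_1^{p^e},\dots,x_d^{p^e})$, and compute that multiplicity by characters, the point being that only the identity element contributes at top order $p^{de}$. One small remark: in the trace estimate you only need $g\neq 1$ to force some eigenvalue $\zeta_i\neq 1$, so the pseudo-reflection hypothesis is irrelevant for the asymptotic count itself; as you correctly note, it enters only to guarantee that the $M_t$ are pairwise non-isomorphic indecomposables, so that the multiplicities $c_{t,e}$ are well defined.
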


\begin{rem}
In the case of $t=0$, we have $s(R,R)=s(R)$ and the above result is also due to \cite[Theorem\,4.2]{WY2}.
Moreover, a similar result holds for a finite subgroup scheme of $\mathrm{SL_2}$ \cite[Lemma\,4.10]{HS}. 
\end{rem}

As a corollary, we also have the next statement .

\begin{cor}(\cite[Corollary\,3.10]{HN}) 
\label{gene-Fsig-cor}
Suppose an MCM $R$-module ${}^eM_t$ decomposes as follows.
\[
{}^eM_t\cong R^{\oplus d^t_{0,e}}\oplus M_1^{\oplus d^t_{1,e}}\oplus\cdots\oplus M_n^{\oplus d^t_{n,e}}
\]
Then, for all $s,t=0,\cdots,n$, we have 
\[
s(M_t,M_s)\coloneqq\lim_{e\rightarrow\infty}\frac{d^t_{s,e}}{p^{de}}=(\rank_RM_t)\cdot s(R,M_s)=\frac{(\rank_RM_t)\cdot(\rank_RM_s)}{|G|}.
\]
\end{cor}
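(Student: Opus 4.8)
The plan is to reduce the statement about ${}^eM_t$ to the already-known statement about ${}^eR$ (equation \eqref{iso_p^ed}) together with Theorem \ref{gene-Fsig}. The key observation is that $M_t = (S\otimes_k V_t)^G$ is a direct summand of ${}^eR \otimes_R M_t$ in a controlled way, or more precisely that ${}^e(M_t)$ and ${}^eR$ are linked through the reflexive (MCM) tensor operations. Concretely, since $M_t$ is a rank-$(\rank_R M_t)$ reflexive $R$-module that is free on the punctured spectrum, I would first argue that the \emph{generalized $F$-signatures} $s(M_t, M_s)$ exist: this follows because $R$ has FFRT by $\{M_0,\dots,M_n\}$, hence so does every $M_t$ (FFRT passes to direct summands and, more to the point, ${}^eM_t$ is built from the same finite list $\mathcal{S}$), so by \cite{SVdB}, \cite{Yao} the limits $d^t_{s,e}/p^{de}$ converge.

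Next I would pin down the multiplicities $d^t_{s,e}$ representation-theoretically. Writing $M \mapsto (S\otimes_k \rho)^G$ for the functor from $G$-representations to MCM $R$-modules, one has ${}^eS \cong S^{\oplus p^{de}}$ as $S$-modules, and $S$ as a $G$-equivariant $S$-module decomposes so that ${}^eR = ({}^eS)^G$ sees the $G$-representation on $S^{1/p^e}$ (equivalently, the decomposition numbers $c_{s,e}$ count multiplicities of $V_s$ in the relevant truncation of the coordinate ring). The module ${}^eM_t$ corresponds to tensoring that $G$-representation with $V_t$: that is, the multiplicity of $M_s$ in ${}^eM_t$ equals the multiplicity of $M_s$ in ${}^e R \otimes (\text{stuff involving } V_t)$, which at the level of representation multiplicities gives
\[
d^t_{s,e} = \sum_{j=0}^{n} c_{j,e}\cdot \bigl[\text{multiplicity of } V_s \text{ in } V_j \otimes_k V_t\bigr].
\]
Here I am using that the assignment $V \mapsto (S\otimes_k V)^G$ is additive and that $(S\otimes_k V_j)^G \otimes$-type operations correspond to tensor products of representations on the nose up to free summands / reflexive hull, which is exactly the kind of bookkeeping carried out in \cite[Section 2]{HN}.

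Dividing by $p^{de}$ and passing to the limit, $c_{j,e}/p^{de} \to s(R,M_j) = \rank_R M_j/|G|$ by Theorem \ref{gene-Fsig}, so
\[
s(M_t, M_s) = \sum_{j=0}^n \frac{\rank_R M_j}{|G|}\cdot \bigl[V_s : V_j\otimes_k V_t\bigr].
\]
Then the proof finishes with a pure character-theory identity: $\sum_j (\dim V_j)\,[V_s : V_j \otimes V_t] = (\dim V_t)(\dim V_s)$, because $\sum_j (\dim V_j) V_j \cong k[G]$ as a $G$-representation (the regular representation), and $k[G]\otimes V_t \cong (\dim V_t)\, k[G]$, whose $V_s$-multiplicity is $(\dim V_t)(\dim V_s)$. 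Since $\rank_R M_j = \dim V_j$, this yields $s(M_t,M_s) = (\rank_R M_t)(\rank_R M_s)/|G| = (\rank_R M_t)\cdot s(R,M_s)$, as claimed.

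The main obstacle I expect is the second step: justifying cleanly that the multiplicity of $M_s$ as a direct summand of the $R$-module ${}^eM_t$ is genuinely computed by the tensor-product multiplicities of the corresponding $G$-representations, i.e.\ that the functor $V\mapsto (S\otimes_k V)^G$ interacts with the Frobenius pushforward and with decomposition into indecomposables exactly as the representation theory predicts — with no stray free summands, no discrepancy between $\otimes_R$ and the reflexive hull, and the McKay-correspondence dictionary $\rank_R M_j = \dim V_j$ holding on the nose. This is where the hypotheses (no pseudo-reflections, $|G|$ coprime to $p$) and the results recalled from \cite{HN} do the real work, so in practice I would simply cite \cite[Section 2, Theorem 3.4]{HN} for this comparison rather than reproving it, and spend the written proof on the character-theoretic computation, which is short and self-contained.
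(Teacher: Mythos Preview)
Your argument is correct. Note, however, that the paper does not actually supply a proof of this corollary: it is stated with a bare citation to \cite[Corollary\,3.10]{HN}, so there is no ``paper's own proof'' to compare against beyond that reference. What you have written is a self-contained derivation of the cited result from Theorem~\ref{gene-Fsig}, and the strategy --- decompose ${}^eS$ as an $S[G]$-module, tensor with $V_t$, take $G$-invariants, pass to the limit, and finish with the regular-representation identity $\sum_j(\dim V_j)[V_s:V_j\otimes V_t]=(\dim V_s)(\dim V_t)$ --- is exactly the natural one and almost certainly what \cite{HN} does.

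One small remark: the ``main obstacle'' you flag is less delicate than you suggest. You never need to tensor over $R$ or take reflexive hulls. The clean route is to observe that since $S$ is complete local with $|G|\in S^\times$, every $S[G]$-module that is free over $S$ decomposes as $\bigoplus_j(S\otimes_kV_j)^{n_j}$ (lift the central idempotents of $k[G]$ to $S[G]$). Applying this to ${}^eS$ gives ${}^eS\cong\bigoplus_j(S\otimes_kV_j)^{c_{j,e}}$ as $S[G]$-modules, and then ${}^eM_t=({}^eS\otimes_kV_t)^G$ decomposes on the nose with multiplicities $d^t_{s,e}=\sum_jc_{j,e}[V_s:V_j\otimes_kV_t]$, with no correction terms. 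The rest of your computation then goes through verbatim.
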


In the rest of this paper, we suppose that $d=2$, that is, $R$ is the invariant subring of $S=k[[x,y]]$ under the action of 
a finite subgroup $G\subset\GL(2,k)$ which contains no pseudo-reflections except the identity. 

\begin{rem}
In the two-dimensional case, it is well known that $R$ is of finite CM representation type, that is, it has only finitely many 
non-isomorphic indecomposable MCM $R$-modules $\{R,M_1,\cdots,M_n \}$. 
As Corollary\,\ref{gene-Fsig-cor} shows, every indecomposable MCM $R$-modules appear in ${}^eM_t$ as a direct summand for sufficiently large $e\gg 0$. 
Therefore, the additive closure $\mathrm{add}_R({}^eM_t)$ coincides with the category of MCM $R$-modules $\CM(R)$. 
So we can apply several results in so-called Auslander-Reiten theory to $\mathrm{add}_R({}^eM_t)$. 
We discuss it in the next section.
\end{rem}

%%%%%%%%%%%%%%%%%%%%%%%%%%%%%%%%%%%%%%%%%%%%%%%%%%%%%%%%%%%%%%%
\section{Auslander-Reiten theory and counting argument}
\label{tau_cat}

\subsection{Auslander-Reiten quiver}
In order to construct a surjection ${}^eM_t\twoheadrightarrow M_t^{\oplus b_e}$, we will use the Auslander-Reiten (=AR) quiver. 
By using it, we visualize relations among MCM $R$-modules and construct a surjection efficiently. 
For defining it, we need some notions so-called the AR sequence $($or almost split sequence in the literature$)$ and irreducible morphisms. 
So we review some results of Auslander-Reiten theory in this subsection.
For details, see some textbooks (e.g. \cite{LW},\;\cite{Yo}). 

\begin{dfn}[Auslander-Reiten sequence]
Let $(A,\fkm,k)$ be a Henselian CM local ring and $M,N$ be indecomposable MCM $A$-modules.
We say that a non-split short exact sequence
\[
0\rightarrow N\overset{f}{\rightarrow}L\overset{g}{\rightarrow}M\rightarrow 0
\]
is AR sequence ending in $M$ $($or starting at $N)$ if for all MCM modules $X$ and for any morphism $\varphi:X\rightarrow M$ 
which is not a split surjection there is a morphism $\phi:X\rightarrow L$ such that $\varphi=\phi g$.
\end{dfn}

If the AR sequence exists, it is unique up to isomorphism. 
It is known that there exists the AR sequence ending in $M_t$ for any indecomposable MCM $R$-modules $M_t=(S\otimes_kV_t)^G\;(t\neq 0)$ as follows.
In our situation, the AR sequence constructed by the Koszul complex over $S$ and a natural representation of $G$ as follows $($for detail, see \cite[Chapter 10]{Yo} $)$.

\medskip

In the case of $t\neq 0$, the AR sequence ending in $M_t$ is 
\[
0\longrightarrow (S\otimes_k(\wedge^2V\otimes_kV_t))^G\longrightarrow (S\otimes_k(V\otimes_kV_t))^G\longrightarrow M_t=(S\otimes_kV_t)^G\longrightarrow 0, 
\]
where $V$ is a natural representation of $G$.

In the case of $t=0$, there exists the following sequence
\[
0\longrightarrow\omega_R=(S\otimes_k\wedge^2V)^G\longrightarrow(S\otimes_kV)^G\longrightarrow R=S^G\longrightarrow k\longrightarrow 0.
\]
This sequence is called the fundamental sequence of $R$.

We call the left term of these sequences AR translation and denote by $\tau(M_t)$. 
On the other hand, we denote the right term of the AR sequence starting at $M_t$ by $\tau^{-1}M_t$. 
It is known that $\tau(M_t)\cong(M_t\otimes_R\omega_R)^{**}$, where $(-)^*=\Hom_R(-,R)$ stands for the $R$-dual functor \cite{Aus1}. 
Sometimes we denote the middle term of the AR sequence ending in $M_t$ by $E_{M_t}$ for $t=1,\cdots,n$.

   \medskip
Next, we introduce the notion of irreducible morphism. 

\begin{dfn}[Irreducible morphism]
Suppose $M$ and $N$ are MCM $R$-modules. We decompose $M$ and $N$ into indecomposable modules as 
$M=\oplus_iM_i$,\;$N=\oplus_jN_j$. Also, we decompose $\psi\in \Hom_R(M,N)$ along the above decomposition as 
$\psi=(\psi_{ij}:M_i\rightarrow N_j)_{ij}$. 
Then we define submodule $\rad_R(M,N)\subset\Hom_R(M,N)$ as
\[
 \psi\in\rad_R(M,N)\Longleftrightarrow \text{no $\psi_{ij}$ is an isomorphism}.
\]
Furthermore, we define submodule $\rad^2_R(M,N)\subset\Hom_R(M,N)$. The submodule $\rad^2_R(M,N)$ consists of 
morphisms $\psi:M\rightarrow N$ such that $\psi$ decomposes as $\psi=fg$,
{\scriptsize
\[\xymatrix@C=8pt@R=8pt{
   M\ar[rr]^{\psi}\ar[dr]_f&&N\\
   &X\ar[ur]_g& \\
}\]}
where $X$ is an MCM $R$-module,\;$f\in\rad_R(M,X)$,\;$g\in\rad_R(X,N)$.
We say that a morphism $\psi:M\rightarrow N$ is irreducible if $\psi\in\rad_R(M,N)\setminus\rad^2_R(M,N)$. 
In this setting, we define the $k$-vector space $\Irr_R(M,N)$ as 
\[
 \Irr_R(M,N)\coloneqq\rad_R(M,N)\big/\rad^2_R(M,N). 
\]
\end{dfn}

   \medskip
We are now ready to define the AR quiver.

\begin{dfn}[Auslander-Reiten quiver]
The AR quiver of $R$ is an oriented graph whose vertices are indecomposable MCM $R$-modules $\{R,M_1,\cdots,M_n\}$ and 
draw $\dim_k\Irr_R(M_s,M_t)$ arrows from $M_s$ to $M_t$ \;$(s,t=0,1,\cdots,n)$. 
\end{dfn}

Note that $\dim_k\Irr_R(M_s,M_t)$ is equal to the multiplicity of $M_s$ in the indecomposable decomposition of $E_{M_t}$. 
So we describe the AR quiver from the structure of AR sequences. 
More fortunately, the AR quiver of $R$ coincides with the McKay quiver of $G$ by \cite{Aus1}, 
so we can describe it from representations of $G$ $($for the definition of McKay quiver, refer to \cite[$(10.3)$]{Yo}$)$. 
Note that finite subgroups of $\GL(2,k)$ which contain no pseudo-reflections except the identity are classified in \cite{Bri} 
and their McKay quiver (equivalently AR quiver) are described in \cite{AR1}. 
Moreover, if $G$ is a finite subgroup of $\SL(2,k)$, then the associated quivers take the form of extended Dynkin diagrams 
$($see also the beginning of Section\,\ref{DFsig_RDP}$)$.

%%%%%%%%%%%%%%%%%%%%%%%%
\subsection{Counting argument of Auslander-Reiten quiver}
\label{count_AR}

From Nakayama's lemma, when we discuss the surjectivity of ${}^eM_t\rightarrow M_t^{\oplus b}$, 
we may consider an MCM module $M_t$ as a vector space after tensoring by the residue field $k$. 
Thus, we investigate a basis of $M_t/\fkm M_t$, equivalently a set of minimal generators of $M_t$.

\medskip 

Let $M$ be a non-free indecomposable MCM $R$-module. 
The number of minimal generator $\mu_R(M)$ is equal to $\dim_kM/\fkm M$ and 

\begin{center}
\begin{tabular}{rcl}
$M$&$\cong$&$\Hom_R(R,M)$ \\
$\cup$&& \\
$\fkm M$&$\cong$&$\{R\overset{\text{non split}}{\rightarrow}R^{\oplus m}\rightarrow M\}$
\end{tabular}
\end{center}
for some $m\in \NN$. 
From this observation, we identify a minimal generator of $M$ with a morphism from $R$ to $M$ which doesn't 
factor through free modules except the starting point. 
We spend the rest of this subsection for describing such morphisms. 

\bigskip

Let $\CM(R)$ be the category of maximal CM $R$-modules. In order to find such morphisms, we define the stable category $\underline{\CM}(R)$ as follows. 
The objects of $\underline{\CM}(R)$ are same as those of $\CM(R)$ and the morphism set is given by 
\[
\underline{\Hom}_R(X,Y)\coloneqq\Hom_R(X,Y)\big/\calP(X,Y), \quad X,Y\in \CM(R)
\]
where $\calP(X,Y)$ is the submodule of $\Hom_R(X,Y)$ consisting of morphisms which factor through a free $R$-module.

\medskip

Assume that $R$ is not isomorphic to $\omega_R (\cong \tau R)$, that is, $R$ is not Gorenstein. 
Let 
\begin{equation}
\label{AR_M}
0\longrightarrow R\overset{g}{\longrightarrow}E\overset{f}{\longrightarrow}\tau^{-1}R\longrightarrow 0
\end{equation}
be the AR sequence ending in $\tau^{-1}R$. For the morphism of functor category 
\[
\underline{\Hom}_R(\tau^{-1}R,-)\overset{f\cdot -}{\longrightarrow}\underline{\Hom}_R(E,-) ,
\]
we define the covariant additive functor $\FF:\underline{\CM}(R)\rightarrow \calA$ as the cokernel of $(f\cdot -)$ 
\[
\underline{\Hom}_R(\tau^{-1}R,-)\overset{f\cdot -}{\longrightarrow}\underline{\Hom}_R(E,-)\longrightarrow \FF\longrightarrow 0 ,
\]
where $\calA$ is the category of abelian groups. It is easy to see $\Ker(f\cdot -)=0$. 
By properties of the AR sequence $($\ref{AR_M}$)$, any morphism $R\rightarrow M$ factors through $E$ and $gf=0$. 
Thus, on the short exact sequence  
\[
\underline{\Hom}_R(\tau^{-1}R,M)\overset{f\cdot -}{\longrightarrow}\underline{\Hom}_R(E,M)\longrightarrow \FF(M)\longrightarrow 0 ,
\]
the composition morphisms of $R\overset{g}{\rightarrow}E$ and non-zero elements of $\FF(M)$ are exactly what we wanted. 

\begin{rem}
In the case when $R$ is Gorenstein, we use the fundamental sequence 
\[
0\longrightarrow R\longrightarrow E \overset{f}{\longrightarrow}R\longrightarrow k\longrightarrow 0 
\]
instead of the AR sequence $($\ref{AR_M}$)$, and we obtain $\FF(M)\cong\underline{\Hom}_R(E,M)$ by similar arguments.
\end{rem}
In order to find non-zero elements of $\FF(M)$, we compute $\dim_k\FF(M)=\dim_k\underline{\Hom}_R(E,M)-\dim_k\underline{\Hom}_R(\tau^{-1}R,M)$. 
More precisely, we will find a $k$-basis of $\FF(M)$.  
For this purpose, the counting arguments of AR quiver plays a crucial role. 
This method first appeared in the work of Gabriel \cite{Gab} and it was also used for classifying special CM modules over quotient surface singularities \cite{IW}. 
For more details about the counting arguments of AR quiver, see e.g. \cite{Gab}, \cite{Iya}, \cite{IW}, \cite{Wem}. 
For simplicity, we give a brief review of this kind of arguments in the form of algorithm as follows $($cf. \cite[Section\,4]{Wem}$)$. 

\bigskip 

\begin{itemize}
\item [1.] In the AR quiver $Q$, we write a $1$ $($resp. $-1)$ at the position corresponding to $E$ $($resp. $\tau^{-1}R)$. 
For every MCM $R$-module $N$, we define the following number  
\begin{equation*}
\nu^{(0)}_N\coloneqq\lambda^{(0)}_N\coloneqq
\begin{cases}
1&\text{if $N=E$} \\
-1&\text{if $N=\tau^{-1}R$} \\
0&\text{otherwise}.
\end{cases}
\end{equation*}
\item [2.] Next, we consider all arrows out of $E$ in $Q$ and call the head of these arrows the first-step vertices of $E$. 
For every MCM $R$-module $N$, we set 
\begin{equation*}
\lambda^{(1)}_N\coloneqq
\begin{cases}
1+\nu^{(0)}_N&\text{if $N$ is a first-step vertex} \\
0&\text{otherwise}.
\end{cases}
\end{equation*}
Then we define 
\begin{equation*}
\nu^{(1)}_N\coloneqq
\begin{cases}
0&\text{if $N=R$} \\
\lambda^{(1)}_N&\text{otherwise}.
\end{cases}
\end{equation*}
For every first-step vertex $N_1$, we put the number $\lambda^{(1)}_{N_1}$ on the corresponding vertex. 
\item [3.] We consider all arrows out of the first-step vertices and call the head of these arrows the second-step vertices. 
For every MCM $R$-module $N$, we set 
\begin{equation*}
\lambda^{(2)}_N\coloneqq
\begin{cases}
-\nu^{(0)}_{\tau(N)}+\displaystyle\sum_{L_1\rightarrow N}\nu^{(1)}_{L_1}&\text{if $N$ is a second-step vertex} \\
0&\text{otherwise}.
\end{cases}
\end{equation*}
where $L_1$ runs over all first-step vertices. Then we define 
\begin{equation*}
\nu^{(2)}_N\coloneqq
\begin{cases}
0&\text{if $N=R$} \\
\lambda^{(2)}_N&\text{otherwise}.
\end{cases}
\end{equation*}
For every second-step vertex $N_2$, we write the corresponding number $\lambda^{(2)}_{N_2}$. 
\item [4.] Then we consider all arrows out of the second-step vertices. We call the head of these arrows the third-step vertices. 
For every MCM $R$-module $N$, we set 
\begin{equation*}
\lambda^{(3)}_N\coloneqq
\begin{cases}
-\nu^{(1)}_{\tau(N)}+\displaystyle\sum_{L_2\rightarrow N}\nu^{(2)}_{L_2}&\text{if $N$ is a third-step vertex} \\
0&\text{otherwise}.
\end{cases}
\end{equation*}
where $L_2$ runs over all second-step vertices. We set  
\begin{equation*}
\nu^{(3)}_N\coloneqq
\begin{cases}
0&\text{if $N=R$} \\
\lambda^{(3)}_N&\text{otherwise}.
\end{cases}
\end{equation*}
For every third-step vertex $N_3$, we write the corresponding number $\lambda^{(3)}_{N_3}$. 
\item [5.] Continuing with this process, we record the number $\lambda^{(i)}_N$ on each vertex $N$. 
Since $R$ is of finite CM representation type, we have $\lambda^{(i)}_N=0$ for some $i\in\NN$ sooner or later. Thus, we will stop there. 

The number $\lambda^{(i)}_N$ means that there are $\lambda^{(i)}_N$ non-zero morphisms in $\FF(N)$ for each corresponding vertex $N$, 
and such morphisms consist a $k$-basis of $\FF(N)$. Note that we have $\dim_k\FF(N)=\sum_{i\ge 0}\lambda^{(i)}_N$. 
\end{itemize}

\begin{ex}
\label{D_52_group}
Let $G$ be the following finite group 
\[
G\coloneqq \langle\;
    \begin{pmatrix} i&0 \\
                    0&-i 
    \end{pmatrix},\;
    \begin{pmatrix} 0&i \\
                    i&0 
    \end{pmatrix},\;
    \begin{pmatrix} \zeta_6&0 \\
                    0&\zeta_6 
    \end{pmatrix}           \;\rangle \subset\GL(2,k),
\]
where $\zeta_6$ is a primitive $6$-th root of unity. 
This group is isomorphic to $\calD_2\times Z_3$ where $Z_3$ is generated by the scalar matrix $\mathrm{diag}(\zeta_3,\zeta_3)$ and 
$\calD_2$ is the binary dihedral group of order $8$ $($see also the beginning of Section\,\ref{DFsig_RDP}$)$. 
Note that this group is denoted by $D_{5,2}$ in \cite{Rie}. 
Then we have finitely many irreducible representations 
\[
V_{i,j}\coloneqq V_i\otimes W_j \;\;(i=0,1,\cdots,4,\;j=0,1,2)
\]
where $W_j$ is a irreducible representation of $Z_3$ which represents $\mathrm{diag}(\zeta_3,\zeta_3)\mapsto\zeta_3^j$ and
$V_i$ is a that of $\calD_2$ associated to the extended Dynkin diagram
$\begin{array}{c}\tiny{\xymatrix@C=5pt@R=0.5pt{0\ar@{-}[rd]&&3\\ &2\ar@{-}[ru]\ar@{-}[rd]& \\1\ar@{-}[ru]&&4}}\end{array}$ 
and set the indecomposable MCM module $M_{i,j}\coloneqq (S\otimes_kV_{i,j})^G$. 
The AR quiver of $k[[x,y]]^G$ is the following $($for simplicity we only describe subscripts as vertices$)$;

{\scriptsize 
\[\xymatrix@C=8pt@R=8pt{
(0,0)\ar[rdd]&&(0,1)\ar[rdd]&&(0,2)\ar[rdd]&&(0,0) \\
(1,0)\ar[rd]&&(1,1)\ar[rd]&&(1,2)\ar[rd]&&(1,0) \\
&(2,2)\ar[ruu]\ar[ru]\ar[rdd]\ar[rd]&&(2,0)\ar[ruu]\ar[ru]\ar[rdd]\ar[rd]&&(2,1)\ar[ruu]\ar[ru]\ar[rdd]\ar[rd]& \\
(3,0)\ar[ru]&&(3,1)\ar[ru]&&(3,2)\ar[ru]&&(3,0) \\
(4,0)\ar[ruu]&&(4,1)\ar[ruu]&&(4,2)\ar[ruu]&&(4,0)
}\] }

where the left and right hand sides are identified and the vertex $(0,0)$ represents $R$ $($for more details, see \cite{AR1}$)$. 
In this case, we can see that $E=M_{2,2}, \,\tau^{-1}R=M_{0,1}$. $($Check the notation used in the above algorithm.$)$ 
By applying the counting argument to this quiver, we have the following. 

{\tiny 
\[
 \begin{array}{ccc}
 \begin{array}{c}
 \xymatrix@C=7pt@R=7pt{
 R\ar[rdd]&&-1\ar[rdd]&&\bullet\ar[rdd]&&R \\
 \bullet\ar[rd]&&\bullet\ar[rd]&&\bullet\ar[rd]&&\bullet \\
 &1\ar[ruu]\ar[ru]\ar[rdd]\ar[rd]&&\bullet\ar[ruu]\ar[ru]\ar[rdd]\ar[rd]&&\bullet\ar[ruu]\ar[ru]\ar[rdd]\ar[rd]& \\
 \bullet\ar[ru]&&\bullet\ar[ru]&&\bullet\ar[ru]&&\bullet \\
 \bullet\ar[ruu]&&\bullet\ar[ruu]&&\bullet\ar[ruu]&&\bullet}
 \end{array} &
 \begin{array}{c}
 \xymatrix@C=7pt@R=7pt{
 R\ar[rdd]&&0\ar[rdd]&&\bullet\ar[rdd]&&R \\
 \bullet\ar[rd]&&1\ar[rd]&&\bullet\ar[rd]&&\bullet \\
 &1\ar[ruu]\ar[ru]\ar[rdd]\ar[rd]&&\bullet\ar[ruu]\ar[ru]\ar[rdd]\ar[rd]&&\bullet\ar[ruu]\ar[ru]\ar[rdd]\ar[rd]& \\
 \bullet\ar[ru]&&1\ar[ru]&&\bullet\ar[ru]&&\bullet \\
 \bullet\ar[ruu]&&1\ar[ruu]&&\bullet\ar[ruu]&&\bullet} 
 \end{array} &
 \begin{array}{c}
 \xymatrix@C=7pt@R=7pt{
 R\ar[rdd]&&0\ar[rdd]&&\bullet\ar[rdd]&&R \\
 \bullet\ar[rd]&&1\ar[rd]&&\bullet\ar[rd]&&\bullet \\
 &1\ar[ruu]\ar[ru]\ar[rdd]\ar[rd]&&2\ar[ruu]\ar[ru]\ar[rdd]\ar[rd]&&\bullet\ar[ruu]\ar[ru]\ar[rdd]\ar[rd]& \\
 \bullet\ar[ru]&&1\ar[ru]&&\bullet\ar[ru]&&\bullet \\
 \bullet\ar[ruu]&&1\ar[ruu]&&\bullet\ar[ruu]&&\bullet}
 \end{array} \\
 &&\\
 \mbox{\scriptsize Step\,1}&\mbox{\scriptsize Step\,2}&\mbox{\scriptsize Step\,3}
\end{array}
\]  
\[
 \begin{array}{ccc}
 \begin{array}{c}
 \xymatrix@C=7pt@R=7pt{
 R\ar[rdd]&&0\ar[rdd]&&2\ar[rdd]&&R \\
 \bullet\ar[rd]&&1\ar[rd]&&1\ar[rd]&&\bullet \\
 &1\ar[ruu]\ar[ru]\ar[rdd]\ar[rd]&&2\ar[ruu]\ar[ru]\ar[rdd]\ar[rd]&&\bullet\ar[ruu]\ar[ru]\ar[rdd]\ar[rd]& \\
 \bullet\ar[ru]&&1\ar[ru]&&1\ar[ru]&&\bullet \\
 \bullet\ar[ruu]&&1\ar[ruu]&&1\ar[ruu]&&\bullet}
 \end{array} &
 \begin{array}{c}
 \xymatrix@C=7pt@R=7pt{
 R\ar[rdd]&&0\ar[rdd]&&2\ar[rdd]&&R \\
 \bullet\ar[rd]&&1\ar[rd]&&1\ar[rd]&&\bullet \\
 &1\ar[ruu]\ar[ru]\ar[rdd]\ar[rd]&&2\ar[ruu]\ar[ru]\ar[rdd]\ar[rd]&&3\ar[ruu]\ar[ru]\ar[rdd]\ar[rd]& \\
 \bullet\ar[ru]&&1\ar[ru]&&1\ar[ru]&&\bullet \\
 \bullet\ar[ruu]&&1\ar[ruu]&&1\ar[ruu]&&\bullet} 
 \end{array} &
 \begin{array}{c}
 \xymatrix@C=7pt@R=7pt{
 R\ar[rdd]&&0\ar[rdd]&&2\ar[rdd]&&R \\
 \bullet\ar[rd]&&1\ar[rd]&&1\ar[rd]&&2 \\
 &1\ar[ruu]\ar[ru]\ar[rdd]\ar[rd]&&2\ar[ruu]\ar[ru]\ar[rdd]\ar[rd]&&3\ar[ruu]\ar[ru]\ar[rdd]\ar[rd]& \\
 \bullet\ar[ru]&&1\ar[ru]&&1\ar[ru]&&2 \\
 \bullet\ar[ruu]&&1\ar[ruu]&&1\ar[ruu]&&2}  
 \end{array} \\
 &&\\
 \mbox{\scriptsize Step\,4}&\mbox{\scriptsize Step\,5}&\mbox{\scriptsize Step\,6}
\end{array}
\] }

Continuing with this process, finally we get to the following picture.  

{\scriptsize 
\[\xymatrix@C=8pt@R=8pt{
 R\ar[rdd]&&0\ar[rdd]&&2\ar[rdd]&&R\ar[rdd]&&3\ar[rdd]&&0\ar[rdd]&&R\ar[rdd]& \\
 0\ar[rd]&&1\ar[rd]&&1\ar[rd]&&2\ar[rd]&&1\ar[rd]&&2\ar[rd]&&1\ar[rd]& \\
 &1\ar[ruu]\ar[ru]\ar[rdd]\ar[rd]&&2\ar[ruu]\ar[ru]\ar[rdd]\ar[rd]&&3\ar[ruu]\ar[ru]\ar[rdd]\ar[rd]&&3\ar[ruu]\ar[ru]\ar[rdd]\ar[rd]&&3\ar[ruu]\ar[ru]\ar[rdd]\ar[rd]&&3\ar[ruu]\ar[ru]\ar[rdd]\ar[rd]&&0\\
 0\ar[ru]&&1\ar[ru]&&1\ar[ru]&&2\ar[ru]&&1\ar[ru]&&2\ar[ru]&&1\ar[ru]& \\
 0\ar[ruu]&&1\ar[ruu]&&1\ar[ruu]&&2\ar[ruu]&&1\ar[ruu]&&2\ar[ruu]&&1\ar[ruu]&
}\] } 

By extracting non-zero paths from the above quiver, we have the Figure\,\ref{D_52_quiver} where the exponent of each vertex implies the multiplicity.  

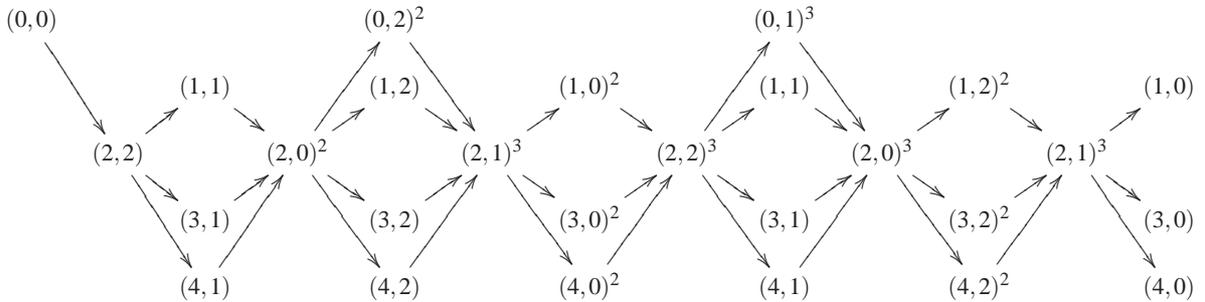
\begin{figure}[!h]
{\scriptsize 
\[\xymatrix@C=6pt@R=8pt{
(0,0)\ar[rdd]&&&&(0,2)^2\ar[rdd]&&&&(0,1)^3\ar[rdd]&&&& \\
&&(1,1)\ar[rd]&&(1,2)\ar[rd]&&(1,0)^2\ar[rd]&&(1,1)\ar[rd]&&(1,2)^2\ar[rd]&&(1,0) \\
&(2,2)\ar[ru]\ar[rdd]\ar[rd]&&(2,0)^2\ar[ruu]\ar[ru]\ar[rdd]\ar[rd]&&(2,1)^3\ar[ru]\ar[rdd]\ar[rd]&&(2,2)^3\ar[ruu]\ar[ru]\ar[rdd]\ar[rd]&&(2,0)^3\ar[ru]\ar[rdd]\ar[rd]&&(2,1)^3\ar[ru]\ar[rdd]\ar[rd]& \\
&&(3,1)\ar[ru]&&(3,2)\ar[ru]&&(3,0)^2\ar[ru]&&(3,1)\ar[ru]&&(3,2)^2\ar[ru]&&(3,0) \\
&&(4,1)\ar[ruu]&&(4,2)\ar[ruu]&&(4,0)^2\ar[ruu]&&(4,1)\ar[ruu]&&(4,2)^2\ar[ruu]&&(4,0)
}\] }
\caption{Composition $R\rightarrow M_{2,2}$  and non-zero elements of $\underline{\Hom}(M_{2,2},-)$ }
\label{D_52_quiver}
\end{figure}
\end{ex}

Thus, we can identify minimal generators of $M_{i,j}$ with non-zero paths from $R$ to $M_{i,j}$ on Figure\ref{D_52_quiver}. 
For example, $M_{1,1}$ has two minimal generators associated to the following two paths.

\[
\begin{array}{ccc}
{\scriptsize 
\xymatrix@C=4pt@R=5pt{
R\ar[rdd]&& \\
&&(1,1) \\
&\bullet\ar[ru]& \\
&&\bullet \\
&&\bullet
} }& 
\xymatrix@C=4pt@R=5pt{
&& \\
&& \\
&\text{and}& \\
&& \\
&&
} &
{\scriptsize 
\xymatrix@C=4pt@R=5pt{
R\ar[rdd]&&&&\bullet&&&&\bullet \\
&&\bullet&&\bullet\ar[rd]&&\bullet&&(1,1) \\
&\bullet\ar[rd]&&\bullet\ar[ru]&&\bullet\ar[rd]&&\bullet\ar[ru]& \\
&&\bullet\ar[ru]&&\bullet&&\bullet\ar[ru]&&\bullet \\
&&\bullet&&\bullet&&\bullet&&\bullet
} }
\end{array}
\]

\bigskip

In other words, a minimal generator of $R$ $($i.e. a unit of $R)$ generates those of $M_1$ by chasing the above paths.  
Of course, there are several paths from $R$ to $M_{1,1}$ not only the above ones. 
Since the AR quiver has relations originated from AR sequences, 
they generate the same minimal generator up to modulo radical. 
Furthermore, composing the irreducible morphism $M_{1,1}\rightarrow M_{2,0}$ and the above paths, 
we have a part of minimal generators of $M_{2,0}$. 

\begin{rem}
From the constructing method of the AR sequence, each arrow in the AR quiver is a degree $1$ map induced by the standard grading of $k[[x,y]]$. 
Therefore, the degree of each minimal generator of $M$ coincides with the associated path length from $R$ to $M$.
\end{rem}

%%%%%%%%%%%%%%%%%%%%%%%%%%%%%%%%%%%%%%%%%%%%%%%%%%%%%%%%%%%%%%%
\section{Key lemma for determining the dual $F$-signature}
\label{Sec_keylemma}

In order to investigate a surjection from ${}^eM_t$ to a finite direct sum of some copies of $M_t$, we will prepare a technical lemma. 

As we noted in the beginning of subsection\,\ref{count_AR}, we may consider an MCM $R$-module $M_t$ as a vector space. 
More precisely, let $M_i, M_j$ be indecomposable MCM modules and suppose a morphism $\varphi_i:M_i\rightarrow M_j$ is 
a non-zero path appearing in the AR quiver after applying the counting argument. Then, $\Image \varphi_i$ constructs part of the minimal generators of $M_j$.
Therefore in the commutative diagram 
 \[\xymatrix@C=10pt@R=10pt{
   M_i\ar[r]^{\varphi_i}\ar[dd]&M_j\ar[dd]^{\pi}\\
   &\\
   V_i\coloneqq \Image(\varphi_i\pi)\ar@{^{(}->}[r]&V_j\coloneqq M_j/\fkm M_j,  
}\]
we may consider $V_i$ as a vector subspace of $V_j$ and take a injective morphism 
\[
 X_i\cdot1_{V_i}:V_i\hookrightarrow V_j \quad(X_i\in k).
\]

Now we prove the key lemma related to these vector spaces in more general settings (Lemma\,\ref{key_lemma}).
Let $V$ be a $d$-dimensional $k$-vector space and fix a basis $\{v_1,\cdots,v_d\}$. 
Suppose $W_1,\cdots,W_r$ are subspaces of $V$ (admit repetition) where $\dim_kW_i=d_i\le d$ and 
the basis of $W_i$ is the part of $\{v_1,\cdots,v_d\}$. 
Namely, we choose $d_i$ elements from $\{v_1,\cdots,v_d\}$ as the basis of $W_i$.
Define the $d\times r$ table $[a_{ij}]$ associated with $W_i$'s as follows,
\begin{equation*}
a_{ij}=\begin{cases} 1&\text{(if $v_i$ is a basis of $W_j$)} \\
                     0&\text{(if $v_i$ is not a basis of $W_j$)} 
           \end{cases}
\end{equation*}
where $i=1,\cdots,d$ and $j=1,\cdots,r$.

\begin{lem}
\label{key_lemma}
Set $n\coloneqq \mathrm{min}\{\sum^r_{j=1}a_{ij}\;|\;i=1,\cdots,d\;\}\le r$, then 
there exists a surjection 
\[
 W_1\oplus\cdots\oplus W_r \twoheadlongrightarrow V^{\oplus n}.
\]
\end{lem}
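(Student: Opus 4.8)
The plan is to build the surjection $W_1\oplus\cdots\oplus W_r\twoheadrightarrow V^{\oplus n}$ coordinate by coordinate, reading off the combinatorial structure from the table $[a_{ij}]$. For each basis vector $v_i$ of $V$, the $i$-th row of the table tells us exactly which of the subspaces $W_j$ contain $v_i$; by definition of $n$, each row has at least $n$ entries equal to $1$. So I would first, for each $i=1,\dots,d$, choose a set $J_i\subseteq\{1,\dots,r\}$ with $|J_i|=n$ and $a_{ij}=1$ for all $j\in J_i$ (i.e. pick $n$ of the subspaces that contain $v_i$). Then I would enumerate $J_i=\{j_i^{(1)},\dots,j_i^{(n)}\}$ in some fixed order.

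Next I would define the map $\Phi\colon W_1\oplus\cdots\oplus W_r\to V^{\oplus n}=V^{(1)}\oplus\cdots\oplus V^{(n)}$ as follows. Write an element of the source as $(w_1,\dots,w_r)$ with $w_j\in W_j$, and expand each $w_j=\sum_{i:\,a_{ij}=1}c_{ij}v_i$ in terms of the chosen partial basis. For the $\ell$-th copy $V^{(\ell)}$ and the basis vector $v_i$, I would let the $v_i$-coefficient of the $V^{(\ell)}$-component of $\Phi(w_1,\dots,w_r)$ be $c_{i,\,j_i^{(\ell)}}$, i.e. the coefficient of $v_i$ coming from the $\ell$-th subspace in the list $J_i$. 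Concretely, $\Phi$ sends $(w_1,\dots,w_r)$ to the tuple whose $\ell$-th entry is $\sum_{i=1}^d c_{i,\,j_i^{(\ell)}}\,v_i$. This is clearly $k$-linear, being built from coordinate projections.

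To see that $\Phi$ is surjective, fix a target $(u^{(1)},\dots,u^{(n)})\in V^{\oplus n}$ with $u^{(\ell)}=\sum_i \beta_i^{(\ell)} v_i$. For each $i$ and each $\ell$, the index $j_i^{(\ell)}$ satisfies $a_{i,j_i^{(\ell)}}=1$, meaning $v_i$ is a basis vector of $W_{j_i^{(\ell)}}$; so we may prescribe the $v_i$-coefficient of $w_{j_i^{(\ell)}}$ to be $\beta_i^{(\ell)}$. The only thing to check is consistency: a single index $j$ may occur as $j_i^{(\ell)}$ for several pairs $(i,\ell)$, but since the $J_i$ are sets (no repetition within a fixed $i$), for each fixed $j$ and each $i$ with $j\in J_i$ there is a \emph{unique} $\ell=\ell(i,j)$ with $j_i^{(\ell)}=j$, so the coefficient of $v_i$ in $w_j$ is unambiguously assigned the value $\beta_i^{(\ell(i,j))}$. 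For indices $i$ with $a_{ij}=0$ there is no $v_i$-component of $W_j$ to assign, and no constraint arises. Setting all coefficients not otherwise constrained to $0$, we obtain a preimage, so $\Phi$ is onto.

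The only genuinely delicate point — and the place I would be most careful — is this bookkeeping of which $(i,\ell)$ map to a given column $j$: I must make sure the assignment of coefficients of $w_j$ is well-defined (no two requirements land on the same coefficient of the same $w_j$ with conflicting values) and that every required coefficient actually corresponds to a genuine basis vector of $W_j$. Both follow from the construction once the selection sets $J_i$ are fixed, but it is worth writing the dependency $\ell=\ell(i,j)$ out explicitly. Everything else is routine linear algebra; no deeper input (and in particular nothing about Frobenius, AR theory, or the module structure) is needed for this lemma — it is purely a statement about vector spaces spanned by subsets of a fixed basis.
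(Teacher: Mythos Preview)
Your proof is correct and more direct than the paper's. The paper builds the map $\varphi:W_1\oplus\cdots\oplus W_r\to V^{\oplus n}$ so that each block $W_j\to V^{(\alpha)}$ is a scalar multiple $X_j^{(\alpha)}\cdot 1_{W_j}$ of the inclusion, takes the $X_j^{(\alpha)}$ algebraically independent, and then proves surjectivity by exhibiting a nonzero $dn$-minor of the representing matrix via a greedy column selection and a leading-monomial argument. You instead route individual coordinates: for each $i$ you pick $n$ columns $j$ with $a_{ij}=1$ and send the $v_i$-coefficient of $w_j$ to a designated copy of $V$, after which surjectivity is immediate. Your argument is shorter and avoids determinants entirely. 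The one thing the paper's construction buys that yours does not is the \emph{shape} of the map: because each block $W_j\to V^{(\alpha)}$ is a scalar multiple of the full inclusion, it lifts to a scalar multiple of the module homomorphism $M_{s_j}\to M_t$ whose image in $M_t/\fkm M_t$ is $W_j$; in the applications of Section~\ref{DFsig_RDP} this is exactly what allows Nakayama's lemma to upgrade the vector-space surjection to a surjection of MCM modules. Your map projects each $W_j$ onto a proper coordinate subspace before including, and such projections need not arise from $R$-module maps. This is not a gap in your proof of the lemma as stated --- that is purely a vector-space assertion --- but it is the reason the paper takes the longer route.
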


\begin{proof}
Firstly, we define a $(dn)\times(dr)$ matrix 
{\small
\begin{equation*}
C=
\begin{pmatrix}
A^{(1)}_1&A^{(1)}_2&\cdots&A^{(1)}_r \\
A^{(2)}_1&A^{(2)}_2&\cdots&A^{(2)}_r \\
\hdotsfor{4}\\
A^{(n)}_1&A^{(n)}_2&\cdots&A^{(n)}_r 
\end{pmatrix}
\end{equation*}}
where $A^{(\alpha)}_j\;(1\le\alpha\le n,\;1\le j\le n)$ is a $d\times d$ diagonal matrix as follows.
{\small
\begin{equation*}
A^{(\alpha)}_j=X^{(\alpha)}_j
\begin{pmatrix}
a_{1j}&& \\
&a_{2j}& \\
&&\ddots \\
&&&a_{nj}
\end{pmatrix},
\quad \text{where} \;X^{(\alpha)}_j\in k.
\end{equation*} }
Especially, we can take $X^{(\alpha)}_j\;(1\le\alpha\le n,\;1\le j\le n)$ as algebraically independent variables.
Note that for the vector space $V^{\oplus n}=V^{(1)}\oplus\cdots\oplus V^{(n)}$ where $V^{(\alpha)}\cong V$ and linear morphisms 
$W_j\overset{\cdot X^{(\alpha)}_j}{\hookrightarrow} V^{(\alpha)}$, the matrix $C=(c_{st})$ is a representation matrix of 
$\varphi:W_1\oplus\cdots\oplus W_r \longrightarrow V^{\oplus n}$.
Thus, $\varphi$ is surjective if and only if there exists a nonzero $dn$-minor of $C$. 
From now on, we construct such a $dn$-minor. 

For this purpose, we choose $dn$ columns which are distinct from each other from $C$ and consider a sequence $(t_1,t_2,\cdots,t_{dn})$ 
where $1\le t_1,\cdots,t_{dn}\le dn$ are column numbers. 
From a sequence $(t_1,t_2,\cdots,t_{dn})$ of $C$, we obtain the monomial in a natural fashion, 
\[
(t_1,t_2,\cdots,t_{dn})\mapsto\prod^{dn}_{s=1}c_{s,t_s}\in \mathrm{Mon}(X^{(\alpha)}_j\;|\;1\le j\le r,\;1\le \alpha\le n)
\]
where $\mathrm{Mon}(X^{(\alpha)}_j)$ is the monomial set of $k[X^{(\alpha)}_j]$. 
We say a sequence $(t_1,t_2,\cdots,t_{dn})$ is chain if the corresponding monomial is not zero. 
We impose the lexicographic order
\begin{equation}
\label{monomial_order}
X^{(1)}_1>\cdots>X^{(1)}_r>X^{(2)}_1>\cdots>X^{(2)}_r>\cdots>X^{(n)}_1>\cdots>X^{(n)}_r
\end{equation}
on $\mathrm{Mon}(X^{(\alpha)}_j)$. 

From now on, we consider the chain of $C$ constructed from the following algorithm. 

\bigskip

\begin{itemize}
 \item [\textbf{(Step1)}] 
For the $d\times r$ table $[a_{ij}]$, if there is a number $i$ such that $a_{ij}=0$ for all $j=1,\cdots,r$, 
then we stop this operation $($Namely, if $n=0$ then we stop here$)$. Otherwise, we set 
\[
j^{(1)}_1\coloneqq \operatorname{min}\{j\;|\;a_{1j}=1\} \quad\text{and}\quad 
t_1\coloneqq 1+(j^{(1)}_1-1)d .
\]
After that, we replace the number $a_{1,j^{(1)}_1}=1$ by $0$. 

Similarly, we set 
\[
j^{(1)}_2\coloneqq \operatorname{min}\{j\;|\;a_{2j}=1\} \quad\text{and}\quad 
t_2\coloneqq 2+(j^{(1)}_2-1)d ,
\]
and replace $a_{2,j^{(1)}_2}=1$ by $0$. 
\begin{center}
$\vdots$
\end{center}
We set 
\[
j^{(1)}_d\coloneqq \operatorname{min}\{j\;|\;a_{dj}=1\} \quad\text{and}\quad 
t_d\coloneqq d+(j^{(1)}_d-1)d ,
\]
and replace $a_{d,j^{(1)}_d}=1$ by $0$, then we stop (Step1) here.
\begin{center}
$\bullet$ \\
$\bullet$ \\
$\bullet$
\end{center}
 \item [\textbf{(Step\;$\alpha$)}] 
For the $d\times r$ table $[a_{ij}]$, if there is a number $i$ such that $a_{ij}=0$ for all $j=1,\cdots,r$, 
then we stop this operation $($Namely, if $\alpha>n$, then we stop here$)$. Otherwise, we set 
\[
j^{(\alpha)}_1\coloneqq \operatorname{min}\{j\;|\;a_{1j}=1\} \quad\text{and}\quad 
t_{d(\alpha-1)+1}\coloneqq 1+(j^{(\alpha)}_1-1)d .
\]
replace $a_{1,j^{(\alpha)}_1}=1$ by $0$. 
\begin{center}
$\vdots$
\end{center}
We set 
\[
j^{(\alpha)}_d\coloneqq \operatorname{min}\{j\;|\;a_{dj}=1\} \quad\text{and}\quad 
t_{d(\alpha-1)+d}\coloneqq d+(j^{(\alpha)}_d-1)d ,
\]
and replace $a_{d,j^{(\alpha)}_d}=1$ by $0$, then we stop (Step\;$\alpha$) here.
\begin{center}
$\bullet$ \\
$\bullet$ \\
$\bullet$ \\
(repeat this process up to Step $n$) 
\end{center}
\end{itemize}

\medskip

By the definition of the number $n$, we can repeat this process up to $($Step\;$n)$. 
After that, we have $a_{i1}=\cdots=a_{ir}=0$ for some $i$.  
Therefore, we stop this algorithm. 

\bigskip

From the above operation, we obtain the sequence $(t_1,t_2,\cdots,t_{dn})$ and 
this sequence is clearly a chain by the construction method. 
Finally, we prove the following Claim\,\ref{key_lemma+} and complete the proof of Lemma\,\ref{key_lemma}.
\end{proof}

\begin{cla}
\label{key_lemma+}
The $dn$-minor $[t_1,t_2,\cdots,t_{dn}]$ of $C$ is non-zero.
\end{cla}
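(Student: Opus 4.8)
The goal is to show that the monomial $m \coloneqq \prod_{s=1}^{dn} c_{s,t_s}$ produced by the algorithm is, up to a nonzero scalar, distinct from every other monomial that can arise from a chain sequence $(t_1',\dots,t_{dn}')$ with $\{t_1',\dots,t_{dn}'\}$ a set of $dn$ distinct columns; since the $dn$-minor $[t_1,\dots,t_{dn}]$ is the sum over all such sequences (with signs) of their associated monomials and the $X^{(\alpha)}_j$ are algebraically independent, no cancellation can kill the term $m$, hence the minor is a nonzero polynomial in $k[X^{(\alpha)}_j]$. The natural way to isolate $m$ is to show it is the \emph{lexicographically largest} monomial occurring among all chains, with respect to the order (\ref{monomial_order}); then it survives as the leading term.

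First I would make precise which monomial the algorithm outputs: in (Step $\alpha$), row $i$ contributes the factor $X^{(\alpha)}_{j^{(\alpha)}_i}$ where $j^{(\alpha)}_i$ is the \emph{smallest} index $j$ for which the entry $a_{ij}$ has not yet been used in an earlier step. So the exponent vector of $m$ records, for each row $i$, the $n$ smallest column-indices $j$ with $a_{ij}=1$, assigned greedily to the levels $\alpha=1,\dots,n$ in increasing order. Next I would observe the structural constraint on \emph{any} chain: since $C$ is block-diagonal within each level in the sense that the $d\times d$ block $A^{(\alpha)}_j$ is a scalar times a $0/1$ diagonal matrix, a nonzero product $\prod c_{s,t_s}$ forces, for each level $\alpha$ and each row $i\in\{1,\dots,d\}$, exactly one chosen column lying in a block-column $j$ with $a_{ij}=1$; that is, every chain corresponds to a choice, for each row $i$, of an $n$-element subset $J_i \subseteq \{j : a_{ij}=1\}$ together with a bijection from $J_i$ to the levels $\{1,\dots,n\}$, and the $dn$ distinct-columns condition is automatic once the block positions are distinct.

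The heart of the argument is the leading-term comparison. Given any competing chain with data $(J_i', \text{bijection}_i')_{i}$, I would compare its monomial $m'$ to $m$ in the lex order (\ref{monomial_order}), which orders variables first by level $\alpha$ (smaller $\alpha$ is bigger) and then by column index $j$ (smaller $j$ is bigger). Scan levels $\alpha=1,2,\dots$ and within each level rows $i=1,\dots,d$ in order. At level $1$: the algorithm picks for row $1$ the globally smallest $j$ with $a_{1j}=1$, which is $\le$ any legal choice, so $m$'s level-$1$/row-$1$ variable is $\ge$ that of $m'$; if strictly bigger we're done, otherwise they agree and we move to row $2$, where the same greedy-minimality gives the comparison, and so on through all rows of level $1$, then into level $2$, etc. The subtlety is that after level $1$ the algorithm has "used up" certain (row, column) entries, so at level $2$ it picks the smallest \emph{remaining} $j$ for each row; I must check that a competing chain cannot do better at level $2$ by having been "wasteful" at level $1$ — but any such waste already made $m'$ strictly smaller at the earlier level, so the first point of disagreement always favors $m$. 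Making this induction clean — formalizing "first disagreement favors $m$" as a statement about the greedy choice dominating, level by level and row by row — is the main obstacle; the block structure and the explicit form of the order make it go through, but the bookkeeping of which entries remain available requires care. Once the leading-term claim is established, algebraic independence of the $X^{(\alpha)}_j$ finishes: the coefficient of $m$ in the minor is $\pm 1 \neq 0$, so $[t_1,\dots,t_{dn}] \neq 0$, which is Claim~\ref{key_lemma+}.
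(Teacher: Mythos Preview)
Your approach is the same as the paper's: show that the algorithm's monomial $m=\prod_{s}c_{s,t_s}$ is the unique lexicographically maximal nonzero term in the determinant expansion of the submatrix $D$, so that algebraic independence of the $X^{(\alpha)}_j$ forces $\det D\neq 0$. One clarification: the minor $[t_1,\ldots,t_{dn}]$ is a sum only over permutations of the \emph{fixed} column set $\{t_1,\ldots,t_{dn}\}$, not over all chains with arbitrary column choices, so the competing $(J_i',\text{bijection}_i')$ you need to beat have $J_i'$ forced equal to the algorithm's $J_i=\{j^{(1)}_i,\ldots,j^{(n)}_i\}$; your greedy level-by-level induction still applies (indeed proves something slightly stronger) and is precisely the justification the paper leaves implicit when it asserts maximality in one line.
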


\begin{proof}
Define $dn\times dn$-matrix $D=(d_{st})$ by choosing the columns $t_1,t_2,\cdots,t_{dn}$ from $C$.
By the definition of determinant 
\[
[t_1,t_2,\cdots,t_{dn}]=\operatorname{det}D=\sum_{\sigma\in\fkS_{dn}}(\operatorname{sgn}\sigma)d_{1,\sigma(1)}\cdots d_{dn,\sigma(dn)} ,
\]
where $\fkS_{dn}$ is a symmetric group of degree $dn$. 
From the selecting method of $t_1,\cdots,t_{dn}$, the monomial $0\neq\prod^{dn}_{s=1}c_{s,t_s}$ appears in the monomial set 
$\{d_{1,\sigma(1)}\cdots d_{dn,\sigma(dn)}\}_{\sigma\in\fkS_{dn}}$ and it is the unique maximal element with respect to the lexicographic order (\ref{monomial_order}). 
Furthermore, the algebraically independence of $X_j^{(\alpha)}s$ implies $\operatorname{det}D\neq 0$.
\end{proof}

\begin{ex}
Let $V$ be a $3$-dimensional vector space over $k$ and fix a basis $\{v_1,v_2,v_3\}$. 
Consider subspaces of $V$ ;
\[
W_1=<v_1,v_2>,\quad W_2=<v_2,v_3>,\quad W_3=<v_1>,\quad W_4=<v_1,v_3> .
\]
\begin{center}
$[a_{ij}]=$
 {\scriptsize\begin{tabular}{r|cccc} 
 &$W_1$&$W_2$&$W_3$&$W_4$ \\ \hline
 $v_1$&$1$&$0$&$1$&$1$ \\
 $v_2$&$1$&$1$&$0$&$0$ \\
 $v_3$&$0$&$1$&$0$&$1$ \\
 \end{tabular}}
\end{center}

By Lemma\,\ref{key_lemma}, we have a surjection $W_1\oplus W_2\oplus W_3\oplus W_4 \twoheadrightarrow V^{\oplus 2}$. 

Note that $(t_1,\cdots,t_6)=(1,2,6,7,5,12)$ and $\prod^6_{s=1}c_{s,t_s}$ is just the product of underlined entries of $C$.
 {\footnotesize\[
C=\left(
 \begin{array}{@{\,}ccc|ccc|ccc|ccc@{\,}}
  \underline{X^{(1)}_1}&&&0&&&X^{(1)}_3&&&X^{(1)}_4&& \\
 &\underline{X^{(1)}_1}&&&X^{(1)}_2&&&0&&&0& \\ 
 &&0&&&\underline{X^{(1)}_2}&&&0&&&X^{(1)}_4 \\ [5pt] \hline
 \rule[5pt]{0pt}{15pt}
 X^{(2)}_1&&&0&&&\underline{X^{(2)}_3}&&&X^{(2)}_4&& \\
 &X^{(2)}_1&&&\underline{X^{(2)}_2}&&&0&&&0& \\
 &&0&&&X^{(2)}_2&&&0&&&\underline{X^{(2)}_4}
 \end{array}
\right)\]}
\end{ex}

%%%%%%%%%%%%%%%%%%%%%%%%%%%%%%%%%%%%%%%%%%%%%%%%%%%%%%%%%%%%%%%
\section{Dual $F$-signature over rational double points}
\label{DFsig_RDP}

Firstly, we recall some well-known facts about two-dimensional rational double points.
We suppose that $G$ is a finite subgroup of $\SL(2,k)$ and the order of $G$ is coprime to $p=\mathrm{char}\,k$. 
We denote the invariant subring of $S\coloneqq k[[x,y]]$ under the action of $G$ by $R\coloneqq S^G$ and the maximal ideal of $R$ by $\fkm$. 
In this situation, the invariant subring $R$ is Gorenstein by \cite{Wat}. 
We call $R$ $($or equivalently $\Spec R)$ rational double points (or Du Val singularities, Kleinian singularities, ADE singularities in the literature). 
Moreover, we can see that $G$ contains no pseudo-reflections except the identity in this situation. Thus, the results in Section\,\ref{Gene_Fsig} hold for $R$ and 
it is well known that a finite subgroup of $\SL(2,k)$ is 
conjugate to one of the following finite groups (e.g. \cite[Chapter 10]{Yo}); 
\begin{equation}
\label{sub_SL2}
 \begin{array}{cl}
  (A_n):&\text{the cyclic group of order $n+1$} \;\;(n\ge 1) \\
&\quad\calC_{n+1}\coloneqq \langle\;
    \begin{pmatrix} \zeta_{n+1}&0 \\
                    0&\zeta_{n+1}^{-1} 
    \end{pmatrix}           \;\rangle \\
  (D_n):&\text{the binary dihedral group of order $4(n-2)$} \;\;(n\ge 4) \\
&\quad\calD_{n-2}\coloneqq \langle\;
   \calC_{2(n-2)},\; 
    \begin{pmatrix} 0&\zeta_4 \\
                    \zeta_4&0
    \end{pmatrix}         \;\rangle \\ 
  (E_6):&\text{the binary tetrahedral group of order $24$}  \\
&\quad\calT \coloneqq \langle\;
    \displaystyle\frac{1}{\sqrt{2}}\begin{pmatrix} \zeta_8&\zeta_8^3 \\
                    \zeta_8&\zeta_8^7
    \end{pmatrix},\; \calD_2        \;\rangle \\
  (E_7):&\text{the binary octahedral group of order $48$}  \\
&\quad\calO \coloneqq \langle\;
    \begin{pmatrix} \zeta_8^3&0 \\
                    0&\zeta_8^5 
    \end{pmatrix},\; \calT    \;\rangle \\
  (E_8):&\text{the binary icosahedral group of order $120$} \\
&\quad\calI \coloneqq \langle\;
    \displaystyle\frac{1}{\sqrt{5}}\begin{pmatrix} \zeta_5^4-\zeta_5&\zeta_5^2-\zeta_5^3 \\
                    \zeta_5^2-\zeta_5^3&\zeta_5-\zeta_5^4
    \end{pmatrix},\;
    \displaystyle\frac{1}{\sqrt{5}}\begin{pmatrix} \zeta_5^2-\zeta_5^4&\zeta_5^4-1 \\
                    1-\zeta_5&\zeta_5^3-\zeta_5
    \end{pmatrix}
\;\rangle 
 \end{array}
\end{equation}
where $\zeta_n$ is a primitive $n$-th root of unity. 

\medskip

Moreover, the AR quiver of $R$ coincides with the extended Dynkin diagram corresponding to the above types 
after replacing each edges $``-"$ by arrows $``\leftrightarrows"$.

Therefore the Auslander-Reiten quiver of $R$ is the left hand side of the following, 
{\scriptsize 
\begin{equation*}
\label{ARquiver_AED}
\xymatrix@C=12pt@R=10pt{
 &&&&&0\ar@<0.3ex>[rrrdd]\ar@<0.3ex>[lllldd]&&& & &&&&1\ar@{-}[rrrdd]\ar@{-}[lllldd]&&& \\
 (A_n) &&&&&&&& & &&&&&&& \\
 &1\ar@<0.3ex>[r]\ar@<0.3ex>[rrrruu]&2\ar@<0.3ex>[l]\ar@<0.3ex>[r]&3\ar@<0.3ex>[r]\ar@<0.3ex>[l]&\ar@<0.3ex>[l]\ar@{..}[r]&\ar@<0.3ex>[r]&n-2\ar@<0.3ex>[r]\ar@<0.3ex>[l]&n-1\ar@<0.3ex>[r]\ar@<0.3ex>[l]&n\ar@<0.3ex>[l]\ar@<0.3ex>[llluu] &
 1\ar@{-}[r]&1\ar@{-}[r]&1\ar@{-}[r]&\ar@{..}[r]&\ar@{-}[r]&1\ar@{-}[r]&1\ar@{-}[r]&1 \\
 &0\ar@<0.3ex>[rd]&&&&&&n-1\ar@<0.3ex>[ld] && 1\ar@{-}[rd]&&&&&&1\ar@{-}[ld]\\
 (D_n)&&2\ar@<0.3ex>[r]\ar@<0.3ex>[lu]\ar@<0.3ex>[ld]&3\ar@<0.3ex>[r]\ar@<0.3ex>[l]&\ar@{..}[r]\ar@<0.3ex>[l]&\ar@<0.3ex>[r]&n-2\ar@<0.3ex>[l]\ar@<0.3ex>[ru]\ar@<0.3ex>[rd] &&&
 &2\ar@{-}[r]&2\ar@{-}[r]&\ar@{..}[r]&\ar@{-}[r]&2 \\
 &1\ar@<0.3ex>[ru]&&&&&&n\ar@<0.3ex>[lu] && 1\ar@{-}[ru]&&&&&&1\ar@{-}[lu] \\
 &&&0\ar@<0.3ex>[d] &&&&& & &&1\ar@{-}[d]\\
 (E_6)&&&1\ar@<0.3ex>[u]\ar@<0.3ex>[d]&&&&& & &&2\ar@{-}[u]\ar@{-}[d]\\
 &5\ar@<0.3ex>[r]&3\ar@<0.3ex>[r]\ar@<0.3ex>[l]&2\ar@<0.3ex>[r]\ar@<0.3ex>[u]\ar@<0.3ex>[l]&4\ar@<0.3ex>[r]\ar@<0.3ex>[l]&6\ar@<0.3ex>[l] &&& &
 1\ar@{-}[r]&2\ar@{-}[r]&3\ar@{-}[r]&2\ar@{-}[r]&1\\
 &&&&7\ar@<0.3ex>[d] &&&& & &&&2\ar@{-}[d]\\
 (E_7)&0\ar@<0.3ex>[r]&1\ar@<0.3ex>[r]\ar@<0.3ex>[l]&2\ar@<0.3ex>[r]\ar@<0.3ex>[l]&3\ar@<0.3ex>[r]\ar@<0.3ex>[u]\ar@<0.3ex>[l]&4\ar@<0.3ex>[r]\ar@<0.3ex>[l]&5\ar@<0.3ex>[r]\ar@<0.3ex>[l]&6\ar@<0.3ex>[l]& &
 1\ar@{-}[r]&2\ar@{-}[r]&3\ar@{-}[r]&4\ar@{-}[r]&3\ar@{-}[r]&2\ar@{-}[r]&1\\
 &&&&&&8\ar@<0.3ex>[d]&& & &&&&&3\ar@{-}[d]\\
 (E_8)&0\ar@<0.3ex>[r]&1\ar@<0.3ex>[r]\ar@<0.3ex>[l]&2\ar@<0.3ex>[r]\ar@<0.3ex>[l]&3\ar@<0.3ex>[r]\ar@<0.3ex>[l]&4\ar@<0.3ex>[r]\ar@<0.3ex>[l]&5\ar@<0.3ex>[r]\ar@<0.3ex>[u]\ar@<0.3ex>[l]&6\ar@<0.3ex>[r]\ar@<0.3ex>[l]&7\ar@<0.3ex>[l]&
 1\ar@{-}[r]&2\ar@{-}[r]&3\ar@{-}[r]&4\ar@{-}[r]&5\ar@{-}[r]&6\ar@{-}[r]&4\ar@{-}[r]&2
} 
\end{equation*} }

where a vertex $t$ corresponds the MCM $R$-module $M_t$ and the right hand side of the figure means $\rank_RM_t$.

\begin{rem}
\label{rem_multi}
From Corollary \ref{gene-Fsig-cor}, we may consider as
\[
{}^eM_t\approx (R^{\oplus d_{0,t}}\oplus M_1^{\oplus d_{1,t}}\oplus\cdots\oplus M_n^{\oplus d_{n,t}})^{\oplus\frac{p^{2e}}{|G|}},
\]
where $d_{i,t}=(\rank_RM_t)\cdot(\rank_RM_i)$. 
When we try to determine the dual $F$-signature, the part of $o(p^{2e})$ is harmless. 
Therefore, we identify ${}^eM_t$ with $R^{\oplus d_{0,t}/|G|}\oplus M_1^{\oplus d_{1,t}/|G|}\oplus\cdots\oplus M_n^{\oplus d_{n,t}/|G|}$ and
sometimes omit $|G|^{-1}$ for simplicity. 

%and if we determine the maximal number $b\in\RR$ which gives a surjection 
%\[
%R^{\oplus d_{0,t}}\oplus M_1^{\oplus d_{1,t}}\oplus\cdots\oplus M_n^{\oplus d_{n,t}}\twoheadrightarrow M_t^{\oplus b}, 
%\]
%then the dual $F$-signature of $M_t$ is $s(M_t)=\displaystyle\frac{b}{|G|}$. 
For reasons of showing the ratio of $s(M_t)$ to $|G|$ clearly, we don't reduce a fraction. 
\end{rem}

In order to determine the value of the dual $F$-signature, we need understand the paths which generate minimal generators 
by applying the counting argument of AR quiver.
As the counting argument written below shows, the number of minimal generators of $M_t$ is equal to $m_t\coloneqq 2\rank_RM_t$
(see also \cite[Theorem\,1.2]{Wun2}).
We denote minimal generators of $M_t$ by $g_{t,1},g_{t,2},\cdots,g_{t,m_t}$ 
and assume $\deg g_{t,1}\le\deg g_{t,2}\le\cdots\le\deg g_{t,m_t}$.

%%%A_n%%%
\subsection{Type $A_n$} 
\label{type_An}

The author determined the dual $F$-signature of $A_n$ type in \cite[Example\,3.11]{Nak} as follows. 

\begin{equation*}
s(M_t)=\begin{cases} \displaystyle\frac{t+1}{n+1}&\text{(if \;$t<\frac{n+1}{2}$)} \\
                         &\\
                         \displaystyle\frac{2t+1}{2(n+1)}&\text{(if \;$t=\frac{n+1}{2}$)} \\
                         &\\
                         \displaystyle\frac{n-t+2}{n+1}&\text{(if \;$t>\frac{n+1}{2}$)}. 
           \end{cases}
\end{equation*}

%%%D_n%%%
\subsection{Type $D_n$}
\label{type_Dn} 

Firstly, we show a method for determining the dual $F$-signatures in the case of type $D_5$ as an example. 
This method also applies to other cases. 

\medskip

\begin{ex}
The binary dihedral group $
G\coloneqq\calD_3= {\small\langle\;
    \begin{pmatrix} \zeta_6&0 \\
                    0&\zeta_6^{-1} 
    \end{pmatrix},\; 
    \begin{pmatrix} 0&\zeta_4 \\
                    \zeta_4&0
    \end{pmatrix}         \;\rangle} $ 
is the type $D_5$ in the list\;$($\ref{sub_SL2}$)$ and $|G|=12$. 
For the invariant subring under the action of $G$, the AR quiver takes the form as follows. 
{\small 
\[\xymatrix@C=20pt@R=15pt{
 0\ar@<0.3ex>[rd]^a&&&4\ar@<0.3ex>[ld]^D\\
 &2\ar@<0.3ex>[r]^C\ar@<0.3ex>[lu]^A\ar@<0.3ex>[ld]^B&3\ar@<0.3ex>[l]^c\ar@<0.3ex>[ru]^d\ar@<0.3ex>[rd]^e \\
 1\ar@<0.3ex>[ru]^b&&&5\ar@<0.3ex>[lu]^E \\
}\]}
It has the relations; 
\begin{equation}
\label{relation_D5}
\begin{cases}
aA=0,& cC+dD+eE=0, \\
bB=0,& Dd=0, \\
Aa+Bb+Cc=0,& Ee=0.
\end{cases}
\end{equation}

Rewriting this quiver as a repetition of the original one shown in dotted areas. 
Namely, we associate the translation quiver $\ZZ D_5$. $($The meaning of $\ZZ D_5$, see \cite{Gab}.$)$ 

{\small 
\[\xymatrix@C=15pt@R=15pt{
 0\ar[rd]^a\ar@{.}[d]&&0\ar[rd]\ar@{.}[d]&&0\ar[rd]&&0\ar[rd]&&0 \\
 1\ar[r]^b\ar@{.}[d]&2\ar[ru]^A\ar[r]^B\ar[rd]^C&1\ar[r]\ar@{.}[d]&2\ar[ru]\ar[r]\ar[rd]&1\ar[r]&2\ar[ru]\ar[r]\ar[rd]&1\ar[r]&2\ar[ru]\ar[r]\ar[rd]&1 \\
 3\ar[ru]^c\ar[r]_d\ar[rd]_e\ar@{.}[d]&4\ar[r]_D&3\ar[ru]\ar[r]\ar[rd]\ar@{.}[d]&4\ar[r]&3\ar[ru]\ar[r]\ar[rd]&4\ar[r]&3\ar[ru]\ar[r]\ar[rd]&4\ar[r]&3 \\
 &5\ar[ru]_E&&5\ar[ru]&&5\ar[ru]&&5\ar[ru]&
}\] }

After applying the counting argument $($cf. subsection\,\ref{count_AR}$)$, we have  

{\small 
\begin{equation}
\label{D5_path}
\xymatrix@C=15pt@R=15pt{
 0\ar[rd]&&&&&&& \\
 &2\ar[r]\ar[rd]&1\ar[r]&2\ar[rd]&&2\ar[r]\ar[rd]&1\ar[r]&2 \\
 &&3\ar[ru]\ar[r]\ar[rd]&4\ar[r]&3^2\ar[ru]\ar[r]\ar[rd]&4\ar[r]&3\ar[ru]& \\
 &&&5\ar[ru]&&5\ar[ru]&&
}
\end{equation} }

Thus, we identify paths on this quiver with minimal generators of each MCM module $M_t$. 

\bigskip

By using this, we will determine the dual $F$-signature of $M_1$ and $M_3$ as an example.

\medskip

\begin{itemize}
 \item[] \underline{\textbf{$\cdot$ the case of $M_1$ in $D_5$}\;}

 \medskip

 Since $\rank_RM_1=1$, the multiplicity $d_{t,1}$ of $M_t$ in ${}^eM_1$ is the following. 
Note that we consider them on the order of $p^{2e}$ and omit $p^{2e}/|G|$ times for simplicity $($see Remark\,\ref{rem_multi}$)$.
{\footnotesize 
 \begin{equation}
 \label{D5_M1_multi}
 \begin{tabular}{c|cccccc} 
   $M_t$&$R$&$M_1$&$M_2$&$M_3$&$M_4$&$M_5$ \\ \hline
   $d_{t,1}$&$1$&$1$&$2$&$2$&$1$&$1$ \\ 
 \end{tabular}
 \end{equation}}
Firstly, $R$ generates a minimal generator $g_{1,1}$ through the path $(R\xrightarrow{aB} M_1)$ on the quiver $(\ref{D5_path})$. 
Similarly, the paths $(M_1\xrightarrow{1_{M_1}}M_1)$ and $(M_2\xrightarrow{B}M_1)\times 2$ also generate $g_{1,1}$ $($Since $d_{2,1}=2$, we double the last one$)$ 
and we have no other such a path. 
Thus, the dual $F$-signature of $M_1$ can take 
$s(M_1)\le\frac{1}{12}+\frac{1}{12}+\frac{2}{12}=\frac{4}{12}$. 
So we obtain the upper bound of $s(M_1)$. Next, we will show that we can actually construct a surjection  
\[
R\oplus M_1\oplus M_2^{\oplus 2}\oplus M_3^{\oplus 2}\oplus M_4\oplus M_5\twoheadrightarrow M_1^{\oplus 4}.
\]
So if there exists such a surjection, 
then we can conclude $s(M_1)=\displaystyle\frac{4}{12}$.

From the quiver $($\ref{D5_path}$)$, we read off that $(M_2\xrightarrow{B}M_1)\times 2$ and $(M_3\xrightarrow{cB}M_1)\times 2$ generate $g_{1,2}$. 
Thus, we have the following table. As a consequence, we have the above surjection by Lemma\,\ref{key_lemma} and conclude $s(M_1)=\displaystyle\frac{4}{12}$. 
Note that a construction method of a surjection is not unique. It depends on a choice of paths.  
 {\footnotesize\[
 \begin{tabular}{c|cccccc|c} 
   $M_t$&$R$&$M_1$&$M_2$&$M_3$&$M_4$&$M_5$&Total \\ \hline
   Path&$a$&$1_{M_1}$&$B$&$cB$&$0$&$0$& \\ \hline
   $g_{1,1}$&$1$&$1$&$2$&$0$&$0$&$0$&$4$ \\
   $g_{1,2}$&$0$&$1$&$2$&$2$&$0$&$0$&$5$
 \end{tabular}
 \]}

\bigskip

 \item[] \underline{\textbf{$\cdot$ the case of $M_3$ in $D_5$}\;}

 \medskip

The strategy for determining $s(M_3)$ is the same as the case of $M_1$, 
but we need to pay attention to the central vertex $``3^2"$. As the multiplicity $2$ shows, paths from $R$ to this vertex 
could generate two kinds of minimal generator. 
Suppose that $\alpha\; ($resp. $\beta,\gamma)$ is a minimal generator of $M_3$ generated by a path which factor through 
$2\xrightarrow{C}3^2\; ($resp. $4\xrightarrow{D}3^2, 5\xrightarrow{E}3^2)$. 
By the relations $($\ref{relation_D5}$)$, they satisfy $\alpha+\beta+\gamma\in \fkm$ and we can take two of them as minimal generators 
associated to the vertex $3^2$. 
Thus, we fix $g_{3,2}\coloneqq\alpha,\; g_{3,3}\coloneqq\beta$. Since $\gamma$ is equivalent to $\alpha+\beta$ up to modulo radical, 
we use it freely as one of $\{\alpha, \beta\}$. 
Note that when we continue chasing a path after this vertex, we must not choose the following three paths, 
because the relations $($\ref{relation_D5}$)$ force them to be zero.
{\footnotesize 
\[\xymatrix@C=15pt@R=2pt{
 \ar[rd]^(0.38)C&&& & && & &3^2\ar[rd]^e& \\
 &3^2\ar[ru]^c&& & \ar[r]^(0.38)D&3^2\ar[r]^d& & \ar[ru]^(0.38)E&&
} \]}

 Since $\rank_RM_3=2$, the multiplicity $d_{t,3}$ of $M_t$ in ${}^eM_3$ is the following. 
{\footnotesize 
 \begin{equation}
 \label{D5_M3_multi}
 \begin{tabular}{c|cccccc} 
   $M_t$&$R$&$M_1$&$M_2$&$M_3$&$M_4$&$M_5$ \\ \hline
   $d_{t,3}$&$2$&$2$&$4$&$4$&$2$&$2$ \\ 
 \end{tabular}
 \end{equation}}
In order to estimate the upper bounds of $s(M_3)$, we consider paths which can be identified with $g_{3,1}$ or $g_{3,3}$. 
Then we classify each MCM modules as follows. 
\[
(\rmI)\;\{\;M_3\times 4\;\}\quad(\rmI\rmI)\;\{\;R\times 2, M_2\times 4\;\}\quad(\rmI\rmI\rmI)\;\{\;M_4\times 2,M_5\times 2\;\}
\] 
The MCM modules in the class of $(\rmI)$ generate the both $g_{3,1}$ and $g_{3,3}$ at the same time by $(M_3\xrightarrow{1_{M_3}}M_3)$ and 
those of $(\rmI\rmI)$ generate either $g_{3,1}$ or $g_{3,3}$. Also, those of $(\rmI\rmI\rmI)$ only generate $g_{3,3}$. 
In order to construct enough surjections, we should combine MCM modules in $(\rmI\rmI)$ and $(\rmI\rmI\rmI)$, that is, 
we use $(\rmI\rmI)$'s for $g_{3,1}$ and $(\rmI\rmI\rmI)$'s for $g_{3,3}$.
After making an appropriate pair of them $($we can make four pairs$)$, we have two remaining MCM modules in $(\rmI\rmI)$. 
We can use a one of remainders for $g_{3,1}$ and the other for $g_{3,3}$. 

Thus, the dual $F$-signature of $M_3$ can take 
$s(M_3)\le\frac{4}{12}+\frac{4}{12}+\frac{1}{12}=\frac{9}{12}$ and the following table and Lemma\,\ref{key_lemma} asserts equality 
$($in this table, we use $5\xrightarrow{E}3^2$ for generating $g_{3,3}$ $)$. 
 {\footnotesize\[
 \begin{tabular}{c|ccccccc|c} 
   $M_t$&$R$&$R$&$M_1$&$M_2$&$M_3$&$M_4$&$M_5$&Total \\ \hline
   Path&$aC$&$aCdD$&$bC$&$C$&$1_{M_3}$&$D$&$E$& \\ \hline
   $g_{3,1}$&$1$&$0$&$0$&$4$&$4$&$0$&$0$&$9$ \\
   $g_{3,2}$&$0$&$0$&$2$&$4$&$4$&$0$&$0$&$10$ \\
   $g_{3,3}$&$0$&$1$&$0$&$0$&$4$&$2$&$2$&$9$ \\
   $g_{3,4}$&$0$&$0$&$0$&$4$&$4$&$2$&$2$&$12$ \\
 \end{tabular}
 \]}
\end{itemize}

%\medskip

By using a similar method, we have the dual $F$-signature of other MCM modules. 
The following is the value of the dual $F$-signature corresponding to the Dynkin diagram $D_5$. 
{\scriptsize 
\[\xymatrix@C=15pt@R=5pt{
&&&&4& & &&&\displaystyle\frac{6}{12} \\
(D_5):&1\ar@{-}[r]&2\ar@{-}[r]&3\ar@{-}[ru]\ar@{-}[rd]& & 
&\displaystyle\frac{4}{12}\ar@{-}[r]&\displaystyle\frac{6}{12}\ar@{-}[r]&\displaystyle\frac{9}{12}\ar@{-}[ru]\ar@{-}[rd]& \\
&&&&5& & &&&\displaystyle\frac{6}{12}
} \]}
\end{ex}

Now, we move to the case of type $D_n$ while referring to the type $D_5$. 
Since the basic idea of determining the dual $F$-signature is the same as above, we only mention an outline for the case of $D_n$ 
and also for $E_6,E_7$ and $E_8$ (see subsection\,\ref{type_E6}, \ref{type_E7} and \ref{type_E8}).   

\medskip

The AR quiver of type $D_n$ is the following.
{\small 
\[\xymatrix@C=15pt@R=10pt{
 &0\ar@<0.3ex>[rd]&&&&&&&&&n-1\ar@<0.3ex>[ld]\\
 (D_n)&&2\ar@<0.3ex>[r]\ar@<0.3ex>[lu]\ar@<0.3ex>[ld]&3\ar@<0.3ex>[r]\ar@<0.3ex>[l]&\ar@{..}[r]\ar@<0.3ex>[l]&\ar@<0.3ex>[r]&m\ar@<0.3ex>[l]\ar@<0.3ex>[r]&\ar@{..}[r]\ar@<0.3ex>[l]&\ar@<0.3ex>[r] &n-2\ar@<0.3ex>[l]\ar@<0.3ex>[ru]\ar@<0.3ex>[rd] \\
 &1\ar@<0.3ex>[ru]&&&&&&&&&n\ar@<0.3ex>[lu] 
}\] }

We rewrite it as a repetition of the original one $($i.e. a translation quiver $\ZZ D_n)$.
\[
 \begin{array}{ccc}
 \mbox{$n$ : even\;$(n=2r)$}&&\mbox{$n$ : odd\;$(n=2r-1)$} \\
 \begin{array}{c}
{\footnotesize 
\xymatrix@C=10pt@R=10pt{
0\ar[rd]^a&&0\ar[rd]&&0 \\
1\ar[r]^b&2\ar[ru]^A\ar[r]^B\ar[rd]_{\psi_3}&1\ar[r]&2\ar[ru]\ar[r]\ar[rd]&1  \\
3\ar[ru]_{\varphi_3}\ar[rd]^{\varphi_4}&&3\ar[ru]\ar[rd]&&3  \\
&4\ar[ru]^{\psi_4}\ar[rd]^{\psi_5}&&4\ar[ru]\ar[rd]&  \\
5\ar[ru]^{\varphi_5}&&5\ar[ru]&&5  \\
&\vdots&&\vdots&  \\
n-3\ar[rd]^{\varphi_{n-2}}&&n-3\ar[rd]&&n-3  \\
n-1\ar[r]^c&n-2\ar[ru]^{\psi_{n-2}}\ar[r]^C\ar[rd]_D&n-1\ar[r]&n-2\ar[ru]\ar[r]\ar[rd]&n-1  \\
n\ar[ru]_d&&n\ar[ru]&&n  
} }
 \end{array} &
 \begin{array}{c}
{\footnotesize 
\xymatrix@C=3pt@R=15pt{
\ar@{.}[dddddddddd] \\
\\
\\
\\
\\
\\
\\
\\
\\
\\
\\
} }
 \end{array} &
 \begin{array}{c}
{\footnotesize 
\xymatrix@C=10pt@R=10pt{
0\ar[rd]^a&&0\ar[rd]&&0 \\
1\ar[r]^b&2\ar[ru]^A\ar[r]^B\ar[rd]_{\psi_3}&1\ar[r]&2\ar[ru]\ar[r]\ar[rd]&1  \\
3\ar[ru]_{\varphi_3}\ar[rd]^{\varphi_4}&&3\ar[ru]\ar[rd]&&3  \\
&4\ar[ru]^{\psi_4}\ar[rd]^{\psi_5}&&4\ar[ru]\ar[rd]&  \\
5\ar[ru]^{\varphi_5}&&5\ar[ru]&&5  \\
\ar[rd]^{\varphi_{n-3}}&\vdots&\ar[rd]&\vdots&  \\
&n-3\ar[rd]^{\psi_{n-2}}\ar[ru]^{\psi_{n-3}}&&n-3\ar[rd]\ar[ru]&  \\
n-2\ar[ru]^{\varphi_{n-2}}\ar[r]^C\ar[rd]_D&n-1\ar[r]^c&n-2\ar[ru]\ar[r]\ar[rd]&n-1\ar[r]&n-2  \\
&n\ar[ru]_d&&n\ar[ru]&  
} }
 \end{array} 
\end{array}
\] 
Also, this quiver has the relations.
\[
\begin{array}{ccc}
 \mbox{$n$ : even\;$(n=2r)$}&&\mbox{$n$ : odd\;$(n=2r-1)$} \\
{\footnotesize
\begin{array}{c}
\begin{cases}
aA=0,& bB=0, \\
Aa+Bb+\psi_3\varphi_3=0,& cC=0, \\
\psi_{n-2}\varphi_{n-2}+Cc+Dd=0,& dD=0, \\
\varphi_{2l-1}\psi_{2l-1}+\varphi_{2l}\psi_{2l}=0 & (l=2,\cdots,r-1),\\
\psi_{2l}\varphi_{2l}+\psi_{2l+1}\varphi_{2l+1}=0 & (l=2,\cdots,r-2).
\end{cases}
\end{array}} &
 \begin{array}{c}
{\footnotesize 
\xymatrix@C=3pt@R=7pt{
\ar@{.}[dddddddd] \\
\\
\\
\\
\\
\\
\\
\\
\\
} }
 \end{array} &
{\footnotesize
 \begin{array}{c}
\begin{cases}
aA=0,& bB=0, \\
Aa+Bb+\psi_3\varphi_3=0,& cC=0, \\
\varphi_{n-2}\psi_{n-2}+Cc+Dd=0,& dD=0, \\
\varphi_{2l-1}\psi_{2l-1}+\varphi_{2l}\psi_{2l}=0 & (l=2,\cdots,r-2),\\
\psi_{2l}\varphi_{2l}+\psi_{2l+1}\varphi_{2l+1}=0 & (l=2,\cdots,r-2).
\end{cases}
 \end{array}} 
\end{array}
\] 

\bigskip

Applying the counting argument, we have the following picture.

{\footnotesize 
\[\xymatrix@C=7pt@R=5pt{
0\ar[rd] \\
&2\ar[rd]\ar[r]&1\ar[r]&2\ar[rd]&&&&&&&&&&&&2\ar[r]\ar[rd]&1\ar[r]&2 \\
&&3\ar[ru]\ar[rd]&&3\ar@{.}[rd]&&&&&&&&&&3\ar[ru]\ar[rd]&&3\ar[ru] \\
&&&4\ar[ru]\ar@{.}[rd]&&m-1\ar[rd]&&&&&&&&m-1\ar@{.}[ru]\ar[rd]&&4\ar[ru] \\
&&&&m\ar[ru]\ar[rd]&&m\ar@{.}[rd]&&&&&&m\ar[ru]\ar[rd]&&m\ar@{.}[ru] \\
&&&&&m+1\ar[ru]\ar@{.}[rd]&&n-4\ar[rd]&&&&n-4\ar@{.}[ru]\ar[rd]&&m+1\ar[ru] \\
&&&&&&n-3\ar[ru]\ar[rd]&&n-3\ar[rd]&&n-3\ar[ru]\ar[rd]&&n-3\ar@{.}[ru] \\
&&&&&&&n-2\ar[ru]\ar[r]\ar[rd]&n-1\ar[r]&n-2^2\ar[ru]\ar[r]\ar[rd]&n-1\ar[r]&n-2\ar[ru] \\
&&&&&&&&n\ar[ru]&&n\ar[ru] \\
} \]}

Since there is no big differences between an even number case and an odd number case, we will explain the former case. 
Thus, in the rest of this subsection, we suppose $n=2r$. 

\bigskip

\begin{itemize}
 \item[] \underline{\textbf{$\cdot$ the case of $M_1$ in $D_n$}\;}

 \medskip
 Since $\rank_RM_1=1$, the multiplicity $d_{t,1}$ of $M_t$ in ${}^eM_1$ is the following. 

{\footnotesize 
 \begin{equation}
 \label{Dn_M1_multi}
 \begin{tabular}{c|ccccccccc} 
   $M_t$&$R$&$M_1$&$M_2$&$\cdots$&$M_m$&$\cdots$&$M_{n-2}$&$M_{n-1}$&$M_n$ \\ \hline
   $d_{t,1}$&1&1&2&$\cdots$&2&$\cdots$&2&1&1 \\ 
 \end{tabular}
 \end{equation}}

 \smallskip

The paths which generate $g_{1,1}$ are only $(R\xrightarrow{aB}M_1)$, $(M_1\xrightarrow{1_{M_1}}M_1)$ and $(M_2\xrightarrow{B}M_1)\times 2$. 
Thus, the dual $F$-signature of $M_1$ can take $s(M_1)\le\frac{1}{4(n-2)}+\frac{1}{4(n-2)}+\frac{2}{4(n-2)}=\frac{4}{4(n-2)}$.
The following table and Lemma\,\ref{key_lemma} shows $s(M_1)=\displaystyle\frac{4}{4(n-2)}$.

 {\footnotesize\[
 \begin{tabular}{c|ccccccccccc|c} 
   $M_t$&$R$&$M_1$&$M_2$&$M_3$&$M_4$&$\cdots$&$M_m$&$\cdots$&$M_{n-2}$&$M_{n-1}$&$M_n$&Total \\ \hline
   Path&$aB$&$1_{M_1}$&$B$&$\varphi_3B$&0&$\cdots$&0&$\cdots$&0&0&0& \\ \hline
   $g_{1,1}$&1&1&2&0&0&$\cdots$&0&$\cdots$&0&0&0&4 \\
   $g_{1,2}$&0&1&2&2&0&$\cdots$&0&$\cdots$&0&0&0&5 \\
 \end{tabular}
 \]}

\bigskip

 \item[] \underline{\textbf{$\cdot$ the case of $M_m\;(\;2\le m\le n/2\;)$ in $D_n$}\;}

 \medskip

 Since $\rank_RM_m=2$, the multiplicity $d_{t,m}$ of $M_t$ in ${}^eM_m$ is the following.

{\footnotesize 
 \begin{equation}
 \label{Dn_Mm_multi}
 \begin{tabular}{c|ccccccccc} 
   $M_t$&$R$&$M_1$&$M_2$&$\cdots$&$M_m$&$\cdots$&$M_{n-2}$&$M_{n-1}$&$M_n$ \\ \hline
   $d_{t,m}$&2&2&4&$\cdots$&4&$\cdots$&4&2&2 \\ 
 \end{tabular}
 \end{equation}}

 \smallskip

In the same way as the previous example, we can see that the MCM $R$-modules $R\times 2, M_2\times 4, \cdots, M_m\times 4$ can generate $g_{m,1}$, 
and we have no other such MCMs. 
Thus, the dual $F$-signature of $M_m$ can take $s(M_m)\le\frac{2}{4(n-2)}+\frac{4(m-1)}{4(n-2)}=\frac{4m-2}{4(n-2)}$. 
By Lemma\,\ref{key_lemma} and the following table, we conclude $s(M_m)=\displaystyle\frac{4m-2}{4(n-2)}$. 

 {\footnotesize\[
 \begin{tabular}{c|ccccccccccc|c} 
   $M_t$&$R$&$M_1$&$M_2$&$\cdots$&$M_{m-1}$&$M_m$&$M_{m+1}$&$\cdots$&$M_{n-2}$&$M_{n-1}$&$M_n$&Total \\ \hline
   Path&$a\Gamma^2_m$&$b\Gamma^2_m$&$\Gamma^2_m$&&$\varphi_m$&$1_{M_m}$&$\varphi_{m+1}$&&$\Lambda^{n-2}_m$&$c\Lambda^{n-2}_m$&0& \\ \hline
   $g_{m,1}$&2&0&4&$\cdots$&4&4&0&$\cdots$&0&0&0&$4m-2$ \\
   $g_{m,2}$&0&2&4&$\cdots$&4&4&0&$\cdots$&0&0&0&$4m-2$ \\
   $g_{m,3}$&0&0&0&$\cdots$&0&4&4&$\cdots$&4&2&0&$4n-4m-2$ \\
   $g_{m,4}$&0&0&0&$\cdots$&0&4&4&$\cdots$&4&2&0&$4n-4m-2$ \\
 \end{tabular}
 \]}

Here, we set $\Gamma^i_m\coloneqq \psi_{i+1}\varphi_{i+2}\cdots\psi_{m-1}\varphi_m,\; \Lambda^i_m\coloneqq \psi_{i}\varphi_{i-1}\cdots\psi_{m+2}\varphi_{m+1}$. 
In this table, we suppose that $m$ is an even number. 
Although the notation is slightly different, we obtain a similar table for an odd number case.  

\bigskip

 \item[] \underline{\textbf{$\cdot$ the case of $M_m\;(\;n/2< m\le n-2\;)$ in $D_n$}\;}

 \medskip

The multiplicity $d_{t,m}$ of $M_t$ in ${}^eM_m$ is the same as the table (\ref{Dn_Mm_multi}). 
In order to obtain the upper bounds of $s(M_m)$, we classify the MCM $R$-modules in ${}^eM_m$ as follows;  
\[
(\rmI)\;\{\;M_m\times 4\;\}\quad(\rmI\rmI)\;\{\;R\times 2, M_2\times 4, \cdots, M_{m-1}\times 4\;\}
\]
\[
(\rmI\rmI\rmI)\;\{\;M_{m+1}\times 4, \cdots, M_{n-2}\times 4, M_{n-1}\times 2, M_n\times 2\;\}
\]
where the class\,$(\rmI)$ $($resp. $(\rmI\rmI),(\rmI\rmI\rmI))$ is the set of MCM $R$-modules which generate $g_{m,1}$ and $g_{m,3}$ at the same time
$($resp. either $g_{m,1}$ or $g_{m,3}$, only $g_{m,3}$ $)$. 
In order to construct enough surjections, we should combine MCM modules in $(\rmI\rmI)$ and $(\rmI\rmI\rmI)$, that is, 
we use $(\rmI\rmI)$'s for $g_{m,1}$ and $(\rmI\rmI\rmI)$'s for $g_{m,3}$.
After making an appropriate pair of them (we obtain $4(n-m-1) pairs$), we have $2(4m-2n-1)$ remaining MCM modules in $(\rmI\rmI)$. 
We can use a half of remainders for $g_{m,1}$ and the others for $g_{m,3}$. 
Thus, we have the upper bounds $s(M_m)\le\frac{4}{4(n-2)}+\frac{4(n-m-1)}{4(n-2)}+\frac{4m-2n-1}{4(n-2)}=\frac{2n-1}{4(n-2)}$, 
and we have the following table. $($In this table, we suppose that $m$ is an even number. 
Although the notation is slightly different, we obtain a similar table for an odd number case.$)$ 

 {\footnotesize\[
 \begin{tabular}{c|ccccccccccccc} 
   $M_t$&$R$&$R$&$M_1$&$M_1$&$M_2$&$\cdots$&$M_{2m-n}$&$M_2$&$\cdots$&$M_{2m-n}$\\ \hline
   Path&$a\Gamma^2_m$&$a\Gamma^2_{n-2}Dd\Lambda^{n-2}_m$&$b\Gamma^2_m$&$b\Gamma^2_{n-2}Dd\Lambda^{n-2}_m$&$\Gamma^2_m$&&$\Gamma^{2m-n}_m$&$\Gamma^2_{n-2}Dd\Lambda^{n-2}_m$&&$\Gamma^{2m-n}_{n-2}Dd\Lambda^{n-2}_m$ \\ \hline
   $g_{m,1}$&1&0&0&0&2&$\cdots$&2&0&$\cdots$&0 \\
   $g_{m,2}$&0&0&1&0&2&$\cdots$&2&0&$\cdots$&0 \\
   $g_{m,3}$&0&1&0&0&0&$\cdots$&0&2&$\cdots$&2 \\
   $g_{m,4}$&0&0&0&1&0&$\cdots$&0&2&$\cdots$&2 \\
 \end{tabular}
\]
\[
 \begin{tabular}{ccccccccc|c} 
   $M_{2m-n+1}$&$\cdots$&$M_{m-1}$&$M_m$&$M_{m+1}$&$\cdots$&$M_{n-2}$&$M_{n-1}$&$M_n$&Total \\ \hline
   $\varphi_{2m-n+2}\Gamma^{2m-n+2}_m$&&$\varphi_m$&$1_{M_m}$&$\varphi_{m+2}\Gamma^{m+2}_{n-2}Dd\Lambda^{n-2}_m$&&$\Lambda^{n-2}_m$&$c\Lambda^{n-2}_m$&$d\Lambda^{n-2}_m$& \\ \hline
   4&$\cdots$&4&4&0&$\cdots$&0&0&0&$2n-1$ \\
   4&$\cdots$&4&4&0&$\cdots$&0&0&0&$2n-1$ \\
   0&$\cdots$&0&4&4&$\cdots$&4&2&2&$2n-1$ \\
   0&$\cdots$&0&4&4&$\cdots$&4&2&2&$2n-1$ \\
 \end{tabular}
 \]}

Thus, we conclude $s(M_m)=\displaystyle\frac{2n-1}{4(n-2)}$ by Lemma\,\ref{key_lemma}.

\bigskip

 \item[] \underline{\textbf{$\cdot$ the case of $M_{n-1}$ in $D_n$}\;}

 \medskip

The multiplicity $d_{t,n-1}$ of $M_t$ in ${}^eM_{n-1}$ is the same as the table (\ref{Dn_M1_multi}). 
Similarly, we have the upper bounds $s(M_{n-1})\le\frac{2(n-2)}{4(n-2)}$ by selecting paths which generate $g_{n-1,1}$, 
and we have the following table. $($In this table, we suppose that $m$ is an even number, the same as the previous case.$)$ 

 {\footnotesize\[
 \begin{tabular}{c|ccccccccc|c} 
   $M_t$&$R$&$M_1$&$M_2$&$\cdots$&$M_m$&$\cdots$&$M_{n-2}$&$M_{n-1}$&$M_n$&Total \\ \hline
   Path&$a\Gamma^2_{n-2}C$&$b\Gamma^2_{n-2}C$&$\Gamma^2_{n-2}C$&&$\Gamma^m_{n-2}C$&&$C$&$1_{M_{n-1}}$&0& \\ \hline
   $g_{n-1,1}$&1&0&2&$\cdots$&2&$\cdots$&2&1&0&2(n-2) \\
   $g_{n-1,2}$&0&1&2&$\cdots$&2&$\cdots$&2&1&0&2(n-2) \\
 \end{tabular}
 \]}

Thus, we conclude $s(M_{n-1})=\displaystyle\frac{2(n-2)}{4(n-2)}$ by Lemma\,\ref{key_lemma}.

\bigskip

 \item[] \underline{\textbf{$\cdot$ the case of $M_n$ in $D_n$}\;}

 \medskip

The AR quiver of $D_n$ is symmetric with respect to $M_{n-1}$ and $M_n$, and $\rank_RM_{n-1}=\rank_RM_n$. 
So we have $s(M_n)=\displaystyle\frac{2(n-2)}{4(n-2)}$ in the same way.

\end{itemize}

%%%E_6%%%
\subsection{Type $E_6$} 
\label{type_E6}

The AR quiver of type $E_6$ $($as the form of $\ZZ E_6)$ is 

{\footnotesize 
\[\xymatrix@C=17pt@R=10pt{
 5\ar[rd]^a&&5\ar[rd]&&5\ar[rd]&&5\ar[rd]&&5\ar[rd]&&5\ar[rd]&&5 \\
 &3\ar[ru]^A\ar[rd]^B&&3\ar[ru]\ar[rd]&&3\ar[ru]\ar[rd]&&3\ar[ru]\ar[rd]&&3\ar[ru]\ar[rd]&&3\ar[ru]\ar[rd]& \\
 2\ar[ru]^b\ar[r]|(0.45){\,c}\ar[rd]|(0.35){d}&1\ar[r]|(0.45){\,C}\ar[rd]|(0.35)E&2\ar[ru]\ar[r]\ar[rd]&1\ar[r]\ar[rd]&2\ar[ru]\ar[r]\ar[rd]&1\ar[r]\ar[rd]&2\ar[ru]\ar[r]\ar[rd]&1\ar[r]\ar[rd]&2\ar[ru]\ar[r]\ar[rd]&1\ar[r]\ar[rd]&2\ar[ru]\ar[r]\ar[rd]&1\ar[r]\ar[rd]&2 \\
 0\ar[ru]|(0.35){\,e\,}&4\ar[ru]|(0.35)D\ar[rd]_F&0\ar[ru]&4\ar[ru]\ar[rd]&0\ar[ru]&4\ar[ru]\ar[rd]&0\ar[ru]&4\ar[ru]\ar[rd]&0\ar[ru]&4\ar[ru]\ar[rd]&0\ar[ru]&4\ar[ru]\ar[rd]&0 \\
 6\ar[ru]_f&&6\ar[ru]&&6\ar[ru]&&6\ar[ru]&&6\ar[ru]&&6\ar[ru]&&6 
}\] }

with relations 
\[
\begin{cases}
aA=0,& bB+cC+dD=0, \\
eE=0,& fF=0, \\
Aa+Bb=0,& Cc+Ee=0, \\
Dd+Ff=0.
\end{cases}
\]

After applying the counting argument, we have the following quiver. 

{\footnotesize
\[\xymatrix@C=14pt@R=10pt{
 &&&&5\ar[rd]&&&&5\ar[rd]&&& \\
 &&&3\ar[ru]\ar[rd]&&3\ar[rd]&&3\ar[ru]\ar[rd]&&3\ar[rd]&& \\
 &1\ar[r]&2\ar[ru]\ar[rd]&&2\ar[ru]\ar[r]\ar[rd]&1\ar[r]&2^2\ar[ru]\ar[r]\ar[rd]&1\ar[r]&2\ar[ru]\ar[rd]&&2\ar[r]&1 \\
 0\ar[ru]&&&4\ar[ru]\ar[rd]&&4\ar[ru]&&4\ar[ru]\ar[rd]&&4\ar[ru]&& \\
 &&&&6\ar[ru]&&&&6\ar[ru]&&& 
}\] }

\begin{itemize}
 \item[] \underline{\textbf{$\cdot$ the case of $M_1$ in $E_6$}\;}

 \medskip
 Since $\rank_RM_1=2$, the multiplicity $d_{t,1}$ of $M_t$ in ${}^eM_1$ is the following.
 
{\footnotesize
 \begin{equation}
 \label{E6_M1_multi}
 \begin{tabular}{c|ccccccc} 
   $M_t$&$R$&$M_1$&$M_2$&$M_3$&$M_4$&$M_5$&$M_6$ \\ \hline
   $d_{t,1}$&2&4&6&4&4&2&2 \\ 
 \end{tabular}
 \end{equation}}

 \smallskip

Since the paths which generate $g_{1,1}$ are only $(R\xrightarrow{e}M_1)\times 2$ and $(M_1\xrightarrow{1_{M_1}}M_1)\times 4$, 
the dual $F$-signature of $M_1$ can take $s(M_1)\le\frac{6}{24}$. 
The following table and Lemma\,\ref{key_lemma} assert $s(M_1)=\displaystyle\frac{6}{24}$.
 {\footnotesize\[
 \begin{tabular}{c|ccccccc|c} 
   $M_t$&$R$&$M_1$&$M_2$&$M_3$&$M_4$&$M_5$&$M_6$&Total \\ \hline
   Path&$e$&$1_{M_1}$&c&0&0&0&0& \\ \hline
   $g_{1,1}$&2&4&0&0&0&0&0&6 \\
   $g_{1,2}$&0&4&6&0&0&0&0&10 \\
   $g_{1,3}$&0&4&6&0&0&0&0&10 \\
   $g_{1,4}$&0&4&6&0&0&0&0&10 
 \end{tabular}
 \]}

\bigskip

 \item[] \underline{\textbf{$\cdot$ the case of $M_2$ in $E_6$}\;}

 \medskip

 Since $\rank_RM_2=3$, the multiplicity $d_{t,2}$ of $M_t$ in ${}^eM_2$ is the following.

{\footnotesize
 \begin{equation}
 \begin{tabular}{c|ccccccc} 
   $M_t$&$R$&$M_1$&$M_2$&$M_3$&$M_4$&$M_5$&$M_6$ \\ \hline
   $d_{t,2}$&3&6&9&6&6&3&3 \\ 
 \end{tabular}
 \end{equation}}

Similarly, we have the upper bounds $s(M_2)\le\frac{18}{24}$ by selecting paths which generate $g_{2,1}$. 
Suppose that $g_{2,3}$ $($resp. $g_{2,4})$ is generated through a path which factor through $1\xrightarrow{C}2^2$ 
$($resp. $4\xrightarrow{D}2^2)$. So we can use paths which factor through $3\xrightarrow{B}2^2$ for either $g_{2,3}$ or $g_{2,4}$ 
$($see the arguments in the case of $M_3$ in $D_5)$. 
Then we have the following table.

 {\footnotesize\[
 \begin{tabular}{c|ccccccc|c} 
   $M_t$&$R$&$M_1$&$M_2$&$M_3$&$M_4$&$M_5$&$M_6$&Total \\ \hline
   Path&$eC$&$C$&$1_{M_2}$&$B$&$D$&0&$fD$& \\ \hline
   $g_{2,1}$&3&6&9&0&0&0&0&18 \\
   $g_{2,2}$&0&0&9&6&6&0&0&21 \\
   $g_{2,3}$&0&6&9&6&0&0&0&21 \\
   $g_{2,4}$&0&0&9&0&6&0&3&18 \\ 
   $g_{2,5}$&0&6&9&6&6&0&0&27 \\ 
   $g_{2,6}$&0&0&9&6&6&0&3&24  
 \end{tabular}
 \]}

Thus, we conclude $s(M_2)=\displaystyle\frac{18}{24}$ by Lemma\,\ref{key_lemma}.

\bigskip

 \item[] \underline{\textbf{$\cdot$ the case of $M_3$ in $E_6$}\;}

 \medskip

Since $\rank_RM_3=2$, the multiplicity $d_{t,3}$ of $M_t$ in ${}^eM_3$ is the same as the table (\ref{E6_M1_multi}) 
and we have the upper bounds $s(M_3)\le\frac{16}{24}$ by selecting paths which generate $g_{3,1}$.
The equality follows from the following table and Lemma\,\ref{key_lemma}.

 {\footnotesize\[
 \begin{tabular}{c|ccccccc|c} 
   $M_t$&$R$&$M_1$&$M_2$&$M_3$&$M_4$&$M_5$&$M_6$&Total \\ \hline
   Path&$eCb$&$Cb$&$b$&$1_{M_3}$&$Db$&$a$&$fDb$& \\ \hline
   $g_{3,1}$&2&4&6&4&0&0&0&16 \\
   $g_{3,2}$&0&0&6&4&4&2&0&16 \\
   $g_{3,3}$&0&4&6&4&4&0&2&20 \\
   $g_{3,4}$&0&4&6&4&4&2&0&20   
 \end{tabular}
 \]}

\bigskip

 \item[] \underline{\textbf{$\cdot$ the case of $M_4$ in $E_6$}\;}

 \medskip

 The AR quiver of $E_6$ is symmetric with respect to $M_3$ and $M_4$, and $\rank_RM_3=\rank_RM_4$.
 So we have $s(M_4)=\displaystyle\frac{16}{24}$ in the same way.

\bigskip

 \item[] \underline{\textbf{$\cdot$ the case of $M_5$ in $E_6$}\;}

 \medskip

 Since $\rank_RM_5=1$, the multiplicity $d_{t,5}$ of $M_t$ in ${}^eM_5$ is the following.

{\footnotesize
 \begin{equation}
 \begin{tabular}{c|ccccccc} 
   $M_t$&$R$&$M_1$&$M_2$&$M_3$&$M_4$&$M_5$&$M_6$ \\ \hline
   $d_{t,5}$&1&2&3&2&2&1&1 \\ 
 \end{tabular}
 \end{equation}}

 \smallskip

We have the upper bounds $s(M_5)\le\frac{9}{24}$ by selecting paths which generate $g_{5,1}$, 
and the following table and Lemma\,\ref{key_lemma} assert the equality.

 {\footnotesize\[
 \begin{tabular}{c|ccccccc|c} 
   $M_t$&$R$&$M_1$&$M_2$&$M_3$&$M_4$&$M_5$&$M_6$&Total \\ \hline
   Path&$eCbA$&$CbA$&$bA$&$A$&$DbA$&$1_{M_5}$&0& \\ \hline
   $g_{5,1}$&1&2&3&2&0&1&0&9 \\
   $g_{5,2}$&0&2&3&2&2&1&0&10 \\  
 \end{tabular}
 \]}

\bigskip

 \item[] \underline{\textbf{$\cdot$ the case of $M_6$ in $E_6$}\;}

 \medskip

 The AR quiver of $E_6$ is symmetric with respect to $M_5$ and $M_6$, and $\rank_RM_5=\rank_RM_6$. 
 So we have $s(M_6)=\displaystyle\frac{9}{24}$ in the same way.

\bigskip

\end{itemize}

%%%E_7%%%
\subsection{Type $E_7$} 
\label{type_E7}

The AR quiver of type $E_7$ $($as the form of $\ZZ E_7)$ is 

{\footnotesize 
\[\xymatrix@C=15pt@R=10pt{
  0\ar[rd]^a&&0\ar[rd]&&0\ar[rd]&&0\ar[rd]&&0\ar[rd]&&0\ar[rd]&&0 \\
  &1\ar[ru]^A\ar[rd]^B&&1\ar[ru]\ar[rd]&&1\ar[ru]\ar[rd]&&1\ar[ru]\ar[rd]&&1\ar[ru]\ar[rd]&&1\ar[ru]\ar[rd]&\\
  2\ar[ru]^b\ar[rd]|{\,c\,}&&2\ar[ru]\ar[rd]&&2\ar[ru]\ar[rd]&&2\ar[ru]\ar[rd]&&2\ar[ru]\ar[rd]&&2\ar[ru]\ar[rd]&&2 \\
  7\ar[r]|(0.38){\,d\,}&3\ar[ru]|{C\,}\ar[r]|(0.38){D\,}\ar[rd]|E&7\ar[r]&3\ar[ru]\ar[r]\ar[rd]&7\ar[r]&3\ar[ru]\ar[r]\ar[rd]&7\ar[r]&3\ar[ru]\ar[r]\ar[rd]&7\ar[r]&3\ar[ru]\ar[r]\ar[rd]&
7\ar[r]&3\ar[ru]\ar[r]\ar[rd]&7 \\
  4\ar[ru]|{\,e\,}\ar[rd]_f&&4\ar[ru]\ar[rd]&&4\ar[ru]\ar[rd]&&4\ar[ru]\ar[rd]&&4\ar[ru]\ar[rd]&&4\ar[ru]\ar[rd]&&4 \\
  &5\ar[ru]_F\ar[rd]_G&&5\ar[ru]\ar[rd]&&5\ar[ru]\ar[rd]&&5\ar[ru]\ar[rd]&&5\ar[ru]\ar[rd]&&5\ar[ru]\ar[rd]& \\
  6\ar[ru]_g&&6\ar[ru]&&6\ar[ru]&&6\ar[ru]&&6\ar[ru]&&6\ar[ru]&&6 
}\] }

with relations 
\[
\begin{cases}
aA=0,& bB+cC=0, \\
dD=0,& eE+fF=0, \\
gG=0,& Aa+Bb=0, \\
Cc+Dd+Ee=0,& Ff+Gg=0.
\end{cases}
\]

After applying the counting argument, we have the following quiver. 

{\footnotesize
\[\xymatrix@C=10pt@R=10pt{
  0\ar[rd]&&&&&&&&&&&&&&&&&&&& \\
  &1\ar[rd]&&&&&&1\ar[rd]&&&&1\ar[rd]&&&&&&1&&&\\
  &&2\ar[rd]&&&&2\ar[ru]\ar[rd]&&2\ar[rd]&&2\ar[ru]\ar[rd]&&2\ar[rd]&&&&2\ar[ru]&&&& \\
  &&&3\ar[r]\ar[rd]&7\ar[r]&3\ar[ru]\ar[rd]&&3\ar[ru]\ar[r]\ar[rd]&7\ar[r]&3^2\ar[ru]\ar[r]\ar[rd]&
7\ar[r]&3\ar[ru]\ar[rd]&&3\ar[r]\ar[rd]&7\ar[r]&3\ar[ru]&&&&& \\
  &&&&4\ar[ru]\ar[rd]&&4\ar[ru]\ar[rd]&&4\ar[ru]&&4\ar[ru]\ar[rd]&&4\ar[ru]\ar[rd]&&4\ar[ru]&&&&&& \\
  &&&&&5\ar[ru]\ar[rd]&&5\ar[ru]&&&&5\ar[ru]\ar[rd]&&5\ar[ru]&&&&&&& \\
  &&&&&&6\ar[ru]&&&&&&6\ar[ru]&&&&&&&& 
}\] }

\begin{itemize}
 \item[] \underline{\textbf{$\cdot$ the case of $M_1$ in $E_7$}\;}

 \medskip

 Since $\rank_RM_1=2$, the multiplicity $d_{t,1}$ of $M_t$ in ${}^eM_1$ is the following.

{\footnotesize
 \begin{equation}
 \label{E7_M1_multi}
 \begin{tabular}{c|cccccccc} 
   $M_t$&$R$&$M_1$&$M_2$&$M_3$&$M_4$&$M_5$&$M_6$&$M_7$ \\ \hline
   $d_{t,1}$&2&4&6&8&6&4&2&4 \\ 
 \end{tabular}
 \end{equation}}

 \smallskip

Since the paths which generate $g_{1,1}$ are only $(R\xrightarrow{a}M_1)\times 2, (M_1\xrightarrow{1_{M_1}}M_1)\times 4$, 
we have the upper bounds $s(M_1)\le\frac{6}{48}$.

 {\footnotesize\[
 \begin{tabular}{c|cccccccc|c} 
   $M_t$&$R$&$M_1$&$M_2$&$M_3$&$M_4$&$M_5$&$M_6$&$M_7$&Total \\ \hline
   Path&$a$&$1_{M_1}$&$b$&0&0&0&0&0& \\ \hline
   $g_{1,1}$&2&4&0&0&0&0&0&0&6 \\
   $g_{1,2}$&0&4&6&0&0&0&0&0&10 \\  
   $g_{1,3}$&0&4&6&0&0&0&0&0&10 \\ 
   $g_{1,4}$&0&4&6&0&0&0&0&0&10 \\ 
 \end{tabular}
 \]}

We conclude $s(M_1)=\displaystyle\frac{6}{48}$ by Lemma\,\ref{key_lemma} and the above table.

\bigskip

 \item[] \underline{\textbf{$\cdot$ the case of $M_2$ in $E_7$}\;}

 \medskip

 Since $\rank_RM_2=3$, the multiplicity $d_{t,2}$ of $M_t$ in ${}^eM_2$ is the following.

{\footnotesize
 \begin{equation}
 \label{E7_M2_multi}
 \begin{tabular}{c|cccccccc} 
   $M_t$&$R$&$M_1$&$M_2$&$M_3$&$M_4$&$M_5$&$M_6$&$M_7$ \\ \hline
   $d_{t,2}$&3&6&9&12&9&6&3&6 \\ 
 \end{tabular}
 \end{equation}}

 \smallskip

Similarly, we have the upper bounds $s(M_2)\le\frac{18}{48}$ by selecting paths which generate $g_{2,1}$. 
The following table and Lemma\,\ref{key_lemma} assert the equality. 

 {\footnotesize\[
 \begin{tabular}{c|cccccccc|c} 
   $M_t$&$R$&$M_1$&$M_2$&$M_3$&$M_4$&$M_5$&$M_6$&$M_7$&Total \\ \hline
   Path&$aB$&$B$&$1_{M_2}$&0&$eC$&0&0&0& \\ \hline
   $g_{2,1}$&3&6&9&0&0&0&0&0&18 \\
   $g_{2,2}$&0&0&9&0&9&0&0&0&18 \\  
   $g_{2,3}$&0&6&9&0&9&0&0&0&24 \\ 
   $g_{2,4}$&0&0&9&0&9&0&0&0&18 \\ 
   $g_{2,5}$&0&6&9&0&9&0&0&0&24 \\ 
   $g_{2,6}$&0&0&9&0&9&0&0&0&18  
 \end{tabular}
 \]}

\bigskip

 \item[] \underline{\textbf{$\cdot$ the case of $M_3$ in $E_7$}\;}

 \medskip

 Since $\rank_RM_3=4$, the multiplicity $d_{t,3}$ of $M_t$ in ${}^eM_3$ is the following.

{\footnotesize
 \begin{equation}
 \begin{tabular}{c|cccccccc} 
   $M_t$&$R$&$M_1$&$M_2$&$M_3$&$M_4$&$M_5$&$M_6$&$M_7$ \\ \hline
   $d_{t,3}$&4&8&12&16&12&8&4&8 \\ 
 \end{tabular}
 \end{equation}}

In this case, we need to pay attention for determining the upper bounds. 
The MCM $R$-module $M_3$ can generate both $g_{3,1}$ and $g_{3,2}$ through the path $(M_3\xrightarrow{1_{M_3}}M_3)$. 
Similarly, we can read off that $R, M_1, M_2$ generate either $g_{3,1}$ or $g_{3,2}$ and $M_4, M_7$ generate $g_{3,2}$ 
but don't generate $g_{3,1}$. 
Collectively, we classify the MCM $R$-modules in ${}^eM_3$ as follows;  
\[
(\rmI)\;\{\;M_3\times 16\;\}\quad(\rmI\rmI)\;\{\;R\times 4, M_1\times 8, M_2\times 12\;\}\quad(\rmI\rmI\rmI)\;\{\;M_4\times 12,M_7\times 8\;\}
\]
where the class$(\rmI)$ $($resp. $(\rmI\rmI),(\rmI\rmI\rmI))$ is the set of MCM $R$-modules which generate $g_{3,1}$ and $g_{3,2}$ at the same time 
$($resp. either $g_{3,1}$ or $g_{3,2}$, only $g_{3,2}$ $)$. 
In order to construct enough surjections, we should combine MCM modules in $(\rmI\rmI)$ and $(\rmI\rmI\rmI)$, that is, 
we use $(\rmI\rmI)$'s for $g_{3,1}$ and $(\rmI\rmI\rmI)$'s for $g_{3,2}$.
After making an appropriate pair of them, we have four remaining MCM modules in $(\rmI\rmI)$. 
We can use half of remainders for $g_{3,1}$ and others for $g_{3,2}$. 
Thus, we have the upper bounds $s(M_3)\le\frac{16}{48}+\frac{20}{48}+\frac{2}{48}=\frac{38}{48}$. 

 {\footnotesize\[
 \begin{tabular}{c|ccccccccc|c} 
   $M_t$&$R$&$M_1$&$M_2$&$M_2$&$M_3$&$M_4$&$M_5$&$M_6$&$M_7$&Total \\ \hline
   Path&$aBc$&$Bc$&$c$&$cDd$&$1_{M_3}$&$e$&$Fe$&$gFe$&$d$& \\ \hline
   $g_{3,1}$&4&8&10&0&16&0&0&0&0&38 \\
   $g_{3,2}$&0&0&0&2&16&12&0&0&8&38 \\  
   $g_{3,3}$&0&0&10&0&16&12&8&0&0&46 \\ 
   $g_{3,4}$&0&8&10&0&16&0&8&0&0&42 \\ 
   $g_{3,5}$&0&0&0&2&16&12&0&4&8&42 \\ 
   $g_{3,6}$&0&0&10&2&16&12&0&0&8&48 \\  
   $g_{3,7}$&0&8&10&0&16&12&8&0&0&54 \\
   $g_{3,8}$&0&0&0&2&16&12&8&4&8&50 
 \end{tabular}
 \]}

By the above table and Lemma\,\ref{key_lemma}, we conclude $s(M_3)=\displaystyle\frac{38}{48}$. 
In this table, we fix that $g_{3,4}\,($resp. $g_{3,5})$ is a minimal generator identified with a path 
which factor through $2\xrightarrow{c}3^2\,($resp. $7\xrightarrow{d}3^2)$, 
and we can use paths which factor through $4\xrightarrow{e}3^2$ for generating either $g_{3,4}$ or $g_{3,5}$.

\bigskip

 \item[] \underline{\textbf{$\cdot$ the case of $M_4$ in $E_7$}\;}

 \medskip

 Since $\rank_RM_4=3$, the multiplicity $d_{t,4}$ of $M_t$ in ${}^eM_4$ is the same as the table (\ref{E7_M2_multi}).
In the same way as $M_3$, we classify the MCM $R$-modules in ${}^eM_4$ as 
\[
(\rmI)\;\{\;M_3\times 12, M_4\times 9\;\}\quad(\rmI\rmI)\;\{\;R\times 3, M_1\times 6, M_2\times 9\;\}\quad(\rmI\rmI\rmI)\;\{\;M_5\times 6,M_7\times 6\;\}
\]
where the class$(\rmI)$ $($resp. $(\rmI\rmI),(\rmI\rmI\rmI))$ is the set of MCM $R$-modules which generate $g_{4,1}$ and $g_{4,2}$ at the same time 
$($resp. either $g_{4,1}$ or $g_{4,2}$, only $g_{4,2}$ $)$ and obtain the upper bound $s(M_4)\le\frac{21}{48}+\frac{12}{48}+\frac{3}{48}=\frac{36}{48}$ 
in the same way as the case of $M_3$. 

 {\footnotesize\[
 \begin{tabular}{c|cccccccc|c} 
   $M_t$&$R$&$M_1$&$M_2$&$M_3$&$M_4$&$M_5$&$M_6$&$M_7$&Total \\ \hline
   Path&$aBcDdE$&$BcE$&$cE$&$E$&$1_{M_4}$&$F$&0&$dE$& \\ \hline
   $g_{4,1}$&0&6&9&12&9&0&0&0&36 \\
   $g_{4,2}$&3&0&0&12&9&6&0&6&36 \\  
   $g_{4,3}$&0&0&9&12&9&6&0&0&36 \\ 
   $g_{4,4}$&0&6&9&12&9&0&0&6&42 \\ 
   $g_{4,5}$&0&0&9&12&9&6&0&6&42 \\ 
   $g_{4,6}$&0&6&9&12&9&6&0&0&42  
 \end{tabular}
 \]}

Thus, we conclude $s(M_4)=\displaystyle\frac{36}{48}$ by Lemma\,\ref{key_lemma} and the above table. 

\bigskip

 \item[] \underline{\textbf{$\cdot$ the case of $M_5$ in $E_7$}\;}

 \medskip

 Since $\rank_RM_5=2$, the multiplicity $d_{t,5}$ of $M_t$ in ${}^eM_5$ is the same as the table (\ref{E7_M1_multi}).
In the same as before, we can classify the MCM $R$-modules in ${}^eM_5$ as 
\[
(\rmI)\;\{\;M_3\times 8, M_4\times 6, M_5\times 4\;\}\quad(\rmI\rmI)\;\{\;R\times 2, M_1\times 4, M_2\times 6\;\}\quad(\rmI\rmI\rmI)\;\{\;M_6\times 2,M_7\times 4\;\}
\]
where the class$(\rmI)$ $($resp. $(\rmI\rmI),(\rmI\rmI\rmI))$ is the set of MCM $R$-modules which generate $g_{5,1}$ and $g_{5,2}$ at the same time 
$($resp. either $g_{5,1}$ or $g_{5,2}$, only $g_{5,2}$ $)$ and obtain the upper bound $s(M_5)\le\frac{18}{48}+\frac{6}{48}+\frac{3}{48}=\frac{27}{48}$. 
By Lemma\,\ref{key_lemma} and the following table, we have $s(M_5)=\displaystyle\frac{27}{48}$. 

 {\footnotesize\[
 \begin{tabular}{c|ccccccccc|c} 
   $M_t$&$R$&$M_1$&$M_2$&$M_2$&$M_3$&$M_4$&$M_5$&$M_6$&$M_7$&Total \\ \hline
   Path&$aBcEf$&$BcEf$&$cEf$&$cDdEf$&$Ef$&$f$&$1_{M_5}$&$g$&$dEf$& \\ \hline
   $g_{5,1}$&2&4&3&0&8&6&4&0&0&27 \\
   $g_{5,2}$&0&0&0&3&8&6&4&2&4&27 \\  
   $g_{5,3}$&0&4&3&3&8&6&4&0&4&32 \\ 
   $g_{5,4}$&0&0&3&3&8&6&4&2&4&30 \\ 
 \end{tabular}
 \]}

\bigskip

 \item[] \underline{\textbf{$\cdot$ the case of $M_6$ in $E_7$}\;}

 \medskip

 Since $\rank_RM_6=1$, the multiplicity $d_{t,6}$ of $M_t$ in ${}^eM_6$ is the following.

{\footnotesize
 \begin{equation}
 \begin{tabular}{c|cccccccc} 
   $M_t$&$R$&$M_1$&$M_2$&$M_3$&$M_4$&$M_5$&$M_6$&$M_7$ \\ \hline
   $d_{t,6}$&1&2&3&4&3&2&1&2 \\ 
 \end{tabular}
 \end{equation}}

 \smallskip

We have $s(M_6)\le\frac{16}{48}$ by selecting paths which generate $g_{6,1}$, 
and we conclude $s(M_6)=\displaystyle\frac{16}{48}$ by Lemma\,\ref{key_lemma} and the following table. 

 {\footnotesize\[
 \begin{tabular}{c|cccccccc|c} 
   $M_t$&$R$&$M_1$&$M_2$&$M_3$&$M_4$&$M_5$&$M_6$&$M_7$&Total \\ \hline
   Path&$aBcEfG$&$BcEfG$&$cEfG$&$EfG$&$fG$&$G$&$1_{M_6}$&$dEfG$& \\ \hline
   $g_{6,1}$&1&2&3&4&3&2&1&0&16 \\
   $g_{6,2}$&0&2&3&4&3&2&1&2&17    
 \end{tabular}
 \]}

\bigskip

 \item[] \underline{\textbf{$\cdot$ the case of $M_7$ in $E_7$}\;}

 \medskip

Since $\rank_RM_7=2$, the multiplicity $d_{t,7}$ of $M_t$ in ${}^eM_7$ is the same as the table (\ref{E7_M1_multi}).
By selecting paths which generate $g_{7,1}$, we have $s(M_7)\le\frac{24}{48}$. 

 {\footnotesize\[
 \begin{tabular}{c|cccccccc|c} 
   $M_t$&$R$&$M_1$&$M_2$&$M_3$&$M_4$&$M_5$&$M_6$&$M_7$&Total \\ \hline
   Path&$aBcD$&$BcD$&$cD$&$D$&$eD$&$FeD$&0&$1_{M_7}$& \\ \hline
   $g_{7,1}$&2&4&6&8&0&0&0&4&24 \\
   $g_{7,2}$&0&0&6&8&6&4&0&4&28 \\
   $g_{7,3}$&0&4&6&8&6&4&0&4&32 \\  
   $g_{7,4}$&0&4&6&8&6&4&0&4&32   
 \end{tabular}
 \]}

Similarly, we conclude $s(M_7)=\displaystyle\frac{24}{48}$.

\end{itemize}

%%%E_8%%%
\subsection{Type $E_8$} 
\label{type_E8}

The AR quiver of type $E_8$ $($as the form of $\ZZ E_8)$ is 

{\footnotesize 
\[\xymatrix@C=15pt@R=10pt{
0\ar[rd]^a&&0\ar[rd]&&0\ar[rd]&&0\ar[rd]&&0\ar[rd]&&0\ar[rd]&&0 \\
&1\ar[ru]^A\ar[rd]^B&&1\ar[ru]\ar[rd]&&1\ar[ru]\ar[rd]&&1\ar[ru]\ar[rd]&&1\ar[ru]\ar[rd]&&1\ar[ru]\ar[rd]& \\
2\ar[ru]^b\ar[rd]^c&&2\ar[ru]\ar[rd]&&2\ar[ru]\ar[rd]&&2\ar[ru]\ar[rd]&&2\ar[ru]\ar[rd]&&2\ar[ru]\ar[rd]&&2 \\
&3\ar[ru]^C\ar[rd]^D&&3\ar[ru]\ar[rd]&&3\ar[ru]\ar[rd]&&3\ar[ru]\ar[rd]&&3\ar[ru]\ar[rd]&&3\ar[ru]\ar[rd]& \\
4\ar[ru]^d\ar[rd]|(0.4)e&&4\ar[ru]\ar[rd]&&4\ar[ru]\ar[rd]&&4\ar[ru]\ar[rd]&&4\ar[ru]\ar[rd]&&4\ar[ru]\ar[rd]&&4 \\
8\ar[r]|(0.4)f&5\ar[ru]|(0.4){\,E\,}\ar[r]|(0.4)F\ar[rd]|(0.4)G&8\ar[r]&5\ar[ru]\ar[r]\ar[rd]&8\ar[r]&5\ar[ru]\ar[r]\ar[rd]&8\ar[r]&5\ar[ru]\ar[r]\ar[rd]&8\ar[r]&
5\ar[ru]\ar[r]\ar[rd]&8\ar[r]&5\ar[ru]\ar[r]\ar[rd]&8 \\
6\ar[ru]|(0.4){\,g\,}\ar[rd]_h&&6\ar[ru]\ar[rd]&&6\ar[ru]\ar[rd]&&6\ar[ru]\ar[rd]&&6\ar[ru]\ar[rd]&&6\ar[ru]\ar[rd]&&6 \\
&7\ar[ru]_H&&7\ar[ru]&&7\ar[ru]&&7\ar[ru]&&7\ar[ru]&&7\ar[ru]& 
}\] }

with relations 
\[
\begin{cases}
aA=0,& bB+cC=0, \\
dD+eE=0,& fF=0, \\
gG+hH=0,& Aa+Bb=0, \\
Cc+Dd=0,& Ee+Ff+Gg=0, \\
Hh=0.
\end{cases}
\]
After applying the counting argument, we have the following quiver 

{\scriptsize 
\[\xymatrix@C=8pt@R=8pt{
0\ar[rd]&&&&&&&&&&&&&&&& \\
&1\ar[rd]&&&&&&&&&&1\ar[rd]&&&&& \\
&&2\ar[rd]&&&&&&&&2\ar[ru]\ar[rd]&&2\ar[rd]&&&& \\
&&&3\ar[rd]&&&&&&3\ar[ru]\ar[rd]&&3\ar[ru]\ar[rd]&&3\ar[rd]&&& \\
&&&&4\ar[rd]&&&&4\ar[ru]\ar[rd]&&4\ar[ru]\ar[rd]&&4\ar[ru]\ar[rd]&&4\ar[rd]&& \\
&&&&&5\ar[r]\ar[rd]&8\ar[r]&5\ar[ru]\ar[rd]&&5\ar[ru]\ar[r]&8\ar[r]&5\ar[ru]\ar[rd]&&5\ar[ru]\ar[r]\ar[rd]&8\ar[r]&5^2& \\
&&&&&&6\ar[ru]\ar[rd]&&6\ar[ru]&&&&6\ar[ru]\ar[rd]&&6\ar[ru]&& \\
&&&&&&&7\ar[ru]&&&&&&7\ar[ru]&&& \\
&&&&&&&&&&1\ar[rd]&&&&&&&&&&1 \\
&&&&&&&&&2\ar[ru]\ar[rd]&&2\ar[rd]&&&&&&&&2\ar[ru]&& \\
&&&&&&&&3\ar[ru]\ar[rd]&&3\ar[ru]\ar[rd]&&3\ar[rd]&&&&&&3\ar[ru]&&& \\
&&&&&&&4\ar[ru]\ar[rd]&&4\ar[ru]\ar[rd]&&4\ar[ru]\ar[rd]&&4\ar[rd]&&&&4\ar[ru]&&&& \\
&&&&&&5^2\ar[ru]\ar[r]\ar[rd]&8\ar[r]&5\ar[ru]\ar[rd]&&5\ar[ru]\ar[r]&8\ar[r]&5\ar[ru]\ar[rd]&&5\ar[r]\ar[rd]&8\ar[r]&5\ar[ru]&&&&& \\
&&&&&&&6\ar[ru]\ar[rd]&&6\ar[ru]&&&&6\ar[ru]\ar[rd]&&6\ar[ru]&&&&&& \\
&&&&&&&&7\ar[ru]&&&&&&7\ar[ru]&&&&&&& 
}\] }

where the right side of upper part and the left side of lower part are identified. 

\begin{itemize}
 \item[] \underline{\textbf{$\cdot$ the case of $M_1$ in $E_8$}\;}

 \medskip

 Since $\rank_RM_1=2$, the multiplicity $d_{t,1}$ of $M_t$ in ${}^eM_1$ is the following.

{\footnotesize
 \begin{equation}
 \label{E8_M1_multi}
 \begin{tabular}{c|ccccccccc} 
   $M_t$&$R$&$M_1$&$M_2$&$M_3$&$M_4$&$M_5$&$M_6$&$M_7$&$M_8$ \\ \hline
   $d_{t,1}$&2&4&6&8&10&12&8&4&6 \\ 
 \end{tabular}
 \end{equation}}

 \smallskip

In a similar way to the other cases, we have the upper bounds $s(M_1)\le\frac{6}{120}$ by selecting paths which generate $g_{1,1}$, 
and the following table and Lemma\,\ref{key_lemma} assert the equality. 

 {\footnotesize\[
 \begin{tabular}{c|ccccccccc|c} 
   $M_t$&$R$&$M_1$&$M_2$&$M_3$&$M_4$&$M_5$&$M_6$&$M_7$&$M_8$&Total \\ \hline
   Path&$a$&$1_{M_1}$&$b$&0&0&0&0&0&0& \\ \hline
   $g_{1,1}$&2&4&0&0&0&0&0&0&0&6 \\
   $g_{1,2}$&0&4&6&0&0&0&0&0&0&10 \\  
   $g_{1,3}$&0&4&6&0&0&0&0&0&0&10 \\ 
   $g_{1,4}$&0&4&6&0&0&0&0&0&0&10 \\ 
 \end{tabular}
 \]}

\bigskip

 \item[] \underline{\textbf{$\cdot$ the case of $M_2$ in $E_8$}\;}

 \medskip

 Since $\rank_RM_2=3$, the multiplicity $d_{t,2}$ of $M_t$ in ${}^eM_2$ is the following.

{\footnotesize
 \begin{equation}
 \label{E8_M2_multi}
 \begin{tabular}{c|ccccccccc} 
   $M_t$&$R$&$M_1$&$M_2$&$M_3$&$M_4$&$M_5$&$M_6$&$M_7$&$M_8$ \\ \hline
   $d_{t,2}$&3&6&9&12&15&18&12&6&9 \\ 
 \end{tabular}
 \end{equation}}

 \smallskip

We have the upper bounds $s(M_2)\le\frac{18}{120}$ by selecting paths which generate $g_{2,1}$. 
Thus, we conclude $s(M_2)=\displaystyle\frac{18}{120}$ by the following table and Lemma\,\ref{key_lemma}. 

 {\footnotesize\[
 \begin{tabular}{c|ccccccccc|c} 
   $M_t$&$R$&$M_1$&$M_2$&$M_3$&$M_4$&$M_5$&$M_6$&$M_7$&$M_8$&Total \\ \hline
   Path&$aB$&$B$&$1_{M_2}$&$C$&0&0&0&0&0& \\ \hline
   $g_{2,1}$&3&6&9&0&0&0&0&0&0&18 \\
   $g_{2,2}$&0&0&9&12&0&0&0&0&0&21 \\  
   $g_{2,3}$&0&6&9&12&0&0&0&0&0&27 \\ 
   $g_{2,4}$&0&0&9&12&0&0&0&0&0&21 \\ 
   $g_{2,5}$&0&6&9&12&0&0&0&0&0&27 \\ 
   $g_{2,6}$&0&0&9&12&0&0&0&0&0&21 
 \end{tabular}
 \]}

\bigskip

 \item[] \underline{\textbf{$\cdot$ the case of $M_3$ in $E_8$}\;}

 \medskip

 Since $\rank_RM_3=4$, the multiplicity $d_{t,3}$ of $M_t$ in ${}^eM_3$ is the following.

{\footnotesize
 \begin{equation}
 \label{E8_M3_multi}
 \begin{tabular}{c|ccccccccc} 
   $M_t$&$R$&$M_1$&$M_2$&$M_3$&$M_4$&$M_5$&$M_6$&$M_7$&$M_8$ \\ \hline
   $d_{t,3}$&4&8&12&16&20&24&16&8&12 \\ 
 \end{tabular}
 \end{equation}}

 \smallskip

We have the upper bounds $s(M_3)\le\frac{40}{120}$ by selecting paths which generate $g_{3,1}$. 
Thus, we conclude $s(M_3)=\displaystyle\frac{40}{120}$ by the following table and Lemma\,\ref{key_lemma}. 

 {\footnotesize\[
 \begin{tabular}{c|ccccccccc|c} 
   $M_t$&$R$&$M_1$&$M_2$&$M_3$&$M_4$&$M_5$&$M_6$&$M_7$&$M_8$&Total \\ \hline
   Path&$aBc$&$Bc$&$c$&$1_{M_3}$&0&$Ed$&0&0&0& \\ \hline
   $g_{3,1}$&4&8&12&16&0&0&0&0&0&40 \\
   $g_{3,2}$&0&0&0&16&0&24&0&0&0&40 \\  
   $g_{3,3}$&0&0&12&16&0&24&0&0&0&52 \\ 
   $g_{3,4}$&0&8&12&16&0&24&0&0&0&60 \\ 
   $g_{3,5}$&0&0&0&16&0&24&0&0&0&40 \\ 
   $g_{3,6}$&0&0&12&16&0&24&0&0&0&52 \\
   $g_{3,7}$&0&8&12&16&0&24&0&0&0&60 \\ 
   $g_{3,8}$&0&0&0&16&0&24&0&0&0&40 
 \end{tabular}
 \]}

\bigskip

 \item[] \underline{\textbf{$\cdot$ the case of $M_4$ in $E_8$}\;}

 \medskip

 Since $\rank_RM_4=5$, the multiplicity $d_{t,4}$ of $M_t$ in ${}^eM_4$ is the following.

{\footnotesize
 \begin{equation}
 \begin{tabular}{c|ccccccccc} 
   $M_t$&$R$&$M_1$&$M_2$&$M_3$&$M_4$&$M_5$&$M_6$&$M_7$&$M_8$ \\ \hline
   $d_{t,4}$&5&10&15&20&25&30&20&10&15 \\ 
 \end{tabular}
 \end{equation}}

 \smallskip

Similarly, we have $s(M_4)\le\frac{75}{120}$. 
The following table and Lemma\,\ref{key_lemma} assert the equality. 

 {\footnotesize\[
 \begin{tabular}{c|ccccccccc|c} 
   $M_t$&$R$&$M_1$&$M_2$&$M_3$&$M_4$&$M_5$&$M_6$&$M_7$&$M_8$&Total \\ \hline
   Path&$aBcD$&$BcD$&$cD$&$D$&$1_{M_4}$&$E$&$gE$&0&0& \\ \hline
   $g_{4,1}$&5&10&15&20&25&0&0&0&0&75 \\
   $g_{4,2}$&0&0&0&0&25&30&20&0&0&75 \\  
   $g_{4,3}$&0&0&0&20&25&30&20&0&0&95 \\ 
   $g_{4,4}$&0&0&15&20&25&30&0&0&0&90 \\ 
   $g_{4,5}$&0&10&15&20&25&30&20&0&0&120 \\ 
   $g_{4,6}$&0&0&0&0&25&30&20&0&0&75 \\
   $g_{4,7}$&0&0&0&20&25&30&20&0&0&95 \\ 
   $g_{4,8}$&0&0&15&20&25&30&20&0&0&110 \\ 
   $g_{4,9}$&0&10&15&20&25&30&0&0&0&100 \\ 
   $g_{4,10}$&0&0&0&0&25&30&20&0&0&75 
 \end{tabular}
 \]}

\bigskip

 \item[] \underline{\textbf{$\cdot$ the case of $M_5$ in $E_8$}\;}

 Since $\rank_RM_5=6$, the multiplicity $d_{t,5}$ of $M_t$ in ${}^eM_5$ is the following.

{\footnotesize
 \begin{equation}
 \begin{tabular}{c|ccccccccc} 
   $M_t$&$R$&$M_1$&$M_2$&$M_3$&$M_4$&$M_5$&$M_6$&$M_7$&$M_8$ \\ \hline
   $d_{t,5}$&6&12&18&24&30&36&24&12&18 \\ 
 \end{tabular}
 \end{equation}}

 \smallskip

In the same way as before, we can classify the MCM $R$-modules appear in ${}^eM_5$ as 
\[
(\rmI)\;\{\;M_5\times 36\;\}\quad(\rmI\rmI)\;\{\;R\times 6, M_1\times 12, M_2\times 18, M_3\times 24, M_4\times 30\;\}\quad
(\rmI\rmI\rmI)\;\{\;M_6\times 24, M_8\times 18\;\}
\]
where the class$(\rmI)$ $($resp. $(\rmI\rmI),(\rmI\rmI\rmI))$ is the set of MCM $R$-modules which generate $g_{5,1}$ and $g_{5,2}$ at the same time 
$($resp. either $g_{5,1}$ or $g_{5,2}$, only $g_{5,2}$ $)$ and obtain the upper bound $s(M_5)\le\frac{36}{120}+\frac{42}{120}+\frac{24}{120}=\frac{102}{120}$. 
By Lemma\,\ref{key_lemma} and the following table, we have $s(M_5)=\displaystyle\frac{102}{120}$. 
In this table, we fix that $g_{5,6}\,($resp. $g_{5,7})$ is a minimal generator identified with a path 
which factor through $4\xrightarrow{e}5^2\,($resp. $8\xrightarrow{f}5^2)$, 
and we can use paths which factor through $6\xrightarrow{g}5^2$ for generating either $g_{5,6}$ or $g_{5,7}$. 

 {\footnotesize\[
 \begin{tabular}{c|ccccccccc|c} 
   $M_t$&$R$&$M_1$&$M_2$&$M_3$&$M_4$&$M_5$&$M_6$&$M_7$&$M_8$&Total \\ \hline
   Path&$aBcDeFf$&$BcDe$&$cDeFf$&$De$&$e$&$1_{M_5}$&$g$&$Hg$&$f$& \\ \hline
   $g_{5,1}$&0&12&0&24&30&36&0&0&0&102 \\
   $g_{5,2}$&6&0&18&0&0&36&24&0&18&102 \\  
   $g_{5,3}$&0&0&0&0&30&36&24&12&0&102 \\ 
   $g_{5,4}$&0&0&0&24&30&36&0&0&18&108 \\ 
   $g_{5,5}$&0&0&0&24&20&36&24&0&0&114 \\ 
   $g_{5,6}$&0&12&0&24&30&36&0&0&0&102 \\
   $g_{5,7}$&0&0&18&0&0&36&24&12&18&108 \\ 
   $g_{5,8}$&0&0&18&0&30&36&24&0&18&126 \\ 
   $g_{5,9}$&0&0&0&24&30&36&24&12&0&126 \\ 
   $g_{5,10}$&0&0&0&24&30&36&0&0&18&108 \\ 
   $g_{5,11}$&0&12&18&24&30&36&24&0&0&144 \\ 
   $g_{5,12}$&0&0&18&0&0&36&24&12&18&108 
 \end{tabular}
 \]}

\bigskip

 \item[] \underline{\textbf{$\cdot$ the case of $M_6$ in $E_8$}\;}

 \medskip

 Since $\rank_RM_6=4$, the multiplicity $d_{t,6}$ of $M_t$ in ${}^eM_6$ is the same as the table (\ref{E8_M3_multi}). 
In the same way as before, we can classify the MCM $R$-modules appear in ${}^eM_6$ as 
\[
(\rmI)\;\{\;M_5\times 24, M_6\times 16\;\}\quad(\rmI\rmI)\;\{\;R\times 4, M_1\times 8, M_2\times 12, M_3\times 16, M_4\times 20\;\}\quad
(\rmI\rmI\rmI)\;\{\;M_7\times 8, M_8\times 12\;\}
\]
where the class$(\rmI)$ $($resp. $(\rmI\rmI),(\rmI\rmI\rmI))$ is the set of MCM $R$-modules which generate $g_{6,1}$ and $g_{6,2}$ at the same time 
$($resp. either $g_{6,1}$ or $g_{6,2}$, only $g_{6,2}$ $)$ and obtain the upper bound $s(M_6)\le\frac{40}{120}+\frac{20}{120}+\frac{20}{120}=\frac{80}{120}$. 
The following table and Lemma\,\ref{key_lemma} assert the equality. 

 {\footnotesize\[
 \begin{tabular}{c|cccccccccc|c} 
   $M_t$&$R$&$M_1$&$M_2$&$M_3$&$M_4$&$M_4$&$M_5$&$M_6$&$M_7$&$M_8$&Total \\ \hline
   Path&$aBcDeFfG$&$BcDeFfG$&$cDeG$&$DeG$&$eG$&$eFfG$&$G$&$1_{M_6}$&$H$&$fG$& \\ \hline
   $g_{6,1}$&0&0&12&16&12&0&24&16&0&0&80 \\
   $g_{6,2}$&4&8&0&0&0&8&24&16&8&12&80 \\  
   $g_{6,3}$&0&0&0&16&12&8&24&16&0&12&88 \\ 
   $g_{6,4}$&0&0&12&16&12&0&24&16&8&0&88 \\ 
   $g_{6,5}$&0&0&12&16&12&8&24&16&0&12&100 \\ 
   $g_{6,6}$&0&8&0&0&12&8&24&16&8&12&88 \\
   $g_{6,7}$&0&0&12&16&12&8&24&16&0&12&100 \\ 
   $g_{6,8}$&0&0&12&16&12&0&24&16&8&0&88 
 \end{tabular}
 \]}

\bigskip

 \item[] \underline{\textbf{$\cdot$ the case of $M_7$ in $E_8$}\;}

 \medskip

 Since $\rank_RM_7=2$, the multiplicity $d_{t,7}$ of $M_t$ in ${}^eM_7$ is the same as the table (\ref{E8_M1_multi}).
In the same way as before, we can classify the MCM $R$-modules appear in ${}^eM_7$ as 
\[
(\rmI)\;\{\;M_3\times 8, M_4\times 10, M_5\times 12, M_6\times 8, M_7\times 4\;\}\quad(\rmI\rmI)\;\{\;R\times 2, M_1\times 4, M_2\times 6\;\}\quad
(\rmI\rmI\rmI)\;\{\;M_8\times 6\;\}
\]
where the class$(\rmI)$ $($resp. $(\rmI\rmI),(\rmI\rmI\rmI))$ is the set of MCM $R$-modules which generate $g_{7,1}$ and $g_{7,2}$ at the same time 
$($resp. either $g_{7,1}$ or $g_{7,2}$, only $g_{7,2}$ $)$ and obtain the upper bound $s(M_7)\le\frac{42}{120}+\frac{6}{120}+\frac{3}{120}=\frac{51}{120}$. 
The following table and Lemma\,\ref{key_lemma} assert the equality. 

 {\footnotesize\[
 \begin{tabular}{c|cccccccccc|c} 
   $M_t$&$R$&$M_1$&$M_2$&$M_2$&$M_3$&$M_4$&$M_5$&$M_6$&$M_7$&$M_8$&Total \\ \hline
   Path&$aBcDeGh$&$BcDeGh$&$cDeGh$&$cDeF\cdots Gh$&$DeGh$&$eGh$&$Gh$&$h$&$1_{M_7}$&$fGh$& \\ \hline
   $g_{7,1}$&2&4&3&0&8&10&12&8&4&0&51 \\
   $g_{7,2}$&0&0&0&3&8&10&12&8&4&6&51 \\  
   $g_{7,3}$&0&4&3&0&8&10&12&8&4&6&55 \\ 
   $g_{7,4}$&0&0&3&3&8&10&12&8&4&6&54 \\ 
 \end{tabular}
 \]}

\bigskip

 \item[] \underline{\textbf{$\cdot$ the case of $M_8$ in $E_8$}\;}

 \medskip

 Since $\rank_RM_8=3$, the multiplicity $d_{t,8}$ of $M_t$ in ${}^eM_8$ is the same as the table (\ref{E8_M2_multi}).
In the same way as before, we can classify the MCM $R$-modules appear in ${}^eM_8$ as 
\[
(\rmI)\;\{\;M_4\times 15, M_5\times 18, M_8\times 9\;\}\quad(\rmI\rmI)\;\{\;R\times 3, M_1\times 6, M_2\times 9, M_3\times 12\;\}\quad
(\rmI\rmI\rmI)\;\{\;M_6\times 12, M_7\times 6\;\}
\]
where the class$(\rmI)$ $($resp. $(\rmI\rmI),(\rmI\rmI\rmI))$ is the set of MCM $R$-modules which generate $g_{8,1}$ and $g_{8,2}$ at the same time 
$($resp. either $g_{8,1}$ or $g_{8,2}$, only $g_{8,2}$ $)$ and obtain the upper bound $s(M_8)\le\frac{42}{120}+\frac{18}{120}+\frac{6}{120}=\frac{66}{120}$. 
The following table and Lemma\,\ref{key_lemma} assert the equality. 

 {\footnotesize\[
 \begin{tabular}{c|ccccccccc|c} 
   $M_t$&$R$&$M_1$&$M_2$&$M_3$&$M_4$&$M_5$&$M_6$&$M_7$&$M_8$&Total \\ \hline
   Path&$aBcDeF$&$aBcDeFfEeF$&$cDeF$&$DeF$&$eF$&$F$&$gF$&$HgF$&$1_{M_8}$& \\ \hline
   $g_{8,1}$&3&0&9&12&15&18&0&0&9&66 \\
   $g_{8,2}$&0&6&0&0&15&18&12&6&9&66 \\  
   $g_{8,3}$&0&0&9&12&15&18&12&0&9&75 \\ 
   $g_{8,4}$&0&0&9&12&15&18&12&6&9&81 \\ 
   $g_{8,5}$&0&6&0&12&15&18&12&6&9&78 \\ 
   $g_{8,6}$&0&0&9&12&15&18&12&0&9&75 
 \end{tabular}
 \]}

\end{itemize}

%%%%%%%%%%%%%%%%%%%%%%%%%%%%%%%%%%%%%%%%%%%%%%%%%%%%%%%%%%%%%%%
\section{Summary of the value of the dual $F$-signature}
\label{summary}

\begin{thm} 
\label{main_thm}
The following is the Dynkin diagram $Q$ and corresponding values of the dual $F$-signature 
$($In order to show the ratio of dual $F$-signature to the order of $G$ clearly, we don't reduce fractions $)$. 

\begin{itemize}
\item [(1)] Type $A_n$

 \begin{itemize}
   \item [$\bullet$] $n$ is an even number (i.e. $n=2r$)
 {\scriptsize
 \[\xymatrix@C=10pt@R=7pt{
   A_n:&1\ar@{-}[r]&2\ar@{-}[r]&\cdots\ar@{-}[r]&r\ar@{-}[r]&r+1\ar@{-}[r]&\cdots\ar@{-}[r]&n-1\ar@{-}[r]&n \\
   &&&&&&&&\\
   &\displaystyle\frac{2}{n+1}\ar@{-}[r]&\displaystyle\frac{3}{n+1}\ar@{-}[r]&\cdots\ar@{-}[r]&\displaystyle\frac{r+1}{n+1}\ar@{-}[r]&\displaystyle\frac{r+1}{n+1}\ar@{-}[r]&\cdots\ar@{-}[r]&\displaystyle\frac{3}{n+1}\ar@{-}[r]&\displaystyle\frac{2}{n+1}
 }\] }

  \item [$\bullet$] $n$ is an odd number (i.e. $n=2r-1$)
 {\scriptsize 
 \[\xymatrix@C=10pt@R=7pt{
   A_n:&1\ar@{-}[r]&2\ar@{-}[r]&\cdots\ar@{-}[r]&r-1\ar@{-}[r]&r\ar@{-}[r]&r+1\ar@{-}[r]&\cdots\ar@{-}[r]&n-1\ar@{-}[r]&n \\
   &&&&&&&&&\\
   &\displaystyle\frac{2}{n+1}\ar@{-}[r]&\displaystyle\frac{3}{n+1}\ar@{-}[r]&\cdots\ar@{-}[r]&\displaystyle\frac{r}{n+1}\ar@{-}[r]&\displaystyle\frac{2r+1}{2(n+1)}\ar@{-}[r]&\displaystyle\frac{r}{n+1}\ar@{-}[r]&\cdots\ar@{-}[r]&\displaystyle\frac{3}{n+1}\ar@{-}[r]&\displaystyle\frac{2}{n+1}
 }\] }
 \end{itemize}

\item [(2)] Type $D_n$

 \begin{itemize}
   \item [$\bullet$] $n$ is an even number (i.e. $n=2r$)
 {\scriptsize 
 \[\xymatrix@C=7pt@R=5pt{
   &&&&&&&&&&n-1 \\
   D_n:&1\ar@{-}[r]&2\ar@{-}[r]&\cdots\ar@{-}[r]&m\ar@{-}[r]&\cdots\ar@{-}[r]&r\ar@{-}[r]&r+1\ar@{-}[r]&\cdots\ar@{-}[r]&n-2\ar@{-}[ur]\ar@{-}[dr]& \\ 
   &&&&&&&&&&n \\
   &&&&&&&&&& \\
   &&&&&&&&&&\displaystyle\frac{2(n-2)}{4(n-2)} \\
   &\displaystyle\frac{4}{4(n-2)}\ar@{-}[r]&\displaystyle\frac{6}{4(n-2)}\ar@{-}[r]&\cdots\ar@{-}[r]&\displaystyle\frac{4m-2}{4(n-2)}\ar@{-}[r]&\cdots\ar@{-}[r]&
\displaystyle\frac{4r-2}{4(n-2)}\ar@{-}[r]&\displaystyle\frac{4r-1}{4(n-2)}\ar@{-}[r]&\cdots\ar@{-}[r]&\displaystyle\frac{4r-1}{4(n-2)}\ar@{-}[ur]\ar@{-}[dr]& \\ 
   &&&&&&&&&&\displaystyle\frac{2(n-2)}{4(n-2)} \\
 }\] }

  \item [$\bullet$] $n$ is an odd number (i.e. $n=2r-1$)
 {\scriptsize 
 \[\xymatrix@C=7pt@R=5pt{
   &&&&&&&&&&n-1 \\
   D_n:&1\ar@{-}[r]&2\ar@{-}[r]&\cdots\ar@{-}[r]&m\ar@{-}[r]&\cdots\ar@{-}[r]&r-1\ar@{-}[r]&r\ar@{-}[r]&\cdots\ar@{-}[r]&n-2\ar@{-}[ur]\ar@{-}[dr]& \\ 
   &&&&&&&&&&n \\
   &&&&&&&&&& \\
   &&&&&&&&&&\displaystyle\frac{2(n-2)}{4(n-2)} \\
   &\displaystyle\frac{4}{4(n-2)}\ar@{-}[r]&\displaystyle\frac{6}{4(n-2)}\ar@{-}[r]&\cdots\ar@{-}[r]&\displaystyle\frac{4m-2}{4(n-2)}\ar@{-}[r]&\cdots\ar@{-}[r]&
\displaystyle\frac{4r-6}{4(n-2)}\ar@{-}[r]&\displaystyle\frac{4r-3}{4(n-2)}\ar@{-}[r]&\cdots\ar@{-}[r]&\displaystyle\frac{4r-3}{4(n-2)}\ar@{-}[ur]\ar@{-}[dr]& \\ 
   &&&&&&&&&&\displaystyle\frac{2(n-2)}{4(n-2)} \\
 }\] }
 \end{itemize}

\item [(3)] Type $E_6$
 {\scriptsize 
 \[\xymatrix@C=10pt@R=7pt{
   &&&1&& && &&&\displaystyle\frac{6}{24}&& \\
   E_6:&5\ar@{-}[r]&3\ar@{-}[r]&2\ar@{-}[r]\ar@{-}[u]&4\ar@{-}[r]&6 && 
   &\displaystyle\frac{9}{24}\ar@{-}[r]&\displaystyle\frac{16}{24}\ar@{-}[r]&\displaystyle\frac{18}{24}\ar@{-}[r]\ar@{-}[u]&\displaystyle\frac{16}{24}\ar@{-}[r]&\displaystyle\frac{9}{24}
 }\] }

\item [(4)] Type $E_7$
 {\scriptsize 
 \[\xymatrix@C=10pt@R=7pt{
   &&&7&&& && &&&\displaystyle\frac{24}{48}&&& \\
   E_7:&1\ar@{-}[r]&2\ar@{-}[r]&3\ar@{-}[r]\ar@{-}[u]&4\ar@{-}[r]&5\ar@{-}[r]&6 &&    
   &\displaystyle\frac{6}{48}\ar@{-}[r]&\displaystyle\frac{18}{48}\ar@{-}[r]&\displaystyle\frac{38}{48}\ar@{-}[r]\ar@{-}[u]&\displaystyle\frac{36}{48}\ar@{-}[r]&\displaystyle\frac{27}{48}\ar@{-}[r]&\displaystyle\frac{16}{48}
 }\] }

\item [(5)] Type $E_8$
 {\scriptsize 
 \[\xymatrix@C=10pt@R=7pt{
   &&&&&8&& && &&&&&\displaystyle\frac{66}{120}&& \\
   E_8:&1\ar@{-}[r]&2\ar@{-}[r]&3\ar@{-}[r]&4\ar@{-}[r]&5\ar@{-}[r]\ar@{-}[u]&6\ar@{-}[r]&7 &&    
   &\displaystyle\frac{6}{120}\ar@{-}[r]&\displaystyle\frac{18}{120}\ar@{-}[r]&\displaystyle\frac{40}{120}\ar@{-}[r]&\displaystyle\frac{75}{120}\ar@{-}[r]&\displaystyle\frac{102}{120}\ar@{-}[r]\ar@{-}[u]&\displaystyle\frac{80}{120}\ar@{-}[r]&\displaystyle\frac{51}{120}
 }\] }

\end{itemize}
\end{thm}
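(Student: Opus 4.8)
The plan is to deduce Theorem~\ref{main_thm} from the case-by-case analysis of Subsections~\ref{type_An}--\ref{type_E8}. The type $A_n$ is already settled in \cite[Example~3.11]{Nak}, so only $D_n$, $E_6$, $E_7$, $E_8$ are treated here; since the binary groups listed in~(\ref{sub_SL2}) satisfy the hypotheses of Section~\ref{Gene_Fsig}, all the general machinery applies. For a fixed indecomposable MCM module $M_t$, the first step is to fix the asymptotic shape of ${}^eM_t$: by Corollary~\ref{gene-Fsig-cor} one may, up to an error of order $o(p^{2e})$, replace ${}^eM_t$ by $\bigoplus_i M_i^{\oplus d_{i,t}}$ with $d_{i,t}=(\rank_R M_t)(\rank_R M_i)$, and, following Remark~\ref{rem_multi}, suppress the common factor $p^{2e}/|G|$. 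After tensoring with the residue field (Nakayama), computing $b_e(M_t)$ up to $o(p^{2e})$ becomes the combinatorial question of how many disjoint complete systems of minimal generators of $M_t$ can be built from the images of the morphisms $M_i\to M_t$ occurring in ${}^eM_t$.

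The second step is to describe those minimal generators concretely. By Subsection~\ref{count_AR}, a minimal generator of $M_t$ corresponds to a non-zero path from $R$ to $M_t$ in the unrolled Auslander--Reiten quiver $\ZZ\Delta$ of $R$, and the counting algorithm of that subsection produces, for each case, an explicit finite diagram of all such paths together with their multiplicities; this is carried out once for $D_n$ (splitting $n$ even from $n$ odd) and separately for $E_6$, $E_7$, $E_8$. Because every arrow of the quiver raises the standard degree by $1$, the diagram simultaneously orders the minimal generators $g_{t,1},\dots,g_{t,m_t}$ (with $m_t\coloneqq 2\rank_R M_t$) by degree. The third step produces matching upper and lower bounds. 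For the upper bound, a surjection ${}^eM_t\twoheadrightarrow M_t^{\oplus b}$ must realize, in each of its $b$ target copies, the lowest-degree generator $g_{t,1}$; only a restricted sublist of the summands $M_i$ of ${}^eM_t$ carries a path reaching $g_{t,1}$, so summing the corresponding $d_{i,t}$ and dividing by $|G|$ yields the stated fraction as an upper bound for $s(M_t)$. For the lower bound one exhibits, in each case, an explicit assignment table: to each copy of each $M_i$ in the reduced ${}^eM_t$ one attaches a concrete composite of irreducible maps $\varphi_i\colon M_i\to M_t$ read off the quiver, puts $W_i\coloneqq\Image(\varphi_i\pi)\subseteq V\coloneqq M_t/\fkm M_t$, and invokes Lemma~\ref{key_lemma}; the minimum of the column sums of the table equals the claimed value, giving a surjection ${}^eM_t\twoheadrightarrow M_t^{\oplus b}$ with $b$ attaining the upper bound, hence equality.

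The delicate point --- and the reason the answers are not simply $m_t/|G|$ --- is the presence of vertices of multiplicity $\ge 2$ in the unrolled quiver (the vertex ``$3^2$'' in the $D_5$ picture, and its analogues for $D_n$, $E_6$, $E_7$, $E_8$). At such a vertex the incoming irreducible paths are linearly dependent modulo $\fkm$, because the relevant AR relation (e.g. $cC+dD+eE=0$ in~(\ref{relation_D5})) forces the sum of the corresponding minimal generators into $\fkm M_t$; hence a summand of ${}^eM_t$ whose chosen path factors through that vertex can generate only one of the two minimal generators attached there, and certain continuations of such a path vanish outright. Dealing with this forces, in each affected case, the partition of the summands $M_i$ into three classes --- those serving both relevant generators at once, those serving exactly one, and those serving only the higher-degree one --- together with the pairing argument that plays the ``either--or'' class off against the ``only-higher'' class; this is precisely what produces bounds such as $\frac{4m-2}{4(n-2)}$, $\frac{2n-1}{4(n-2)}$, $\frac{38}{48}$ or $\frac{102}{120}$ instead of larger ones. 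Everything else --- confirming that the two bounds coincide, checking that $g_{t,1}$ is genuinely the bottleneck via the degree ordering, and verifying the arithmetic of the multiplicity tables for every rank in every ADE diagram, including the even/odd dichotomy in $D_n$ and the symmetries $M_{n-1}\leftrightarrow M_n$, $M_3\leftrightarrow M_4$, $M_5\leftrightarrow M_6$ --- is routine but lengthy bookkeeping, which is why the text presents $D_5$ in full and only outlines the remaining cases.
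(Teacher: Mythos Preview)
Your proposal is correct and follows essentially the same approach as the paper: the asymptotic decomposition via Corollary~\ref{gene-Fsig-cor} and Remark~\ref{rem_multi}, the counting argument on the unrolled AR quiver to identify minimal generators, the upper bound from the bottleneck generator(s), the lower bound via explicit path-tables plugged into Lemma~\ref{key_lemma}, and the three-class $(\rmI)/(\rmI\rmI)/(\rmI\rmI\rmI)$ pairing argument at multiplicity-$\ge 2$ vertices are exactly what the paper does in Section~\ref{DFsig_RDP}. Your summary even mirrors the paper's expository choice of working $D_5$ in full and sketching the rest, so there is nothing to add.
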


\begin{rem}
As these lists show, we have $s(M_t)=s(M_t^*)$. Indeed, each AR quiver is symmetric with respect to $M_t$ and $M_t^*$, 
and $\rank_RM_t=\rank_RM_t^*$. Thus, it follows from arguments used in Section\,\ref{DFsig_RDP}.
\end{rem}

%%%%%%%%%%%%%%%%%%%%%%%%%%%%%%%%%%%%%%%%%%%%%%%%%%%%%%%%%%%%%%%

\appendix
\section{Hilbert-Kunz multiplicity of quotient surface singularities}

By using arguments similar to those in Section\,\ref{Gene_Fsig} and \ref{tau_cat}, we can investigate the Hilbert-Kunz multiplicity. 
The study of this numerical invariant in positive characteristic was started in \cite{Kun2} and its existence was shown by P.~Monsky \cite{Mon}.

\begin{df-th}[Hilbert-Kunz multiplicity] 
Let $(R,\fkm,k)$ be a Noetherian local ring of characteristic $p>0$ and $I$ be an $\fkm$-primary ideal of $R$. 
Then the limit 
\[
e_{\text{HK}}(I;R)\coloneqq\lim_{e\rightarrow\infty}\frac{1}{p^{ed}}l_R\big(R/I^{[p^e]}\big) 
\]
exists \cite{Mon}, where $l_R(-)$ stands for the length of a finitely generated Artinian $R$-module and $I^{[p^e]}$ is 
the ideal generated by the $p^e$-th powers of the element of $I$. 
We call this limit the Hilbert-Kunz multiplicity of $R$ with respect to $I$. 
In particular, $e_{\text{HK}}(\fkm;R)\coloneqq e_{\text{HK}}(R)$ is called the Hilbert-Kunz multiplicity of $R$.
\end{df-th}

This invariant plays an important role to investigate singularities of positive characteristic. 
For example if $R$ is regular, then $e_{\text{HK}}(R)=1$. The converse holds if $R$ is unmixed \cite{WY1}, \cite{HY}. 
Therefore the properties and its value were observed in many articles $($for more details, see the survey article \cite{Hun} and the references contained therein$)$.  
However, in the spite of its importance, it is difficult to determine the explicit value of $e_{\text{HK}}(R)$ in general. 
For quotient singularities, that is, the case we are interested in this paper, the Hilbert-Kunz multiplicity is determined by the next formula.

\begin{thm}$($cf. \cite[Theorem\,2.7]{WY1}$)$
Let the notation be same as Section\,\ref{Gene_Fsig}. Then 
\[
e_{\text{HK}}(R)=\frac{1}{|G|}l_S\big(S/\fkm S\big).
\]
\end{thm}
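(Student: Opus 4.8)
The plan is to re-use, essentially verbatim, the counting that underlies Section~\ref{Gene_Fsig}, now applied to the Hilbert--Kunz function of the maximal ideal $\fkm$ of $R$. The starting point is the elementary identity
\[
l_R\big(R/\fkm^{[p^e]}\big)=l_R\big({}^eR\otimes_R k\big)\qquad(k=R/\fkm).
\]
Indeed, inside ${}^eR=R$ one has $\fkm\cdot{}^eR=\fkm^{[p^e]}R$, so ${}^eR\otimes_R k={}^eR/\fkm\,{}^eR\cong{}^e\big(R/\fkm^{[p^e]}\big)$ as $R$-modules; and since $k$ is algebraically closed (in particular perfect), the Frobenius twist ${}^e(-)$ is an exact, length-preserving functor on finite length modules, whence the equality of lengths. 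First I would record this, together with the analogous fact $l_R(M_t\otimes_R k)=\mu_R(M_t)$, the minimal number of generators of $M_t$ (Nakayama), with $\mu_R(M_0)=\mu_R(R)=1$.

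Next I would feed the FFRT decomposition~(\ref{iso_p^ed}) into this identity. Tensoring ${}^eR\cong R^{\oplus c_{0,e}}\oplus M_1^{\oplus c_{1,e}}\oplus\cdots\oplus M_n^{\oplus c_{n,e}}$ with $k$ and taking lengths gives
\[
l_R\big(R/\fkm^{[p^e]}\big)=\sum_{t=0}^{n}c_{t,e}\,\mu_R(M_t).
\]
Dividing by $p^{ed}$ and letting $e\to\infty$, each ratio $c_{t,e}/p^{ed}$ converges to the generalized $F$-signature $s(R,M_t)$, which equals $\rank_R M_t/|G|$ by Theorem~\ref{gene-Fsig}. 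Hence
\[
e_{\mathrm{HK}}(R)=\sum_{t=0}^{n}\frac{\rank_R M_t}{|G|}\,\mu_R(M_t)=\frac{1}{|G|}\sum_{t=0}^{n}(\rank_R M_t)\,\mu_R(M_t).
\]

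It then remains to recognize the sum $\sum_t(\rank_R M_t)\mu_R(M_t)$ as $l_S(S/\fkm S)$. For this I would invoke the isotypic (McKay-type) decomposition of $S$ as an $R$-module: since $|G|$ is a unit in $k$, the $V_t$-isotypic component of $S$ is $V_t\otimes_k\Hom_G(V_t,S)$, and reorganizing over $t$ gives $S\cong\bigoplus_{t=0}^{n}M_t^{\oplus \rank_R M_t}$ as $R$-modules (using $\dim_k V_t=\rank_R M_t$; see e.g.\ \cite{Yo}). Applying $-\otimes_R k$ and counting $k$-dimensions yields $\dim_k(S/\fkm S)=\sum_t(\rank_R M_t)\mu_R(M_t)$, and since all residue fields involved are $k$ we have $l_S(S/\fkm S)=\dim_k(S/\fkm S)$. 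Combining with the formula above gives $e_{\mathrm{HK}}(R)=\tfrac1{|G|}l_S(S/\fkm S)$. The genuinely substantive input is Theorem~\ref{gene-Fsig} (convergence of the $c_{t,e}/p^{ed}$ and their values); the remaining work --- the bookkeeping of the twists ${}^e(-)$ and the multiplicities in the isotypic decomposition of $S$ --- is routine but is where care is needed, and is the only real obstacle.
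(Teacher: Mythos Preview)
Your argument is correct. The paper itself does not prove this theorem; it is quoted from \cite{WY1}. Immediately after the statement, however, the paper carries out exactly the two computations you do, only in the opposite order: it first uses the decomposition $S\cong\bigoplus_t M_t^{\oplus d_t}$ (with $d_t=\rank_R M_t$) to rewrite $l_S(S/\fkm S)$ as $\sum_t d_t\,\mu_R(M_t)$, and then remarks that the resulting formula $e_{\mathrm{HK}}(R)=\frac{1}{|G|}\sum_t d_t\,\mu_R(M_t)$ ``is also obtained by the isomorphism~(\ref{iso_p^ed}) and Theorem~\ref{gene-Fsig}'' --- precisely your steps (1)--(3). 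So your proof and the paper's surrounding discussion are the same argument, simply reversed: you derive the intermediate sum from the FFRT side and then identify it with $l_S(S/\fkm S)$, whereas the paper starts from the cited theorem and unwinds $l_S(S/\fkm S)$ down to the same sum.
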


\bigskip

Then we deform it as follows. 
Since $S\cong R^{\oplus d_0}\oplus M_1^{\oplus d_1}\oplus\cdots\oplus M_n^{\oplus d_n}$\;$(d_t=\rank_RM_t=\dim_kV_t)$, 
\[
l_s(S/\fkm S)=\dim_k(S\otimes R/\fkm)=\mu_R(S)=\sum^n_{t=0}d_t\;\mu_R(M_t) ,
\]
where $\mu_R(M)$ stands for the number of minimal generator of a finitely generated $R$-module $M$. Thus, 
\[
e_{\text{HK}}(R)=\frac{1}{|G|}\sum^n_{t=0}d_t\;\mu_R(M_t) .
\]
Note that this formula is also obtained by the isomorphism (\ref{iso_p^ed}) and Theorem\,\ref{gene-Fsig}.
As we showed in subsection\,\ref{count_AR}, we can calculate $\mu_R(M_t)$ by using the AR quiver (or the McKay quiver). 
Thus, we can determine the value of the Hilbert-Kunz multiplicity by a relatively easy process. 

\begin{ex}$($\cite[Theorem\,5.4]{WY1}, see also \cite[Corollary\,20]{HL}, \cite[Corollary\,4.15]{Tuc}$)$
In the same as Section\,\ref{DFsig_RDP}, let $R$ be a two-dimensional rational double point. 
For an indecomposable $R$-module $M_t$, we have $\mu_R(M_t)=2d_t$ for $t\neq 0$ $($see Section\,\ref{DFsig_RDP}$)$ and clearly $\mu_R(R)=d_0=1$. 
Thus, we have 
\[
e_{\text{HK}}(R)=\frac{1}{|G|}(2\sum^n_{t=0}d_t^2-1)=\frac{1}{|G|}(2|G|-1)=2-\frac{1}{|G|}.
\]
\end{ex}

\medskip

\begin{ex}
Let $G\coloneqq \langle\;\sigma=
    \begin{pmatrix} \zeta_8&0 \\
                    0&\zeta_8^5 
    \end{pmatrix}           \;\rangle$ 
be a cyclic group of order $8$ where $\zeta_8$ is a primitive $8$-th root of unity. 
We consider irreducible representations of $G$; 
\[
V_t:\sigma\mapsto\zeta_8^{-t} \quad(t=0,1,\cdots,7). 
\]
In the same way as Section\,\ref{Gene_Fsig}, we set the invariant subring $R\coloneqq S^G$ and 
its indecomposable MCM module $M_t\coloneqq (S\otimes_kV_t)^G$. 
By the counting argument of AR quiver, we obtain the following 
(the meaning of this picture, see Section\,\ref{DFsig_RDP}). 

\[
 \begin{array}{cc}
 \begin{array}{c}
{\scriptsize 
\xymatrix@C=10pt@R=6pt{
0 \\
7\ar[u] \\
6\ar[u] \\
5\ar[u] \\
4\ar[u] \\
3\ar[u] \\
2\ar[u]\ar[r]&7\ar[r]&4 \\
1\ar[u]\ar[r]&\ar[u]6\ar[r]&3\ar[u] \\
0\ar[u]\ar[r]&5\ar[u]\ar[r]&2\ar[u]\ar[r]&7\ar[r]&4\ar[r]&1\ar[r]&6\ar[r]&3\ar[r]&0
} }
 \end{array}&
 \begin{array}{c}
 {\small
 \begin{tabular}{c|cccccccc} 
   $t$&$0$&$1$&$2$&$3$&$4$&$5$&$6$&$7$ \\ \hline
   $\mu(M_t)$&$1$&$2$&$2$&$3$&$3$&$2$&$3$&$3$ \\ \hline
   $\rank M_t$&$1$&$1$&$1$&$1$&$1$&$1$&$1$&$1$ 
 \end{tabular} }
 \end{array} 
 \end{array}
\]
So we have $e_{\text{HK}}(R)=\displaystyle\frac{19}{8}$. 
\end{ex}

\medskip

\begin{ex}
Let the notation be same as Example\,\ref{D_52_group}. 
By the counting argument of AR quiver, we have the number of minimal generators as follows. 
 {\footnotesize\[
 \begin{tabular}{c|cccccccc} 
   $(i,j)$&$(0,0)$&$(1,0)$&$(3,0)$&$(4,0)$&$(2,2)$&$(0,1)$&$(1,1)$&$(3,1)$ \\ \hline
   $\mu(M_{i,j})$&$1$&$3$&$3$&$3$&$4$&$3$&$2$&$2$ \\ \hline
   $\rank M_{i,j}$&$1$&$1$&$1$&$1$&$2$&$1$&$1$&$1$ 
 \end{tabular}\]

 \[\begin{tabular}{c|ccccccc} 
   &$(4,1)$&$(2,0)$&$(0,2)$&$(1,2)$&$(3,2)$&$(4,2)$&$(2,1)$ \\ \hline
   &$2$&$5$&$2$&$3$&$3$&$3$&$6$ \\ \hline
   &$1$&$2$&$1$&$1$&$1$&$1$&$2$ 
 \end{tabular}
 \]}

Thus, we have $e_{\text{HK}}(R)=\displaystyle\frac{60}{24}=\frac{5}{2}$.

\end{ex}

%%%%%%%%%%%%%%%%%%%%%%%%%%%%%%%%%%%%%%%%%%%%%%%%%%%%%%
\subsection*{Acknowledgements}
The author is grateful to Professor Mitsuyasu Hashimoto for giving him valuable advice and encouragements.  
The author also thanks Professor Osamu Iyama for explaining him about counting arguments of Auslander-Reiten quiver 
and thanks Professor Kei-ichi Watanabe, Professor Yuji Yoshino, Professor Yukari Ito and Akiyoshi Sannai for valuable comments and conversations. 

The author is partially supported by Grant-in-Aid for JSPS Fellows $($No. 26-422$)$.

%%%%%---reference---%%%%%

\end{document}